\documentclass[11pt]{article}

\usepackage[T1]{fontenc}
\usepackage{lmodern}
\usepackage[letterpaper,margin=1in]{geometry}
\usepackage{amsmath,amssymb,amsthm,mathtools}
\usepackage{booktabs,array,tabularx,graphicx,microtype,float}
\usepackage{tikz}
\usepackage{enumitem}
\usepackage[hidelinks]{hyperref}
\usepackage[nameinlink,capitalise]{cleveref}

\setlist{leftmargin=1.5em,itemsep=0.15em,topsep=0.25em}
\graphicspath{{./}{figures/}}

\newcommand{\E}{\mathbb E}
\newcommand{\Pp}{\mathbb P}
\newcommand{\Var}{\operatorname{Var}}
\newcommand{\Cov}{\operatorname{Cov}}
\newcommand{\Law}{\operatorname{Law}}
\newcommand{\dTV}{d_{\mathrm{TV}}}

\newcommand{\argmaxx}{\operatorname*{arg\,max}}
\newcommand{\Nplus}{\mathbb N_{+}}
\newcommand{\dd}{\,\mathrm d}

\newtheoremstyle{assumptionstyle}
  {12pt plus 2pt minus 2pt} 
  {12pt plus 2pt minus 2pt} 
  {\normalfont}            
  {}                       
  {\bfseries}              
  {.}                      
  {\newline}               
  {}                       
\theoremstyle{assumptionstyle}
\newtheorem{assumption}{Assumption}
\crefname{assumption}{assumption}{assumptions}
\Crefname{assumption}{Assumption}{Assumptions}
\theoremstyle{plain}

\newtheorem{theorem}{Theorem}
\newtheorem*{informaltheorem}{Theorem}

\newtheorem{lemma}{Lemma}
\newtheorem{corollary}{Corollary}

\theoremstyle{definition}

\theoremstyle{remark}
\newtheorem{remark}{Remark}

\title{Optimal support and condensation in random allocations}
\author{Andrea Ottolini}
\date{}

\begin{document}
\maketitle

\begin{abstract}
How many distinct symbols should a password use?  If its length is fixed at $n$ and an observer learns only which symbols appear, the number of compatible passwords is maximized asymptotically when $k/n\to1/(2\log2)$.  We ask what changes when, in addition to the length, aggregate information about the repetition pattern is revealed.  We model this by fixing a second additive profile $V_k=\sum_i v(J_i)$ at scale $V_k/n\approx\rho$.  For $v(j)=\log(j!)$, the profile records the reduction in the logarithm of the number of compatible words caused by repetitions; we show that once the normalized profile $\rho$ exceeds $0.507834\ldots$, the limiting optimal fraction is pinned at $1/2$.  For a typical multiplicity profile at the optimal support above this threshold, the excess in $V_k$ is carried by a vanishing fraction of used symbols.  We interpret this as a form of non-equivalence of ensembles and extend the mechanism to other profiles and non-uniform allocation models.
\end{abstract}

\noindent\textbf{Keywords.}
Random allocations; canonical and microcanonical ensembles; conditional limit theory; equivalence of ensembles; entropy optimization; condensation.

\section{Introduction}
\label{sec:intro}

Suppose a user enters a password with greasy fingers, leaving oily residue on the buttons that were touched. Such traces can leak information through so-called ``smudge attacks''~\cite{AvivSmudge}. Assume that the adversary knows the password length $n$ and learns from the residue only which buttons were used. If exactly $k$ buttons are marked, there are $k!S(n,k)$ compatible passwords, where $S(n,k)$ is a Stirling number of the second kind. It is then natural to design a password to maximize $k!S(n,k)$. It is standard that a maximizer $k_n$ satisfies $k_n/n\to1/(2\log2)$; see Mez\H{o}~\cite{Mezo} and Temme~\cite{Temme}.

We analyze variants in which, beyond the length, other exchangeable features of the password are assumed to be known. Our main Theorems~\ref{thm:two-canonical} and~\ref{thm:two-supercritical} cover a broad class of additive statistics $V_k=\sum_i v(J_i)$, where $J_1,\ldots,J_k$ are the symbol multiplicities, and also allow non-uniform weights. Under the constraints $T_k=\sum_iJ_i=n$ and $V_k\approx\rho n$, we ask: what is the optimal number of symbols to use, and what does a typical optimal password look like? As an illustration of these general results, consider the uniform model with $v(j)=\log(j!)$. Notice that $\log(n!)-V_k$ is the logarithm of the number of distinct rearrangements of a password. Thus $\rho$ provides a natural measure of how ``simple'' a password is in terms of its symbol multiplicities: at fixed length, larger $\rho$ means fewer distinct rearrangements, reflecting greater repetition. The limiting optimal fraction has the following behavior; see Corollary~\ref{cor:typeclass} for the precise statement.

\begin{informaltheorem}[Informal]
For $\rho\ge0$, let $k_n^*(\rho)$ maximize the number of length-$n$ passwords using exactly $k$ symbols and satisfying $V_k\approx\rho n$.  There is a constant $\rho^*=0.507834\ldots$ and a limiting optimal fraction $\alpha^*(\rho)$ such that $k_n^*(\rho)/n\to\alpha^*(\rho)$.  As $\rho$ increases from $0$ to $\rho^*$, the function $\alpha^*(\rho)$ decreases strictly from $1$ to $1/2$; for every $\rho\ge\rho^*$ it remains pinned at $\alpha^*(\rho)=1/2$.
\end{informaltheorem}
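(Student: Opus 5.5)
The plan is to reduce the password count to a weighted count of compositions, and then read off the limit from the general variational results (Theorems~\ref{thm:two-canonical} and~\ref{thm:two-supercritical}), specialized to $v(j)=\log(j!)$.

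\textbf{Reduction to compositions.} A password of length $n$ using a fixed ordered set of $k$ symbols with multiplicities $(J_1,\dots,J_k)$ can be written in $n!/\prod_i J_i! = n!\,e^{-V_k}$ ways. On the event $V_k\approx\rho n$, the factor $e^{-V_k}$ equals $e^{-\rho n+o(n)}$, uniformly in $k$. The symbol-selection factor is $\binom{N}{k}$ for alphabet size $N$, or $1$ if symbols are labelled; either way I would treat it as part of the uniform weight of the model. Up to $e^{o(n)}$ factors that do not depend on $k$, maximizing the count therefore means maximizing
\[
\#\bigl\{(J_1,\dots,J_k)\in\Nplus^k:\ \textstyle\sum_i J_i=n,\ \sum_i\log J_i!\approx\rho n\bigr\}
=\binom{n-1}{k-1}\,\Pp\bigl(V_k\approx\rho n\mid T_k=n\bigr),
\]
where the $J_i$ are i.i.d.\ geometric on $\Nplus$.

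\textbf{Variational formula.} With $k=\alpha n$, the exponential growth rate should be $\alpha\,h(\alpha,\rho)$. Here $h$ is the maximal entropy of a law $q$ on $\Nplus$ subject to $\E_qJ=1/\alpha$ and $\E_q\log J!\le\rho/\alpha$. The constraint is an inequality rather than an equality because of \emph{condensation}, which I expect to be the content of Theorem~\ref{thm:two-supercritical}. Since $\log m!\sim m\log m$, any excess $\rho n-\alpha n\E_q\log J!$ can be placed on $O(1)$ parts of total size $O(n/\log n)=o(n)$. This costs $o(n)$ in entropy and a vanishing fraction of symbols. Conversely, a tilt $q_{\theta,\lambda}(j)\propto e^{-\theta j-\lambda\log j!}$ with $\lambda<0$ is not normalizable, so the equality constraint cannot be met with exponentially many configurations in any other way. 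When the constraint binds, the maximizer is $q_{\theta,\lambda}$ with $\lambda>0$. When it does not bind, the maximizer is the geometric law with mean $1/\alpha$.

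\textbf{Optimizing over $\alpha$.} Without the $V$-constraint, $\alpha h(\alpha,\infty)$ is the growth rate of $\binom{n}{\alpha n}$. It is maximized uniquely at $\alpha=1/2$, where $q$ is Geometric$(1/2)$ on $\Nplus$. Define
\[
\rho^*=\tfrac12\sum_{j\ge1}2^{-j}\log j!,
\]
which numerically should give $0.507834\ldots$. For $\rho\ge\rho^*$ the unconstrained optimizer is feasible, so $\alpha^*(\rho)=1/2$. For $\rho<\rho^*$ the constraint binds at the optimum. Concavity of $(\alpha,\rho)\mapsto\alpha h$, which follows because entropy maximization under linear constraints is jointly concave in the scaled constraints, gives a unique maximizer $\alpha^*(\rho)$. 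Lagrange conditions then show that $\alpha^*$ is strictly decreasing in $\rho$. At $\rho=0$ we must have $J\equiv1$, so $\alpha^*(0)=1$. Continuity of $\alpha^*$ at $\rho^*$ gives the limit $1/2$.

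\textbf{Main obstacle.} The hardest step is justifying the inequality form of the rate function, i.e.\ showing that condensation is both achievable and optimal. This requires a conditional local limit argument for $T_k$ jointly with the truncated $V_k$, together with an explicit construction that places the excess on a few large parts. I would take both from the supercritical theorem. The remaining work is to verify its hypotheses for $v(j)=\log j!$, namely superlinear growth and finiteness of the geometric exponential moments. I would also check that the $e^{o(n)}$ errors are uniform in $k$, so that $k_n^*/n\to\alpha^*$ follows from uniqueness of the maximizer.
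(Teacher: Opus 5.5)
Your overall route matches the paper's. You reduce to Theorems~\ref{thm:two-canonical} and~\ref{thm:two-supercritical} and identify the boundary law as $\mathrm{Geom}(1/2)$ on $\Nplus$, with $\alpha_c=1/2$ and $\rho_c=\rho^*$. Where the constraint binds, your max-entropy rate $\alpha h(\alpha,\rho)$ is the convex dual of the paper's $F_\rho(\alpha)$, shifted by $\rho$. Your pinning argument ($h(\alpha,\rho)\le h(\alpha,\infty)$, with equality at $\alpha=1/2$ once $\mathrm{Geom}(1/2)$ is feasible, plus a condensate) is exactly the paper's supercritical proof. The upper bound $\mathcal W_{n,k}(\rho)\le e^{|\gamma_c|}\Pp_c(T_k=n)$ simply discards the profile constraint. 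The lower bound puts the excess on one coordinate of size $s_n\asymp n/\log n$. So no truncated joint local limit theorem is needed, and the obstacle you flag is easier than you expect.

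\textbf{The gap} is the strict decrease of $\alpha^*$ on $[0,\rho^*]$, which you attribute to ``Lagrange conditions.'' Joint concavity of $(\alpha,\rho)\mapsto\alpha h(\alpha,\rho)$ makes $\max_\alpha$ concave in $\rho$, but it says nothing about which way the maximizer moves. The first-order conditions only tell you that the optimal law is $q(j)=e^{-\theta j-\lambda\log j!}$ with $\sum_j q(j)=1$. This is the paper's unit-partition curve, with your $\lambda$ equal to $1-\gamma$. The direction of motion is a separate computation along that curve:
\begin{itemize}
\item Differentiating $\sum_j q(j)=1$ gives $d\theta/d\lambda=-\rho$.
\item Hence $\frac{d}{d\lambda}\E J=-\Cov(J,v(J)-\rho J)$, with $v(j)=\log j!$.
\item You then need $\Cov(J,v(J)-\rho J)=\E J\cdot\Cov_{\widehat q}(J,v(J)/J)>0$ under the size-biased law $\widehat q(j)\propto jq(j)$.
\end{itemize}
This is where the strict monotonicity of $\log(j!)/j$ enters. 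For a general profile the sign is not determined: the paper's remark gives $d\alpha^*/d\rho=-\alpha_\gamma\Cov_\gamma(J,v(J)-\rho_\gamma J)/\Var_\gamma(v(J)-\rho_\gamma J)$. So this model-specific input is missing from your sketch. The paper combines exactly this identity with $d\rho_\gamma/d\gamma>0$ from Lemma~\ref{lem:exp-family}.

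\textbf{Two smaller points.} First, to apply Theorem~\ref{thm:two-canonical} at every $\rho\in(0,\rho^*)$ you need the canonical branch $\{\rho_\gamma:\gamma<1\}$ to fill all of $(0,\rho^*)$. The paper gets this from $\rho_\gamma\downarrow0$ as $\gamma\to-\infty$, $\rho_\gamma\to\rho^*$ as $\gamma\uparrow1$, and strict monotonicity of $\rho_\gamma$. In your primal formulation it would require showing that the constrained entropy maximum is attained and admits Lagrange multipliers, which you do not address. Second, the factor $\binom{N}{k}$ cannot be absorbed into the weights. It depends on $k$ (exponentially so when $N$ is comparable to $n$) and would shift the maximizer. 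The statement, as in the introduction where the observer sees which symbols were used, counts passwords over a fixed set of $k$ symbols. Only your labelled reading is the right one.
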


The transition can be understood as follows. For the factorial profile, each multiplicity vector contributes $n!e^{-V_k}$ words. On the constraint window, these weights differ
only by a bounded factor. After removing the common factor $n!e^{-\rho n}$, counting words and counting admissible multiplicity vectors therefore agree at the leading exponential order.
The unrestricted number of positive compositions,
$\binom{n-1}{k-1}$, is maximized at $k/n\to1/2$.  Under a uniform composition at this optimum, the multiplicity of a uniformly chosen used symbol is approximately geometric with parameter $1/2$: if $J$ denotes this limiting multiplicity, then $\Pp(J=r)=2^{-r}$ for $r\ge1$.  Since there are asymptotically $n/2$ used symbols, the resulting value of $V_k/n$ is therefore $\frac12\E\log(J!)$.  Expanding $\log(J!)=\sum_{m=1}^{J}\log m$ gives
\[
   \frac12\E\log(J!)
   =\frac12\sum_{m\ge1}\Pp(J\ge m)\log m
   =\sum_{m\ge1}2^{-m}\log m
   =0.507834\ldots=: \rho^*.
\]
Above $\rho^*$, the excess profile can be carried by a vanishing fraction of symbols without changing the limiting geometric bulk or support fraction. This gives the pinning mechanism.
\begin{figure}[H]
\centering
\includegraphics[width=.88\textwidth]{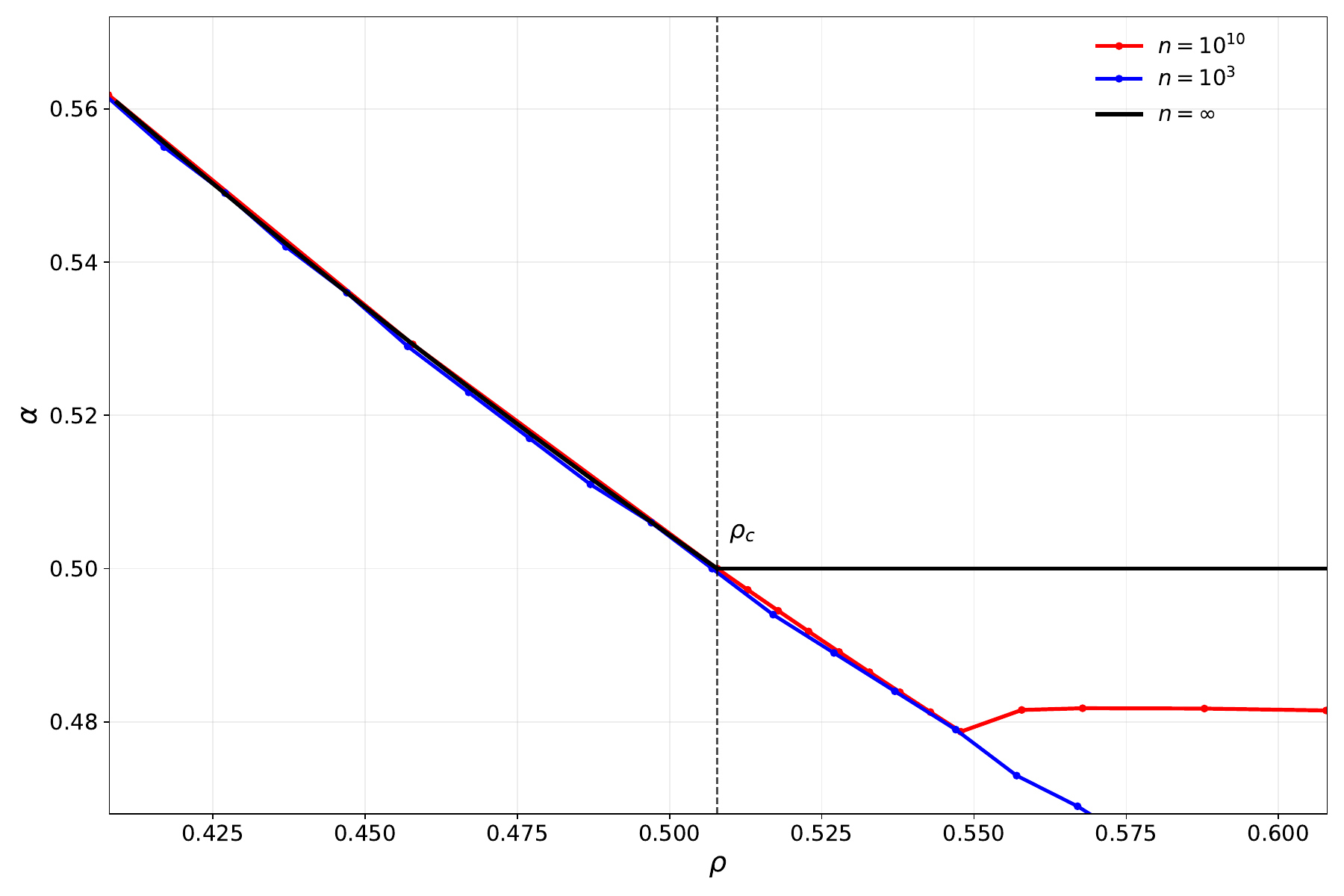}
\caption{The black curve, labeled $n=\infty$, is the limiting optimizer $\alpha^*(\rho)$, and the blue curve shows the optimizer for $n=10^3$. The red curve shows approximate values for $n=10^{10}$, accurate to within $5\times10^{-4}$ at the plotted points. The figure illustrates the slower finite-size approach to the pinned regime.}
\label{fig:certified-bounds}
\end{figure}

To explain the subcritical regime, we compare the microcanonical and canonical ensembles. In the terminology of statistical physics, a \emph{microcanonical ensemble} is supported on configurations satisfying prescribed constraints (here, an exact length and a fixed-width profile window), whereas a \emph{canonical ensemble} matches them in expectation. The phrase \emph{equivalence of ensembles} refers, roughly, to situations in which these descriptions have the same leading-order behavior.
First, revisit the introductory example. Let $J_1,\ldots,J_k$ be independent positive-Poisson variables with law $q_x(j)=x^j/[j!(e^x-1)]$, $j\ge1$. Then
\begin{equation}
   \Pp_x(T_k=n)
   =\frac{x^n}{(e^x-1)^k}\frac{k!S(n,k)}{n!}
   \label{eq:poisson-stirling}
\end{equation}
Indeed, after conditioning on $T_k=n$, the factor $x^{J_1+\cdots+J_k}$ is common to every composition, so the induced profile weight is proportional to $1/(J_1!\cdots J_k!)$.  Up to the common factor $n!$, this is exactly the number of words with the corresponding multiplicities.  Thus the multiplicities of a uniformly chosen password of length $n$ using $k$ symbols have the law of $k$ i.i.d. positive-Poisson variables conditioned on their sum. More precisely, choose $x=\log2$, so that $e^x-1=1$.  In \eqref{eq:poisson-stirling} the factor outside the probability is then independent of $k$, and maximizing $k!S(n,k)$ is exactly the same as maximizing $\Pp_x(T_k=n)$ for one fixed positive-Poisson law.  The latter is maximized asymptotically when the sum is centered, $k\E_xJ\sim n$.  Since $\E_xJ=xe^x/(e^x-1)=2\log2$, this gives $k/n\to1/(2\log2)$. Theorem~\ref{thm:one-stat} extends this argument to non-uniform weights.

The factorial profile $V_k=\sum_i\log(J_i!)$ leads to the law $q_{x,\gamma}(j)=x^j(j!)^{\gamma-1}/Z_\gamma(x)$.  Its normalizing function is $Z_\gamma(x)=\sum_{j\ge1}x^j(j!)^{\gamma-1}$.  This is the positive Conway--Maxwell--Poisson family~\cite{ConwayMaxwell} after the conventional reparametrization $\nu=1-\gamma$.  Its sufficient statistics are $J$ and $\log(J!)$.  If $k/n\to\alpha$, matching the two microcanonical constraints means choosing $(x,\gamma)$ so that a multiplicity of a single used symbol has mean $1/\alpha$ and expected value of $\log(J!)$ equal to $\rho/\alpha$.  Thus the two parameters are used to match, in expectation, precisely the two quantities that are fixed in the microcanonical problem.

The difference between the two ensembles becomes visible at the edge of this family.  At $\gamma=0$ we recover the positive-Poisson model, and increasing $\gamma$ increases the canonical mean of $\log(J!)$.  At $\gamma=1$, after conditioning on the total, every positive composition of $n$ into $k$ parts has the same probability.  The support optimization at this boundary gives $\alpha=1/2$ and $V_k/n=\rho^*$ asymptotically.  For $\rho>\rho^*$, no parameter in this family matches both required means at the limiting optimal support; indeed the normalizing series ceases to be finite once $\gamma>1$.

The canonical description also identifies the multiplicities of typical used symbols.  The following statement is informal; see Corollary~\ref{cor:typeclass} for the precise quantitative formulation.
\begin{informaltheorem}[Informal]
Consider the uniform model with the factorial profile $V_k\approx\rho n$ at its optimizing support.
\begin{itemize}
\item If $0<\rho<\rho^*$, let $(x,\gamma)$ be the canonical parameters matching the two constraints.  For every fixed $r$, the multiplicities of any fixed $r$ used symbols converge in total variation to $q_{x,\gamma}^{\otimes r}$.
\item If $\rho>\rho^*$, then $k/n\to1/2$, and every fixed collection of used symbols is asymptotically described by the boundary canonical law.  The typical bulk accounts for profile rate $\rho^*$, while the excess profile rate $\rho-\rho^*$ is carried by a vanishing fraction of used symbols with atypically large multiplicities.
\end{itemize}
\end{informaltheorem}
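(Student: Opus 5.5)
We have to prove the second informal theorem: typical multiplicities at the optimal support, in the subcritical and supercritical regimes.

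\emph{Reduction.} For the uniform model with $v(j)=\log(j!)$, the law of the multiplicities of a uniformly chosen admissible password is the law of i.i.d.\ variables $J_i\sim q_{x,\gamma}$ conditioned on $\{T_k=n,\ V_k\in[\rho n,\rho n+c)\}$. This holds for every $(x,\gamma)$ with $\gamma\le1$. The reason is that on this event the tilt $x^{T_k}e^{(\gamma-1)V_k}$ is constant up to a bounded factor $e^{O(1)}$, exactly as in \eqref{eq:poisson-stirling}. For an exact lattice window the tilt is constant. I would fix the window so that the conditioning is exact, or else absorb the bounded factor into total-variation estimates.

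\emph{Subcritical case, $0<\rho<\rho^*$.} By the first informal theorem and its precise form, $k/n\to\alpha^*(\rho)\in(1/2,1)$. Choose $(x,\gamma)$ with $\gamma\in(0,1)$ so that $\E J=1/\alpha$ and $\E\log J!=\rho/\alpha$; this is possible by strict monotonicity and steepness of the exponential family in the interior. Under $q_{x,\gamma}$ the pair $(J,\log J!)$ has all exponential moments and a nondegenerate covariance. I would prove a bivariate local limit theorem for $(T_k,V_k)$: exact-lattice in $T$ and windowed in $V$, since $\log J!$ is non-lattice. It would give
\[
\Pp(T_k=n,\ V_k\in W)\asymp k^{-1}
\]
uniformly, together with the same estimate for $(T_{k-r},V_{k-r})$ at shifted targets $n-\sum_{i\le r}j_i$ and $W-\sum_{i\le r}\log j_i!$. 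The ratio of the two local probabilities then tends to $1$ uniformly over $j_1,\dots,j_r\le M$. This gives convergence of the conditional law of $(J_1,\dots,J_r)$ to $q_{x,\gamma}^{\otimes r}$ on compacts, and tightness upgrades it to total variation. This is the standard Gibbs-conditioning or equivalence-of-ensembles step.

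\emph{Supercritical case, $\rho>\rho^*$.} Here $k/n\to1/2$ and we take $\gamma=1$, $x=1/2$: the $J_i$ are i.i.d.\ geometric$(1/2)$ with $\E\log J!=2\rho^*$. Conditioning on $V_k\ge\rho n$ is then a large-deviation event for a variable with only stretched-exponential upper tails, since $\log J!\approx J\log J$ and $J$ has geometric tails. Such heavy-tailed sums exhibit the single- or few-big-jump principle. I would show that, conditionally, there is a set $B$ of used symbols with $|B|=o(n)$ such that
\[
\sum_{i\notin B}J_i=n-o(n),\qquad \sum_{i\notin B}\log J_i!=\rho^*n+o(n).
\]
The argument is a counting comparison. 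Configurations in which the bulk carries profile $\rho^*+\delta$ cost $e^{-cn}$ relative to the canonical weight, by the Cram\'er bound for the truncated variables. Placing the excess $(\rho-\rho^*)n$ into about $n/\log n$ symbols of size $\asymp\log n$, or into a few large ones, costs only $e^{-o(n)}$. This works because $\log J!$ grows superlinearly in $J$, so converting $o(n)$ length into $\Theta(n)$ profile is cheap. Exchangeability then puts any fixed $r$ symbols in the bulk with probability $1-o(1)$, and the bulk is conditioned only on typical events. A local limit theorem for the truncated bulk yields convergence to the geometric$(1/2)^{\otimes r}$ law.

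\emph{Main obstacle.} The main difficulty is the supercritical lower bound. One must exhibit an explicit family of condensate configurations with subexponential cost. One must also show that this family dominates all alternatives, including a bulk shifted toward larger $\gamma$, which cannot happen since $Z_\gamma$ diverges for $\gamma>1$, and bulk deformations at a different $k$. I would first attack this with a truncation at level $L_n=A\log n$ together with an explicit entropy computation, and then optimize over the number of condensate symbols.
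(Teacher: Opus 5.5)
\textbf{Subcritical case.} This half is correct, but it takes a different route from the paper. You run Gibbs conditioning with a sharp mixed lattice--nonlattice local limit theorem (Stone). You show that the ratio of local probabilities at targets shifted by $\sum_{i\le r}(j_i,\log j_i!)$ tends to $1$, and you upgrade to total variation by tightness (e.g.\ $\E_\mu J_1=n/k$). The paper never computes such ratios. Because $Z_\gamma(x_\gamma)=1$, one has exactly
\[
\log\bigl(\mu/q_\gamma^{\otimes k}\bigr)=-\log\bigl(x_\gamma^ne^{\gamma\rho n}W^{(a,v)}_{n,k}(\rho)\bigr)-\gamma(V_k-\rho n).
\]
Optimality of $k_n$ and only the order-$n^{-1}$ lower bound at the centered support then give $D(\mu\|q_\gamma^{\otimes k})\le\log n+O(1)$. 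The exchangeable chain rule with Pinsker gives the rate $(\log n/n)^{1/2}$. Your route needs precise local asymptotics, uniform over parameters re-matched to each $k_n$. The paper's route needs only order-of-magnitude bounds, gives an explicit rate, and carries over verbatim to the supercritical regime.

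Two corrections. First, the matching $\gamma$ is only $<1$, not in $(0,1)$; it is negative for $\rho<0.219341\ldots$, the positive-Poisson value. Second, the window $|V_k-\rho n|\le1$ is fixed by the problem, so the conditioning is never exact. The law $\mu$ is the conditioned product law reweighted by $e^{-\gamma(V_k-\rho n)}$. Cancelling that factor in your ratio uses the within-window uniformity of Stone's theorem, not just the bound $\asymp k^{-1}$.

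\textbf{Supercritical case: pinning and lower bound.} This half has a genuine gap. You assume $k/n\to1/2$ and flag the lower bound as the main obstacle. Both are easy once you normalize at the boundary. Since $Z_1(1/2)=1$, every composition has $q^*$-weight $2^{-n}$, so
\[
\mathcal W_{n,k}(\rho)=2^{-n}e^{\rho n}W^{(a,v)}_{n,k}(\rho)\le e\,\Pp_{q^*}(T_k=n)\quad\text{for \emph{every} }k.
\]
No separate comparison with shifted bulks or other deformations is needed. For the lower bound, take a single symbol of size $s_n\asymp n/\log n$ plus a geometric bulk. This gives $\mathcal W_{n,k_n^0}(\rho)\ge c\,n^{-1}2^{-s_n}$, because the overshoot $\xi_n=O(\log n)$ is $o(\sqrt n)$ and the mixed local limit theorem applies. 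Lemma~\ref{lem:offcenter}, plus a Chernoff bound near the endpoints, then pins $k_n/n$ at $1/2$.

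Your alternative count is also off. As written, $n/\log n$ symbols of size $\asymp\log n$ would use linear length and carry profile of order $n\log\log n$, far more than $(\rho-\rho^*)n$. The consistent count is $\asymp n/(\log n\log\log n)$ such symbols.

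\textbf{Supercritical case: the marginal step.} The real gap is here. After you condition on the random condensate, the bulk's residual length and profile are controlled only to $o(n)$, not to $O(\sqrt n)$. So the claim that ``the bulk is conditioned only on typical events'' is unjustified at the scale a local-limit ratio argument needs. Making it work would require tilted, truncated local limit theorems that are uniform over random targets.

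The missing idea is to avoid conditioning on the condensate at all. The exact identity
\[
\log\bigl(\mu/(q^*)^{\otimes k}\bigr)=-\log\mathcal W_{n,k}(\rho)-(V_k-\rho n),
\]
together with optimality of $k_n$, gives
\[
D(\mu\|(q^*)^{\otimes k})\le-\log\mathcal W_{n,k_n^0}(\rho)+1=O(n/\log n).
\]
Lemma~\ref{lem:entropy-comparison}(i) then gives $\dTV(\mu^{[r]},(q^*)^{\otimes r})=O_r((\log n)^{-1/2})$. Part~(ii) of the same lemma, applied to $\mathbf 1\{J>M\}$ and $\log(J!)\mathbf 1\{J\le M\}$ with a diagonal choice $M_n\to\infty$, gives the localization of the excess. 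Within your own counting framework, the same $e^{-o(n)}$ lower bound would also suffice. Combine it with a Cram\'er bound for each empirical frequency $k^{-1}\#\{i:J_i=j\}$ and with exchangeability; no local limit theorem for the bulk is then needed.
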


Here non-equivalence refers to the failure of canonical moment matching at the optimal support: above $\rho^*$, the boundary law does not reproduce the prescribed profile rate. Equivalence of finite marginals and vanishing relative entropy per symbol persist.

\subsection*{Relation with existing work}
\label{sec:literature}

The support-only problem is classical.  The count $k!S(n,k)$ is an ordered Stirling number, and its asymptotic mode has been studied by Mez\H{o}~\cite{Mezo} and Temme~\cite{Temme}.  More generally, our fixed-support partition function is a partial Bell polynomial: if $w_j=j!a_j$, then $W^{(a)}_{n,k}=k!B_{n,k}(w)/n!$.  Thus the fixed-$k$ measures belong to the weighted-partition framework discussed by Pitman and by Berestycki--Pitman~\cite{PitmanCSP,BerestyckiPitman}.  The profile constraint refines this ensemble by fixing an additional additive statistic of the block sizes.  Models with a varying number of boxes were studied by Bia\l as--Burda--Johnston~\cite{BialasBurdaJohnston}. Compositions also admit a renewal interpretation, with $k$ as the renewal count and $V_k$ as a cumulative reward; see Zamparo~\cite{ZamparoRenewal,ZamparoModels}. Here we optimize $k$ under an exact length constraint and a fixed-width constraint on a superlinear profile, and obtain quantitative estimates for the optimizer and conditional marginals.

The positive-Poisson conditioning identity in \eqref{eq:poisson-stirling} is standard in the probabilistic treatment of surjections and occupancy counts~\cite{KolchinEtAl,CarayolRotondo}.  More broadly, equivalence of ensembles may be formulated at the level of entropy or at the level of finite marginals~\cite{LewisPfisterSullivan1994,LewisPfisterSullivan,Touchette2015,VanCampenhoutCover,Csiszar1984}.  The latter is the comparison used here: below criticality the microcanonical law of finitely many symbols approaches the corresponding canonical product law.
The one-statistic marginal approximation is already part of finite de Finetti theory.  Diaconis--Freedman~\cite{DiaconisFreedmanExp,DiaconisFreedmanDozen} derive total-variation estimates for product laws conditioned on sufficient statistics.  In the notation of Theorem~\ref{thm:one-stat}, this compares the first $r$ coordinates of $k$ multiplicities conditioned on $T_k=n$ with $q_x^{\otimes r}$, where $\E_xJ=n/k$.  Under their regularity hypotheses, the error is of order $r/k$ when $r,k\to\infty$ and $r/k\to0$.  In the supercritical regime, convergence of finite marginals persists but does not capture the excess profile carried by atypically large multiplicities.

Our results are closely related to condensation and localization in microcanonical ensembles with several unbounded constraints~\cite{Chatterjee,Nam2020}. One motivation for this theory comes from statistical approaches to the soliton resolution conjecture for the focusing nonlinear Schr\"odinger equation, where mass and energy provide two conserved quantities. Chatterjee~\cite{ChatterjeeSoliton} developed such an approach, and Nam~\cite[Section~1.1]{Nam2020} explicitly used this setting to motivate a general study of ensembles under multiple constraints. In our allocation models, the limiting bulk likewise fails to account for the full prescribed value of an unbounded observable. We study how this mechanism selects the optimal support and causes its limiting fraction to remain pinned above criticality.

Chatterjee's simplex--sphere model~\cite{Chatterjee} gives a closely related example in which typical coordinates remain asymptotically exponential while one coordinate becomes much larger than the others. Related localization results appear for weighted allocations and zero-range models~\cite{GrosskinskySchutzSpohn,ArmendarizLoulakisHeavy,JansonAllocations,SzavitsPRL,SzavitsJPA}. Under our general assumptions we prove localization on a vanishing fraction of symbols.
Related questions of ensemble equivalence arise for random graphs and random permutations~\cite{SquartiniEtAl,DenHollanderDense,KenyonKralRadinWinkler,MukherjeePermExp,BorgaDasMukherjeeWinkler}.  Finally, the equations $A(x)=1$ and $Z_\gamma(x)=1$ also have a direct generating-function interpretation.  For fixed $k$, one has $\sum_n W^{(a)}_{n,k}x^n=A(x)^k$; after the exponential tilt $e^{\gamma V_k}$, the analogous generating function is $Z_\gamma(x)^k$.  Summing over all possible support sizes therefore gives the geometric series $(1-A(x))^{-1}$ or $(1-Z_\gamma(x))^{-1}$, so the optimal canonical points are the corresponding poles~\cite{BenderEnumeration,PemantleWilsonMelczer}.

\subsection*{Structure of the paper}
\Cref{sec:setup} formulates the general weighted-allocation problem and states the main results, both for the one-statistic model and for an additional profile constraint.  \Cref{sec:examples} develops the principal examples, including the uniform factorial-profile model, the Conway--Maxwell--Poisson and negative-binomial families, and power profiles.  \Cref{sec:lemmas} collects the main estimates and technical tools used throughout, including the canonical variational identities, entropy comparison bounds, and the required one- and two-dimensional local limit estimates, in particular the mixed lattice--nonlattice local central limit theorem.  Finally, \Cref{sec:proofs} combines these ingredients to prove the main canonical, critical, and supercritical results.

\section{Set-up and main results}
\label{sec:setup}

\subsection{One sufficient statistic}
\label{sec:one-stat}

Let $(a_j)_{j\ge1}$ be positive weights and write $A(x)=\sum_{j\ge1}a_jx^j$, which may be infinite.  For $k$ positive parts with total $n$, define the one-statistic microcanonical partition function
\begin{equation}
   W^{(a)}_{n,k}
   =\sum_{\substack{j_1+\cdots+j_k=n\\j_i\ge1}}
      \prod_{i=1}^k a_{j_i}.
   \label{eq:one-W}
\end{equation}
The uniform case corresponds to $a_j=1/j!$.  A multiplicity vector $(j_1,\ldots,j_k)$ then corresponds to $n!\prod_i a_{j_i}$ words.  Thus summing the weights in \eqref{eq:one-W} differs from counting words only by the common factor $n!$, and the maximizing support is the same.  The weighted formulation simply replaces $1/j!$ by general weights $a_j$.

For $x>0$ such that $A(x)<\infty$, let $q_x(j)=a_jx^j/A(x)$.  For fixed $k$, the canonical ensemble is the product measure $q_x^{\otimes k}$, and the exact conditioning identity is
\begin{equation}
   W^{(a)}_{n,k}
   =x^{-n}A(x)^k\Pp_x(T_k=n).
   \label{eq:one-identity}
\end{equation}

Each one-parameter family is considered on its full domain of finite normalization.  It is called \emph{regular} when its natural parameter domain is open~\cite[Section~3]{EckGeyer}.  Here the natural parameter is $\log x$, so this is equivalent to openness of $\{x>0:A(x)<\infty\}$.

\begin{assumption}
\label{ass:one-stat}
The weights $a_j$ are positive, and the one-parameter family $(q_x)_x$ is nonempty and regular.
\end{assumption}

For positive coefficients, this is equivalent to requiring a positive radius of convergence and divergence of the series at that radius whenever it is finite.  Lemma~\ref{lem:one-partition-geometry} gives the corresponding properties of $A$ and of the canonical family.

\begin{theorem}
\label{thm:one-stat}
Under Assumption~\ref{ass:one-stat}, there is a unique $x_*>0$ with $A(x_*)=1$.  The maximizing support fraction is unique and equals $\alpha_*=1/\E_{x_*}J$.  If $k_n$ maximizes $W^{(a)}_{n,k}$, then
\begin{equation}
   \left|\frac{k_n}{n}-\alpha_*\right|=O(n^{-1/2}).
   \label{eq:one-alpha-rate}
\end{equation}
The implicit constant depends only on the one-site law at $x_*$.
\end{theorem}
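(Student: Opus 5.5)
The plan is to reduce everything to a single fixed canonical law via the conditioning identity \eqref{eq:one-identity}. First I establish $x_*$. By Lemma~\ref{lem:one-partition-geometry} and Assumption~\ref{ass:one-stat}, $A$ is finite on an open interval $(0,R)$, continuous and strictly increasing there, with $A(0^+)=0$ and $A(x)\uparrow\infty$ as $x\uparrow R$; regularity guarantees this divergence when $R<\infty$. So there is a unique $x_*\in(0,R)$ with $A(x_*)=1$. Since $x_*$ is interior, $J$ under $q_{x_*}$ has exponential moments. Write $\mu=\E_{x_*}J$ and $\sigma^2=\Var_{x_*}J$. Because every $a_j>0$, the law $q_{x_*}$ charges all positive integers, so $\sigma^2>0$ and the lattice span is $1$. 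Setting $x=x_*$ in \eqref{eq:one-identity} gives
\[
  W^{(a)}_{n,k}=x_*^{-n}\,\Pp_{x_*}(T_k=n),
\]
and the prefactor does not depend on $k$. Hence maximizing $W^{(a)}_{n,k}$ over $k$ is exactly maximizing $p_n(k):=\Pp_{x_*}(T_k=n)$ for one fixed i.i.d.\ law, and the target value is $\alpha_*=1/\mu$.

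\emph{Lower bound at the centre.} Take $k_0=\lfloor n/\mu\rfloor$. The lattice local CLT gives $p_n(k_0)\ge c\,n^{-1/2}$, with $c$ depending only on $q_{x_*}$.

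\emph{Upper bounds away from the centre.} I split into three ranges of $k$.
\begin{itemize}
\item \emph{Small $k$, $k\le\varepsilon n$.} A Chernoff bound in the tilt gives
\[
  p_n(k)\le e^{-tn}A(x_*e^t)^k .
\]
I fix $t>0$ with $x_*e^t<R$, so that $A(x_*e^t)=e^{\delta}$. This yields $p_n(k)\le e^{-(t-\varepsilon\delta)n}$, which is exponentially small for $\varepsilon$ small.
\item \emph{Large $k$ with $n/k$ near $1$.} Here $p_n(k)\le q_{x_*}(1)^{k}$ up to trivial cases. More generally, when $|n/k-\mu|\ge\eta$ with $k\ge\varepsilon n$, Cramér's bound gives $p_n(k)\le\exp(-k\,I(n/k))\le e^{-c\varepsilon n}$. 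The rate function $I$ is strictly positive away from $\mu$ and finite or infinite at the endpoints.
\item \emph{Window $|n/k-\mu|<\eta$.} I tilt exactly. Choose $x=x(n/k)$ with $\E_xJ=n/k$; this is possible for $\eta$ small since the mean map is a smooth increasing bijection near $x_*$. Then
\[
  p_n(k)=(x_*/x)^nA(x)^k\,\Pp_x(T_k=n)=e^{-kI(n/k)}\,\Pp_x(T_k=n).
\]
A local CLT that is uniform over $x$ in a compact neighbourhood of $x_*$ gives $\Pp_x(T_k=n)\le C k^{-1/2}$. Near $\mu$ one has $I(y)\ge c'(y-\mu)^2$.
\end{itemize}

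Combining these bounds, any maximizer $k_n$ must satisfy
\[
  C k_n^{-1/2}e^{-c'k_n(n/k_n-\mu)^2}\ge c\,n^{-1/2}.
\]
Since $k_n\asymp n$, this forces $|n/k_n-\mu|=O(n^{-1/2})$, hence $|k_n/n-\alpha_*|=O(n^{-1/2})$. All constants come from $q_{x_*}$ and a fixed neighbourhood of it, which is the claimed dependence. Uniqueness of the limiting fraction follows immediately. In addition, for every other $\alpha\ne\alpha_*$ we have $\frac1n\log p_n(\alpha n)\to-\alpha I(1/\alpha)<0$, so $\alpha_*$ is the unique exponential-order maximizer.

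The main obstacle is the uniform local limit upper bound $\sup_{x}\Pp_x(T_k=n)\le Ck^{-1/2}$ over tilts near $x_*$. I would prove it by Fourier inversion. The characteristic functions $\phi_x(\theta)=\E_xe^{i\theta J}$ vary continuously in $x$. Aperiodicity gives $\sup_x\sup_{\delta\le|\theta|\le\pi}|\phi_x(\theta)|<1$, and near $\theta=0$ a uniform quadratic bound $|\phi_x(\theta)|\le e^{-c\theta^2}$ follows from the variances being bounded below. Integrating these bounds over $[-\pi,\pi]$ gives the $O(k^{-1/2})$ bound.
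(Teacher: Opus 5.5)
Your proposal is correct and takes essentially the same route as the paper. Both proofs fix the canonical law at $x_*$, so that $W^{(a)}_{n,k}=x_*^{-n}\Pp_{x_*}(T_k=n)$. Both then compare the local CLT lower bound $c\,n^{-1/2}$ at the centred support with Chernoff bounds outside a compact interior range, and with an exact-tilting plus uniform local CLT bound of the form $C n^{-1/2}e^{-cn(k/n-\alpha_*)^2}$ inside it. The paper packages that interior bound as Lemma~\ref{lem:offcenter}, whereas you spell out the uniform Fourier-inversion argument that the paper only cites.
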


\subsection{Profile constraints}
\label{sec:two-stat}

The first constraint is the length $T_k=\sum_iJ_i$.  The second is an additive quantity $V_k=\sum_{i=1}^k v(J_i)$ built from the multiplicity profile; for short, we call $V_k$ the \emph{profile}.  We constrain $V_k$ near $\rho n$ and call $\rho=V_k/n$ the \emph{profile rate}.  The normalization is by the fixed length $n$, while the support size $k$ is optimized.  If $k/n\to\alpha$, then the corresponding one-site average is $V_k/k\to\rho/\alpha$.  Throughout the profile-constrained problem we assume $\rho>0$ and use the fixed window
\begin{equation}
  W^{(a,v)}_{n,k}(\rho)
  =\sum_{\substack{j_1+\cdots+j_k=n\\
     |\sum_i v(j_i)-\rho n|\le 1}}
     \prod_{i=1}^k a_{j_i}.
  \label{eq:two-W}
\end{equation}
Whenever $W^{(a,v)}_{n,k}(\rho)>0$, let $\mu^\rho_{n,k}$ be the exchangeable microcanonical law with weights proportional to $\prod_{i=1}^k a_{j_i}$ on the constraint set in \eqref{eq:two-W}, and let $k_n(\rho)$ be any maximizing support.  For $1\le r\le k$, let $\mu^{\rho,[r]}_{n,k}$ denote the joint law under $\mu^\rho_{n,k}$ of the multiplicities of its first $r$ symbols.  For the uniform model with $a_j=1/j!$ and $v(j)=\log(j!)$, this is exactly the statistic $V_k$ introduced in the opening example; equivalently, $\log(n!)-V_k$ is the logarithm of the number of rearrangements.  A profile in this constraint set contributes $n!\prod_i a_{j_i}$ words; the common factor $n!$ does not affect the optimizing support.

The canonical two-parameter one-site family is $q_{x,\gamma}(j)=a_jx^je^{\gamma v(j)}/Z_\gamma(x)$, where $Z_\gamma(x)=\sum_{j\ge1}a_jx^je^{\gamma v(j)}$.  For fixed $k$, the canonical ensemble is $q_{x,\gamma}^{\otimes k}$, while \eqref{eq:two-W} is its microcanonical counterpart.

\begin{assumption}
\label{ass:profile}
The function $v$ is increasing and satisfies
\[
   \frac{v(j)}j\longrightarrow\infty,
   \qquad
   \frac{\Delta v(j)}{v(j)}\longrightarrow0,
   \qquad
   \Delta^2v(j)\longrightarrow0.
\]
The limit $\gamma_c=\lim_{j\to\infty}[-\log a_j]/v(j)$ exists and is finite.  With $\gamma=\gamma_c$ fixed, the one-parameter family $(q_{x,\gamma_c})_x$ is nonempty and regular.
\end{assumption}

Here $\Delta v(j)=v(j+1)-v(j)$ and $\Delta^2v(j)=\Delta v(j+1)-\Delta v(j)$.  Regularity in Assumption~\ref{ass:profile} concerns variation in $x$ with $\gamma_c$ fixed, not the full two-parameter family.  Lemma~\ref{lem:partition-geometry} gives the existence of $x_\gamma$ for every $\gamma\le\gamma_c$.  The fixed-window local limit estimate is given in Lemma~\ref{lem:two-lclt}.

\begin{remark}
The second-difference condition is a sufficient criterion for the nonlattice property used in Lemma~\ref{lem:two-lclt}.  It can be replaced by that property directly.  Profiles with genuine lattice structure, such as $v(j)=j^2$, can also be treated by the corresponding lattice local limit theorem. The present results are restricted to the nonlattice-profile case.
\end{remark}

For every $\gamma\le\gamma_c$, let $x_\gamma$ be the unique solution of $Z_\gamma(x_\gamma)=1$ and let $q_\gamma(j)=a_jx_\gamma^je^{\gamma v(j)}$.  Set
\begin{equation}
  \alpha_\gamma=\frac1{\E_\gamma J},
  \qquad
  \rho_\gamma=\frac{\E_\gamma v(J)}{\E_\gamma J}.
  \label{eq:alpharho}
\end{equation}
At the boundary we abbreviate $x_c=x_{\gamma_c}$, $\alpha_c=\alpha_{\gamma_c}$, $\rho_c=\rho_{\gamma_c}$, and $q_c=q_{\gamma_c}$.  Lemma~\ref{lem:exp-family} shows that $\rho_\gamma$ is strictly increasing for $\gamma<\gamma_c$, while Lemma~\ref{lem:partition-geometry} gives $(\alpha_\gamma,\rho_\gamma)\to(\alpha_c,\rho_c)$ as $\gamma\uparrow\gamma_c$.

Throughout this subsection the model $(a,v)$ is fixed.  Constants may depend on it; their subscripts record any additional dependence.  We use $\dTV(\mu,\pi)=\frac12\sum_z|\mu(z)-\pi(z)|$ for total variation distance.

\begin{theorem}
\label{thm:two-canonical}
Under Assumption~\ref{ass:profile}, let $\gamma\le\gamma_c$ be such that $\rho=\rho_\gamma>0$.
\begin{enumerate}[label=(\roman*)]
\item If $\gamma<\gamma_c$, then
\[
 \left|\frac{k_n(\rho)}{n}-\alpha_\gamma\right|=O_\gamma(n^{-1/2}).
\]
\item At $\gamma=\gamma_c$,
\[
 \left|\frac{k_n(\rho_c)}{n}-\alpha_c\right|
 =O\!\left(\left(\frac{\log n}{n}\right)^{1/2}\right).
\]
\item For every $\gamma\le\gamma_c$ and every fixed $r$,
\[
 \dTV\!\left(\mu^{\rho,[r]}_{n,k_n(\rho)},q_\gamma^{\otimes r}\right)
 =O_{\gamma,r}\!\left(\left(\frac{\log n}{n}\right)^{1/2}\right).
\]
\end{enumerate}
\end{theorem}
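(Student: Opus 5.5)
The approach is to extend the proof of Theorem~\ref{thm:one-stat} to two statistics, using the exact tilting identity. For any $x>0$ and $\gamma\le\gamma_c$ with $Z_\gamma(x)<\infty$, every configuration in the window of \eqref{eq:two-W} has weight $\prod_i a_{j_i}=x^{-n}e^{-\gamma V_k}Z_\gamma(x)^k\prod_i q_{x,\gamma}(j_i)$. Since $|V_k-\rho n|\le1$, this gives
\[
W^{(a,v)}_{n,k}(\rho)=x^{-n}e^{-\gamma\rho n+O(|\gamma|)}Z_\gamma(x)^k\,\Pp_{x,\gamma}\bigl(T_k=n,\ |V_k-\rho n|\le1\bigr).
\]
We fix the tilt at $(x_\gamma,\gamma)$, so that $Z_\gamma=1$. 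The prefactor $x_\gamma^{-n}e^{-\gamma\rho n}$ is then independent of $k$. Maximizing $W$ over $k$ therefore reduces, up to a bounded factor, to maximizing $P_k:=\Pp_\gamma(T_k=n,|V_k-\rho n|\le1)$ under the fixed product law $q_\gamma^{\otimes k}$. This is the same reduction as in the one-statistic case, with the pole $Z_\gamma(x_\gamma)=1$ playing the role of $A(x_*)=1$.

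\textbf{Part (i).} Here $\gamma<\gamma_c$, so $q_\gamma$ has exponential moments of $J$ and of $v(J)$. We apply the mixed lattice--nonlattice local limit theorem (Lemma~\ref{lem:two-lclt}) to $(T_k,V_k)$. For $k=\alpha n$ it gives
\[
P_k\asymp n^{-1}\exp\bigl(-c\,n\,|(\alpha,\rho)\text{-mismatch}|^2\bigr)
\]
near the center, together with a large-deviation bound (Cramér/Chernoff) away from it. The key observation is that $(n/k,\rho n/k)=(\E_\gamma J,\E_\gamma v(J))$ holds exactly when $k/n=\alpha_\gamma$, because $\rho=\rho_\gamma$. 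Hence the Gaussian exponent is of order $(k-\alpha_\gamma n)^2/n$. Comparing with $k_0=\lfloor\alpha_\gamma n\rceil$, where $P_{k_0}\gtrsim n^{-1}$, any maximizer must satisfy $(k-\alpha_\gamma n)^2/n=O(1)$, which gives the $O(n^{-1/2})$ rate. The uniformity needed to make this rigorous comes from the nondegeneracy of the covariance of $(J,v(J))$ and from the local CLT error terms.

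\textbf{Part (ii).} At $\gamma=\gamma_c$ the law $q_c$ no longer has exponential moments of $v(J)$; only $J$ keeps exponential tails. So we tilt only in $x$ and still use $(x_c,\gamma_c)$. The lower bound $P_{k_0}\gtrsim n^{-1}$ still follows from the local CLT, provided $v(J)$ has enough moments under $q_c$ (the polynomial-type tails inherited from $-\log a_j\sim\gamma_c v(j)$). For the upper bound we handle $T_k$ and $V_k$ separately. The deviation $|T_k-k\E J|$ is still controlled by an exponential Chernoff bound in $x$ alone. The extra freedom in $V_k$ can be absorbed by a few large parts, but this costs only a subexponential factor: $\Pp_c(J=j)\approx e^{-o(v(j))}$ at $\gamma_c$, and the regular variation conditions on $v$ enter here. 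The resulting comparison has the form
\[
P_k\le n^{C}\exp\bigl(-c\,(k-\alpha_c n)^2/n\bigr),
\]
and comparing with $P_{k_0}\gtrsim n^{-1}$ forces $|k-\alpha_c n|\lesssim\sqrt{n\log n}$.

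\textbf{Part (iii).} Conditioning the microcanonical law on its first $r$ coordinates gives the exact formula
\[
\mu^{[r]}(j_1,\dots,j_r)=q_\gamma^{\otimes r}(j)\,\frac{P^{(k-r)}\bigl(n-\sum j_i,\ \rho n-\sum v(j_i)\bigr)}{P_k}.
\]
By the local limit estimates, the ratio equals $1+O(\text{shift}/\sqrt n)$ uniformly over $j$ with $|j|\le C\log n$. The centering error coming from $k_n/n-\alpha_\gamma$ contributes $O(\sqrt{\log n/n})$ by part (i) or (ii). The tail $\{\max j_i>C\log n\}$ has $q_\gamma$-mass $O(n^{-1})$ from the exponential tail of $J$, and the same bound holds for its $\mu$-mass by the ratio bound.

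The main obstacle is part (ii), together with the uniformity of the two-dimensional local limit theorem at $\gamma_c$, where $v(J)$ is heavy-tailed relative to its tilt. I would first try to prove a one-big-jump upper bound for $V_k$ under $q_c$, and combine it with the exponential concentration of $T_k$.
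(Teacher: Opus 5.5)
\textbf{Part (iii) has a genuine gap.} Your conditioning formula reduces the total-variation bound to the claim
\[
P^{(k-r)}\bigl(n-\textstyle\sum_i j_i,\ \rho n-\sum_i v(j_i)\bigr)/P_k=1+O(\mathrm{shift}/\sqrt n).
\]
This is a local limit theorem for $(T_k,V_k)$, in a window of fixed width in the nonlattice coordinate, with relative error of order $n^{-1/2}$. Nothing available gives this.
\begin{itemize}
\item Lemma~\ref{lem:two-lclt} only provides two-sided bounds of order $1/n$, so the ratio is merely bounded above and below.
\item Stone's theorem gives the Gaussian main term with additive error $o(1/n)$, but no rate.
\item No rate should be expected under Assumption~\ref{ass:profile}. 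The variable $v(J)$ is discrete, so $t\mapsto\E e^{itv(J)}$ is almost periodic and comes arbitrarily close to modulus one at large frequencies (Cram\'er's condition fails). The error at unit scale is then governed by Diophantine properties of the values $v(j)$, which the assumption does not control.
\end{itemize}
Carried out honestly, your argument yields $\dTV\to0$, not $O_{\gamma,r}((\log n/n)^{1/2})$.

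The formula is also not exact for $\gamma\neq0$. On the window, $\mu$ is $q_\gamma^{\otimes k}$ reweighted by $e^{-\gamma(V_k-\rho n)}$. The ratio therefore also requires the within-window law of $V_k-\rho n$ to be nearly the same for two different targets, which is the same fine-scale problem.

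\textbf{The missing idea is to replace local ratios by relative entropy,} which needs only the lower bound you already have. Since $Z_\gamma(x_\gamma)=1$, on the constraint set
\[
\log\bigl(\mu(\mathbf j)/q_\gamma^{\otimes k}(\mathbf j)\bigr)=-\log\bigl(x_\gamma^ne^{\gamma\rho n}W^{(a,v)}_{n,k}(\rho)\bigr)-\gamma(V_k-\rho n).
\]
Optimality of $k_n(\rho)$, together with Lemma~\ref{lem:two-lclt} at the centered support, bounds the normalized partition function below by $c_\gamma/n$. Hence $D(\mu\|q_\gamma^{\otimes k})\le\log n+O_\gamma(1)$. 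Exchangeability makes the chain-rule increments nondecreasing, so $D(\mu^{[r]}\|q_\gamma^{\otimes r})\le\frac rkD(\mu\|q_\gamma^{\otimes k})$. Pinsker then gives $\dTV^2\le\frac r{2k}(\log n+C_\gamma)$; this is Lemma~\ref{lem:entropy-comparison}(i). The $\sqrt{\log n/n}$ is exactly the cost of the $n^{-1}$ local probability. No rate, no centering of $k_n$, and no truncation are needed, only $k\asymp n$.

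\textbf{Parts (i)--(ii) are essentially fine,} with two comments.
\begin{itemize}
\item In (i), comparing all supports under the single law $q_\gamma$ is a legitimate alternative to the paper's route via matched parameters, $F_\rho$, and strict concavity (Lemma~\ref{lem:exp-family}). However, Lemma~\ref{lem:two-lclt} covers only $O(1)$ mismatch. For the Gaussian upper bound with an $n^{-1}$ prefactor you need Stone's theorem uniformly in the target. Contradiction with $P_{k_n}\ge c/n$ then gives $|k_n-\alpha_\gamma n|=O(\sqrt n)$. That argument uses only second moments of $(J,v(J))$, so the lack of exponential moments of $v(J)$ at $\gamma_c$ is not the obstacle you identify.
\item In (ii), no one-big-jump estimate is needed. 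Drop the profile constraint and bound the normalized weight by $e^{|\gamma_c|}\Pp_c(T_k=n)$. Lemma~\ref{lem:offcenter} gives $Cn^{-1/2}e^{-cn(k/n-\alpha_c)^2}$, and comparing this with $c/n$ yields the stated rate. This is the paper's argument.
\end{itemize}
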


For $\rho>\rho_c$, Assumption~\ref{ass:profile} implies that a single symbol of sublinear multiplicity can carry a linear excess in $V_k$.  Let $s_n=s_n(\rho)$ be the smallest $m$ such that
$v(m)-\rho_cm\ge(\rho-\rho_c)n$, and let $\xi_n$ be the overshoot,
$\xi_n=v(s_n)-\rho_cs_n-(\rho-\rho_c)n$.  Then $s_n=o(n)$, $\xi_n=o(n)$, and $-\log q_c(s_n)=o(n)$; these consequences of Assumption~\ref{ass:profile} are checked in the proof of Theorem~\ref{thm:two-supercritical}.  Set
\[
 \omega_n(\rho)=\left(\frac{-\log q_c(s_n)+\log n}{n}\right)^{1/2}+\frac{\xi_n}{n},
\]
so that $\omega_n(\rho)\to0$; for fixed $\rho$ we write simply $\omega_n$.

\begin{theorem}
\label{thm:two-supercritical}
Under Assumption~\ref{ass:profile}, fix $\rho>\rho_c$ and let $k=k_n(\rho)$.
\begin{enumerate}[label=(\roman*)]
\item The optimal fraction is pinned at the boundary value:
\[
 \alpha^*(\rho)=\alpha_c,
 \qquad
 \left|\frac{k_n(\rho)}n-\alpha_c\right|=O_\rho(\omega_n).
\]
\item For every fixed $r$,
\[
 \dTV\!\left(\mu^{\rho,[r]}_{n,k_n(\rho)},q_c^{\otimes r}\right)=O_{\rho,r}(\omega_n).
\]
\item There exists a deterministic sequence $M_n\to\infty$ such that, with
$ I_n=\{i:J_i>M_n\}$,
\[
 \E\frac{|I_n|}{k}=o(1),
 \qquad
 \E\left|\frac1n\sum_{i\in I_n}v(J_i)-(\rho-\rho_c)\right|=o(1).
\]
\end{enumerate}
\end{theorem}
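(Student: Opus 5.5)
The plan is to sandwich $W^{(a,v)}_{n,k}(\rho)$ between a constructive lower bound and an entropy upper bound, both expressed through the boundary canonical identity. The starting point is the tilted version of \eqref{eq:one-identity} at $(x_c,\gamma_c)$, where $Z_{\gamma_c}(x_c)=1$:
\[
 W^{(a,v)}_{n,k}(\rho)=x_c^{-n}e^{-\gamma_c\rho n+O(1)}\,\Pp_c\bigl(T_k=n,\ |V_k-\rho n|\le1\bigr).
\]
On the window the tilt $e^{\gamma_c V_k}$ is constant up to a bounded factor. The common prefactor does not depend on $k$, so optimizing $k$ reduces to maximizing the probability $P_{n,k}:=\Pp_c(T_k=n,|V_k-\rho n|\le1)$ under the product law $q_c^{\otimes k}$.

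\textbf{Lower bound (one big jump).} Take $k$ with $|k-\alpha_c n|\le C$. Force the last coordinate to equal $s_n$. Require the remaining $k-1$ coordinates to satisfy $T_{k-1}=n-s_n$ and $V_{k-1}\in v$-window around $\rho n-v(s_n)=\rho_c(n-s_n)-\xi_n$.

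With $k-1\approx \alpha_c(n-s_n)+\alpha_c s_n$, the mean of $T_{k-1}$ is off by $O(s_n)$. To fix this, choose $k'=\alpha_c(n-s_n)$ symbols in the bulk, and put the remaining $\approx\alpha_c s_n$ symbols at value $1$, which costs $q_c(1)^{O(s_n)}=e^{-O(s_n)}$. Alternatively, absorb this into a Gaussian cost.

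The bulk is then centered up to $O(\xi_n)$ in the $V$ coordinate. The two-dimensional local limit estimate, Lemma~\ref{lem:two-lclt}, applied at $q_c$ (which has finite variance of $v(J)$ by regularity), gives a bulk probability of at least $n^{-1}e^{-O(\xi_n^2/n)}$. Combined with the big-jump factor $q_c(s_n)$, this yields
\[
\log P_{n,k}\ge \log q_c(s_n)-O(\log n+s_n+\xi_n).
\]
Since $s_n=o(n)$, $\xi_n=o(n)$ and $-\log q_c(s_n)=o(n)$ (all to be verified from $v(j)/j\to\infty$ and $\Delta v/v\to0$, which make $s_n$ sublinear with small overshoot), this lower bound is $-o(n)$.

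\textbf{Upper bound and pinning.} For general $k=\alpha n$, bound $P_{n,k}\le\Pp_c(T_k=n)$. By the one-statistic theory, Theorem~\ref{thm:one-stat} applied to the weights $a_jx_c^je^{\gamma_c v(j)}$ (whose root is $x=1$), this is at most $\exp(-c\,n(\alpha-\alpha_c)^2)$ up to polynomial factors. Comparing with the lower bound gives
\[
(\alpha-\alpha_c)^2\lesssim \frac{-\log q_c(s_n)+\log n}{n}+\frac{s_n+\xi_n}{n}.
\]
The term $s_n/n$ has to be controlled by $-\log q_c(s_n)/n$, because $q_c$ has tail exponentially smaller than $x_c^{s_n}$ on the boundary family. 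This yields (i) with rate $\omega_n$.

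\textbf{Marginals and condensation.} For (ii), use the entropy comparison lemmas. The relative entropy of $\mu^\rho_{n,k}$ with respect to $q_c^{\otimes k}$ equals $-\log P_{n,k}$, which is $O(n\omega_n^2)$. By exchangeability and the subadditivity argument (Csisz\'ar), the $r$-marginal entropy is at most $(r/k)\cdot O(n\omega_n^2)=O_r(\omega_n^2)$. Pinsker's inequality then gives $\dTV=O_r(\omega_n)$.

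For (iii), choose $M_n\to\infty$ slowly so that $\E_c[v(J);J\le M_n]\to\E_cv(J)$. The marginal convergence gives $\E|I_n|/k\le \Pp_{\mu}(J_1>M_n)\to0$. Uniform integrability of $v(J_1)\mathbf 1_{J_1\le M_n}$ gives that the bulk contributes $\rho_c n(1+o(1))$ in $L^1$. Since $V_k=\rho n+O(1)$, the remainder on $I_n$ is $(\rho-\rho_c)n+o(n)$ in $L^1$.

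\textbf{Main obstacle.} The hardest step is the lower bound's two-dimensional local estimate with a shifted target, uniform in the $O(\xi_n)$ offset. This also involves rebalancing the length after removing the $s_n$ block while keeping the cost within $O(n\omega_n^2)$. I would first try to tilt the bulk slightly in $(x,\gamma)$ toward $\gamma<\gamma_c$ so that the target becomes exactly centered, and then pay the relative-entropy cost of that tilt, which is quadratic in $\xi_n/n$.
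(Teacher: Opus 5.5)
Your overall architecture is the paper's: the boundary identity at $(x_c,\gamma_c)$, a one-big-jump lower bound, a Gaussian upper bound for $\Pp_c(T_k=n)$, entropy plus Pinsker, and truncation. However, the lower-bound construction as you wrote it does not work.

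\textbf{The comparison support in the lower bound.} You insist that the comparison support satisfy $|k-\alpha_cn|\le C$. After removing the coordinate $s_n$, this leaves a bulk whose two targets are displaced from their $q_c$-means by order $s_n$. In the main examples $s_n\gg\sqrt n$ (for the factorial profile $s_n\asymp n/\log n$). So Lemma~\ref{lem:two-lclt}, which allows only bounded displacements, does not apply, and without tilting no local-limit lower bound is available at that scale. Neither of your two repairs is sound.
\begin{itemize}
\item Filler symbols equal to $1$: you need $m\asymp s_n$ of them. Once the bulk length is centered, the bulk profile target exceeds its mean by $m(\rho_c-v(1))-\xi_n$. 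This is of order $s_n$ and positive whenever $\rho_c>v(1)$ (for instance $v(1)=\log 1!=0$). So the bulk is not ``centered up to $O(\xi_n)$''; it is pushed back to the supercritical side.
\item Recentering the $k-1$ bulk coordinates by a canonical tilt: to first order this needs a change $\Delta\gamma$ with the sign of $s_n\Cov_c(J,v(J)-\rho_cJ)-\xi_n\Var_c(J)$. Since $\Delta^2v\to0$ forces $\Delta v(j)=o(j)$, you have $\xi_n\le\Delta v(s_n-1)+O(1)=o(s_n)+O(1)$. Hence $\Delta\gamma>0$ whenever the boundary covariance is positive, as in the factorial case. That tilt would require $\gamma>\gamma_c$, where $Z_\gamma\equiv\infty$.
\end{itemize}
The missing observation is that the comparison support need not be near $\alpha_cn$ at all. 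Take $k_n^0=\ell_n+1$ with $\ell_n=\alpha_{\gamma_n}(n-s_n)+O(1)$, where $\rho_{\gamma_n}=(\rho n-v(s_n))/(n-s_n)=\rho_c-\xi_n/(n-s_n)$. Then:
\begin{itemize}
\item the length is exactly centered;
\item the only residual displacement is the downward $\xi_n$ in the profile, which lies on the feasible side $\gamma_n\le\gamma_c$;
\item the change of measure from $q_c$ to $q_{\gamma_n}$ costs $R_n\ge -C\xi_n^2/n$, by $\dd\lambda_\gamma/\dd\gamma=-\rho_\gamma$ and the two-sided bound on $\dd\rho_\gamma/\dd\gamma$ near $\gamma_c$.
\end{itemize}
This is exactly the tilt you sketch at the end, and it also removes your $e^{-O(s_n)}$ factor. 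If you kept the exponent $s_n+\xi_n$, you would still need two facts to land on $\omega_n$: $s_n\le C(1-\log q_c(s_n))$, which follows from the exponential moment of $q_c$, and $\xi_n=O(s_n)+O(1)$. You only addressed the first.

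\textbf{Concentration in part (iii).} ``Uniform integrability'' does not give the $L^1$ statement. Under $\mu$, $v(J_1)$ is not uniformly integrable: its mean is about $\rho/\alpha_c>\rho_c/\alpha_c$, and that excess is precisely the condensate. Moreover, information about the law of $J_1$ controls only the mean of $k^{-1}\sum_if_M(J_i)$, where $f_M(j)=v(j)\mathbf 1_{\{j\le M\}}$, not its fluctuations. You need a concentration bound for this empirical average under $\mu$. The paper obtains it from Lemma~\ref{lem:entropy-comparison}(ii): with $D(\mu\|q_c^{\otimes k})\le Cn\omega_n^2$ this gives an $L^1$ error of order $L_M\omega_n$, where $L_M$ is the range of $f_M$, and $M_n$ is then chosen by a diagonal argument. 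A variance bound using the two-coordinate marginal from (ii) would also work. This is the real reason $M_n$ must grow slowly, namely $L_{M_n}\omega_n\to0$. The convergence $\E_c[v(J);J\le M_n]\to\E_cv(J)$ that you cite holds for every $M_n\to\infty$.

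\textbf{Minor points.}
\begin{itemize}
\item $D(\mu\|q_c^{\otimes k})=-\log P_{n,k}+O(1)$ rather than equality. Relative to $q_c^{\otimes k}$ conditioned on the window, $\mu$ carries the factor $e^{-\gamma_cV_k}$.
\item The Gaussian upper bound comes from Lemma~\ref{lem:offcenter}, together with a Chernoff bound for $k/n$ near $0$ or $1$. It is not contained in the statement of Theorem~\ref{thm:one-stat}.
\end{itemize}
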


Thus the excess profile is carried by a vanishing fraction of symbols.

\begin{remark}
The pinning transition has a kink in the optimal support fraction whenever
\[
 \Cov_c\!\bigl(J,v(J)-\rho_cJ\bigr)\ne0.
\]
Indeed, along the canonical branch,
\[
 \frac{\dd\alpha^*}{\dd\rho}
 =
 -\alpha_\gamma\,
 \frac{\Cov_\gamma(J,v(J)-\rho_\gamma J)}
      {\Var_\gamma(v(J)-\rho_\gamma J)}.
\]
Thus the left derivative at $\rho_c$ is given by the same expression under the boundary law $q_c$, while Theorem~\ref{thm:two-supercritical} gives $\alpha^*(\rho)=\alpha_c$ for $\rho>\rho_c$, so the right derivative is zero.  Hence a nonzero boundary covariance gives a discontinuity of the first derivative.

For the factorial profile $v(j)=\log(j!)$, the boundary law is $q_c(j)=2^{-j}$ and the covariance is positive.  In this case
\[
 (\alpha^*)'(\rho^*-)=-0.554771\ldots,
 \qquad
 (\alpha^*)'(\rho^*+)=0.
\]
\end{remark}

\begin{remark}
If instead $-\log a_j/v(j)\to\infty$, while $v$ satisfies the first two conditions in Assumption~\ref{ass:profile}, then $Z_\gamma$ is entire for every finite $\gamma$.  The unit-partition curve therefore has no finite canonical endpoint; Lemma~\ref{lem:no-finite-boundary} shows that $\rho_\gamma\to\infty$, so the canonical branch extends to arbitrarily large profile rates and the pinning mechanism of \Cref{thm:two-supercritical} has no finite transition point.
\label{rem:infinite-gamma}
\end{remark}

\section{Examples}
\label{sec:examples}

The examples begin with the one-statistic negative-binomial family and its positive-Poisson limit, followed by models with profile constraints.

\subsection{One sufficient statistic}

\paragraph{Negative-binomial weights.}
Take $a_j^{(\theta)}=(\theta)_j/j!$ with $\theta>0$.

\begin{corollary}
\label{cor:nb-one-stat}
The canonical law is the zero-truncated negative-binomial distribution, and the unique limiting optimal fraction is
\[
 \alpha_\theta^*=\frac{1}{2\theta(2^{1/\theta}-1)}.
\]
If $k_n$ maximizes the one-statistic partition function, then $|k_n/n-\alpha_\theta^*|=O_\theta(n^{-1/2})$.
The corresponding microcanonical law is the symmetric Dirichlet--multinomial law conditioned on full occupancy.
\end{corollary}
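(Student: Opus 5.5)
This is a direct specialization of Theorem~\ref{thm:one-stat}, so the plan is to check Assumption~\ref{ass:one-stat}, solve $A(x_*)=1$ explicitly, compute $\E_{x_*}J$, and then identify the microcanonical law.

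\emph{Generating function and regularity.} With $a_j=(\theta)_j/j!$, the binomial series gives
\[
A(x)=\sum_{j\ge1}\frac{(\theta)_j}{j!}x^j=(1-x)^{-\theta}-1 .
\]
This is finite exactly on $0<x<1$, and it diverges to $+\infty$ as $x\uparrow1$ because $\theta>0$. The domain $\{x>0:A(x)<\infty\}=(0,1)$ is therefore open and nonempty. All weights are positive, since $(\theta)_j>0$. Hence Assumption~\ref{ass:one-stat} holds.

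The one-site law is
\[
q_x(j)=\frac{(\theta)_j x^j/j!}{(1-x)^{-\theta}-1},\qquad j\ge1 .
\]
This is the negative-binomial law $\mathrm{NB}(\theta,x)$, with probabilities $(\theta)_j x^j(1-x)^\theta/j!$ for $j\ge0$, conditioned on $j\ge1$. So the canonical law is the zero-truncated negative binomial.

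\emph{Solving for $x_*$ and $\alpha_\theta^*$.} The equation $A(x_*)=1$ reads $(1-x_*)^{-\theta}=2$, so $x_*=1-2^{-1/\theta}$. The untruncated mean is $\theta x/(1-x)$, and truncation divides it by $1-(1-x)^\theta$. Hence
\[
\E_xJ=\frac{\theta x(1-x)^{-1}}{1-(1-x)^\theta}.
\]
At $x_*$ we have $(1-x_*)^\theta=\tfrac12$ and $x_*/(1-x_*)=2^{1/\theta}-1$. This gives $\E_{x_*}J=2\theta(2^{1/\theta}-1)$, and therefore
\[
\alpha_\theta^*=\frac{1}{2\theta\,(2^{1/\theta}-1)} .
\]
Uniqueness of the maximizing fraction and the bound $|k_n/n-\alpha_\theta^*|=O_\theta(n^{-1/2})$ then follow directly from Theorem~\ref{thm:one-stat}. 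The constant depends only on $q_{x_*}$, hence only on $\theta$.

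\emph{Identification of the microcanonical law.} The microcanonical law on positive compositions $(j_1,\dots,j_k)$ of $n$ is proportional to $\prod_i(\theta)_{j_i}/j_i!$. The symmetric Dirichlet--multinomial law with $k$ categories, parameter $\theta$, and $n$ draws assigns to $(j_1,\dots,j_k)$ the probability
\[
\frac{n!}{(k\theta)_n}\prod_{i=1}^k\frac{(\theta)_{j_i}}{j_i!}.
\]
Its prefactor does not depend on the $j_i$. Conditioning it on full occupancy (all $j_i\ge1$) therefore yields the same normalized weights as the microcanonical law.

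This is a sanity check rather than an obstacle. The only points needing care are the divergence at the radius $x=1$, which gives regularity, and the truncated-mean computation. Everything else is an application of Theorem~\ref{thm:one-stat}.
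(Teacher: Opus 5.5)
Your proposal is correct and follows essentially the same route as the paper: you verify regularity from $A(x)=(1-x)^{-\theta}-1$, solve $x_*=1-2^{-1/\theta}$, apply Theorem~\ref{thm:one-stat}, and identify the microcanonical weights $\prod_i(\theta)_{j_i}/j_i!$ with the Dirichlet--multinomial law restricted to full occupancy. The only cosmetic differences are that you get $\E_{x_*}J$ from the zero-truncated mean formula rather than by differentiating $A_\theta$ (the two agree because $A(x_*)=1$), and that you write out the Dirichlet--multinomial probability mass function explicitly.
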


\begin{proof}
We first determine the optimal fraction.  Here $A_\theta(x)=(1-x)^{-\theta}-1$ is finite exactly for $0<x<1$, so the family is nonempty and regular.  The equation $A_\theta(x_*)=1$ gives $x_*=1-2^{-1/\theta}$.  Differentiating $A_\theta$ at $x_*$ gives $\E_{x_*}J=2\theta(2^{1/\theta}-1)$.  Theorem~\ref{thm:one-stat} therefore gives the formula for $\alpha_\theta^*$ and the $O(n^{-1/2})$ rate.

To identify the microcanonical law, condition the canonical product law on $J_1+\cdots+J_k=n$.  This removes the common factor $x^n$ and leaves weights proportional to $\prod_i(\theta)_{j_i}/j_i!$.  These are exactly the symmetric Dirichlet--multinomial weights, conditioned on every symbol appearing.
\end{proof}

\begin{remark}
The positive-Poisson model is recovered as $\theta\to\infty$.  If $x=\lambda/\theta$, the zero-truncated negative-binomial law converges to the positive-Poisson law with parameter $\lambda$, while $\theta x_*\to\log2$ and $\alpha_\theta^*\to1/(2\log2)$.  At the microcanonical level, $(\theta)_j\sim\theta^j$ for each fixed $j$.  After conditioning on $\sum_iJ_i=n$, the common factor $\theta^n$ disappears and the weights converge to $\prod_i1/j_i!$.  Thus the uniform allocation model from the introduction is the Poisson limit of this one-statistic family.
\end{remark}

\subsection{Profile constraints}

\paragraph{The factorial profile and the Conway--Maxwell--Poisson family.}
Take $a_j=1/j!$ and $v(j)=\log(j!)$.  Let $\alpha^*(\rho)$ denote the limiting optimal support fraction, and put
\[
 q^*(j)=2^{-j},\qquad j\ge1,
 \qquad
 \rho^*=0.507834\ldots.
\]

\begin{corollary}
\label{cor:typeclass}
For the uniform model with the factorial profile constraint, the critical canonical law is $q^*$, the critical profile rate is $\rho^*$, and the critical support fraction is $1/2$. One has $\alpha^*(0)=1$, and $\alpha^*$ decreases strictly on $[0,\rho^*]$. Then:
\begin{enumerate}[label=(\roman*)]
\item for each $\rho\in(0,\rho^*)$ there is a unique $\gamma<1$ with $\rho_\gamma=\rho$, and
\[
 \alpha^*(\rho)=\alpha_\gamma.
\]
Moreover, for every fixed $r$,
\[
 \left|\frac{k_n(\rho)}n-\alpha_\gamma\right|=O_\rho(n^{-1/2}),
 \qquad
 \dTV(\mu^{\rho,[r]}_{n,k_n(\rho)},q_\gamma^{\otimes r})
 =O_{\rho,r}\!\left((\log n/n)^{1/2}\right);
\]
\item at $\rho=\rho^*$ one has $\alpha^*(\rho^*)=1/2$ and $q_c=q^*$, and both the support error and every fixed-$r$ total-variation error are $O_r((\log n/n)^{1/2})$;
\item for every $\rho>\rho^*$ one has $\alpha^*(\rho)=1/2$, and
\[
 \left|\frac{k_n(\rho)}n-\frac12\right|
 +\dTV(\mu^{\rho,[r]}_{n,k_n(\rho)},(q^*)^{\otimes r})
 =O_{\rho,r}((\log n)^{-1/2}),
\]
while a vanishing fraction of symbols carries the excess $(\rho-\rho^*)n$ in $V_k$.
\end{enumerate}
\end{corollary}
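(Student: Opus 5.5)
The plan is to verify Assumption~\ref{ass:profile} for $a_j=1/j!$, $v(j)=\log(j!)$, identify the boundary quantities, and then read off (i)--(iii) from Theorems~\ref{thm:two-canonical} and~\ref{thm:two-supercritical}.

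\emph{Assumption~\ref{ass:profile}.} Clearly $v$ is increasing. The three growth conditions hold because $v(j)/j\sim\log j\to\infty$, $\Delta v(j)=\log(j+1)=o(j\log j)$, and $\Delta^2v(j)=\log\frac{j+2}{j+1}\to0$. Since $-\log a_j=v(j)$, we get $\gamma_c=1$.

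\emph{Boundary quantities.} At $\gamma=1$ we have $q_{x,1}(j)\propto x^j$. Hence $Z_1(x)=x/(1-x)$ is finite exactly on $(0,1)$, which is open, so the family is regular. Solving $Z_1(x_c)=1$ gives $x_c=1/2$, so $q_c=q^*$. Then $\E_cJ=2$, so $\alpha_c=1/2$. Also $\rho_c=\frac12\E\log J!=\sum_m2^{-m}\log m=\rho^*$, computed as in the introduction.

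\emph{Left endpoint and monotonicity.} At $\gamma=0$ we have $q_{x,0}$ positive-Poisson with $x_0=\log2$, which gives $\rho_0=\E\log J!/\E J>0$. This is inconsistent with $\alpha^*(0)=1$, so $\rho\to0$ must come from $\gamma\to-\infty$. For $\gamma\to-\infty$, $e^{(\gamma-1)\log j!}$ suppresses all $j\ge2$. Thus $q_\gamma\Rightarrow\delta_1$, so $\alpha_\gamma\to1$ and $\rho_\gamma\to0$; I would make this a small dominated-convergence check on $x_\gamma$.

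By Lemma~\ref{lem:exp-family}, $\rho_\gamma$ is strictly increasing on $(-\infty,1)$, and by Lemma~\ref{lem:partition-geometry} it is continuous up to $\rho_1=\rho^*$. So for $\rho\in(0,\rho^*)$ there is a unique $\gamma<1$ with $\rho_\gamma=\rho$. Theorem~\ref{thm:two-canonical}(i),(iii) then gives (i). Theorem~\ref{thm:two-canonical}(ii),(iii) at $\gamma=\gamma_c$ gives (ii).

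Strict decrease of $\alpha^*=\alpha_{\gamma(\rho)}$ needs $\alpha_\gamma$ strictly decreasing in $\gamma$. I would use the derivative formula in the remark following Theorem~\ref{thm:two-supercritical}, $\dd\alpha/\dd\rho=-\alpha\Cov_\gamma(J,v(J)-\rho_\gamma J)/\Var_\gamma(\cdot)$. Since $\Cov(J,\rho_\gamma J)$ enters, the sign is not immediate. I would instead differentiate along the curve $Z_\gamma(x_\gamma)=1$, giving $x_\gamma'/x_\gamma=-\E_\gamma v(J)/\E_\gamma J$. Then $\frac{d}{d\gamma}\E_\gamma J=\Cov(J,v)-\rho_\gamma\Var(J)=\Cov(J,v-\rho_\gamma J)$. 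The task is to show this covariance is positive, which I would get from an FKG/Chebyshev-type argument. The function $v(j)-\rho_\gamma j$ is convex, and the relevant covariance can be written via the tilted family. This step is the main obstacle. If the general sign argument fails, I would fall back on a direct computation using log-concavity of $q_\gamma$ in $j$.

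\emph{Supercritical rate.} Part (iii) follows from Theorem~\ref{thm:two-supercritical}, once we bound $\omega_n$. Here $-\log q^*(s_n)=s_n\log2$, and $s_n$ solves $\log s_n!-\rho^*s_n\approx(\rho-\rho^*)n$, so $s_n\asymp n/\log n$. The overshoot satisfies $\xi_n\le\Delta v(s_n)\le\log(s_n+1)=O(\log n)$. Hence
\[
\omega_n\asymp\Bigl(\frac{n/\log n+\log n}{n}\Bigr)^{1/2}+\frac{\log n}{n}=O((\log n)^{-1/2}).
\]
The vanishing-fraction statement is exactly Theorem~\ref{thm:two-supercritical}(iii).
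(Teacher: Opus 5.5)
Everything except the strict decrease of $\alpha^*$ matches the paper's proof: checking Assumption~\ref{ass:profile} with $\gamma_c=1$, the boundary computation $x_c=1/2$, $q_c=q^*$, $\alpha_c=1/2$, $\rho_c=\rho^*$, identifying the branch via $q_\gamma\Rightarrow\delta_1$ as $\gamma\to-\infty$ plus monotonicity of $\rho_\gamma$, applying Theorems~\ref{thm:two-canonical} and~\ref{thm:two-supercritical}, and bounding $\omega_n$ through $s_n\asymp n/\log n$, $\xi_n=O(\log n)$, $-\log q^*(s_n)=(\log2)s_n$. The gap is in the strict decrease. You correctly reduce it to $\Cov_\gamma(J,v(J)-\rho_\gamma J)>0$, but you call this the main obstacle and leave it open. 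Your hint does not close it: convexity of $h_\gamma(j)=v(j)-\rho_\gamma j$ together with $\E_\gamma h_\gamma(J)=0$ does not fix the sign of $\Cov_\gamma(J,h_\gamma(J))$. For example, for $h(j)=(j-\mu)^2-\sigma^2$ this covariance is the third central moment, which can be negative.

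What is needed is monotonicity, not convexity. The paper size-biases: with $\widehat q_\gamma(j)=\alpha_\gamma j\,q_\gamma(j)$ one has $\rho_\gamma=\E_{\widehat q_\gamma}[v(J)/J]$, hence
\[
\Cov_\gamma\bigl(J,v(J)-\rho_\gamma J\bigr)=\E_\gamma\!\left[J^2\Bigl(\frac{v(J)}J-\rho_\gamma\Bigr)\right]=\frac1{\alpha_\gamma}\Cov_{\widehat q_\gamma}\!\left(J,\frac{v(J)}J\right)>0 .
\]
Positivity follows from Chebyshev's association inequality, because $v(j)/j=\log(j!)/j$ is strictly increasing. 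This is exactly what convexity gives once you use $v(0)=0$.

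In this particular model you could also argue directly: for $\gamma<1$, $\rho_\gamma<\rho^*<\log2\le\log(j+1)=\Delta v(j)$. So $h_\gamma$ is itself strictly increasing on $\Nplus$, and Chebyshev applies to $(J,h_\gamma(J))$. That route is shorter but relies on the numerical fact $\rho^*<\log 2$. The size-biased form works for any profile with $v(j)/j$ increasing.

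A smaller omission: you use $\alpha^*(0)=1$ without proving it, and the profile theorems assume $\rho>0$. Showing $\alpha_\gamma\to1$ as $\gamma\to-\infty$ only gives $\lim_{\rho\downarrow0}\alpha^*(\rho)=1$. The paper's direct argument is $\log(j!)\ge(j-1)\log2$. On the constraint set this gives $(\log2)(n-k)\le V_k\le1$, so $n-k=O(1)$. Together with $\alpha_\gamma<1$ (since $q_\gamma$ charges $j\ge2$), this gives strictness at the left endpoint.
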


\begin{proof}
For every $j\ge2$, $(-\log a_j)/v(j)=1$, so the canonical endpoint is $\gamma_c=1$.  At this boundary,
\[
 Z_1(x)=\sum_{j\ge1}x^j=\frac{x}{1-x},
\]
and the unit-partition equation $Z_1(x_c)=1$ gives $x_c=1/2$.  Therefore
\[
 q_c(j)=x_c^j=2^{-j}=q^*(j),
 \qquad
 \E_cJ=2,
 \qquad
 \alpha_c=\frac1{\E_cJ}=\frac12.
\]
Moreover,
\[
 \rho_c=\frac{\E_c\log(J!)}{\E_cJ}
 =\frac12\E_c\log(J!)
 =\rho^*,
\]
where the last quantity is the constant computed in the introduction.

As $\gamma\to-\infty$ along the unit-partition curve, the canonical law concentrates at $1$.  Hence $\rho_\gamma\downarrow0$ and $\alpha_\gamma\uparrow1$.  Together with the strict monotonicity of $\rho_\gamma$ from Lemma~\ref{lem:exp-family}, this identifies the canonical branch as $\rho\in(0,\rho^*)$.  Theorem~\ref{thm:two-canonical} gives (i)--(ii).

To verify strict decrease, let $\widehat q_\gamma(j)=\alpha_\gamma j q_\gamma(j)$ be the size-biased law. Differentiation along the unit-partition curve gives
\[
 \frac{d\alpha_\gamma}{d\gamma}
 =-\alpha_\gamma^2\Cov_\gamma\!\bigl(J,v(J)-\rho_\gamma J\bigr)
 =-\alpha_\gamma\Cov_{\widehat q_\gamma}\!\left(J,\frac{v(J)}J\right)<0,
\]
since $v(j)/j=\log(j!)/j$ is strictly increasing. Together with $d\rho_\gamma/d\gamma>0$ and continuity at the endpoints, this proves the claimed monotonicity. At $\rho=0$, the constraint gives $(\log2)(n-k)\le V_k\le1$, so $\alpha^*(0)=1$.

For (iii), the exceptional multiplicity $s_n$ is determined by $\log(s_n!)-\rho^*s_n\asymp_\rho n$.  Since $\log(s!)\sim s\log s$, this gives $s_n\asymp_\rho n/\log n$.  The overshoot is bounded by one increment of $\log(s!) - \rho^*s$, so $\xi_n=O_\rho(\log n)$.  Finally, under $q^*$, $-\log q^*(s_n)=(\log2)s_n=O_\rho(n/\log n)$.  Substituting these estimates into $\omega_n(\rho)$ gives $\omega_n(\rho)=O_\rho((\log n)^{-1/2})$.  Theorem~\ref{thm:two-supercritical} now gives the pinned support, total-variation, and localization statements.
\end{proof}

\paragraph{The COM-negative-binomial family.}
Fix $\theta>0$, take $a_j=(\theta)_j/j!$, and again let $v(j)=\log(j!)$.  The canonical law is
$q_{\theta,x,\gamma}(j)=(\theta)_jx^j(j!)^{\gamma-1}/Z_{\theta,\gamma}(x)$, the zero-truncated COM-negative-binomial family of Chakraborty--Ong~\cite{ChakrabortyOng}.

\begin{corollary}
\label{cor:comnb}
Fix $\theta>0$, and let $\alpha_\theta^*(\rho)$ denote the limiting optimal support fraction for the COM-negative-binomial family.  Let
\[
 x_{\theta,c}=1-2^{-1/\theta},
 \qquad
 q_{\theta,c}(j)=\frac{(\theta)_j}{j!}\bigl(1-2^{-1/\theta}\bigr)^j,
 \qquad j\ge1.
\]
If $J\sim q_{\theta,c}$, define
\[
 \alpha_{\theta,c}=\frac1{\E J},
 \qquad
 \rho_{\theta,c}=\frac{\E\log(J!)}{\E J}.
\]
Then
\[
 \alpha_{\theta,c}=\frac1{2\theta(2^{1/\theta}-1)},
 \qquad
 \rho_{\theta,c}=\alpha_{\theta,c}\sum_{j\ge1}
 \frac{(\theta)_j}{j!}\bigl(1-2^{-1/\theta}\bigr)^j\log(j!).
\]
In particular,
\[
 \alpha_{\theta,c}\sim \frac{2^{-1/\theta}}{2\theta}\longrightarrow0
 \quad(\theta\downarrow0),
 \qquad
 \alpha_{\theta,c}\longrightarrow\frac1{2\log2}
 \quad(\theta\to\infty).
\]
Thus the finite-$\theta$ critical fraction ranges from $0$ to $1/(2\log2)$ as $\theta$ varies, although the limiting $\theta=\infty$ COM--Poisson model has a different critical fraction, namely $1/2$; see the remark below.
Then:
\begin{enumerate}[label=(\roman*)]
\item for each $\rho\in(0,\rho_{\theta,c})$ there is a unique $\gamma<0$ with $\rho_\gamma=\rho$, and
\[
 \alpha_\theta^*(\rho)=\alpha_\gamma;
\]
moreover, for every fixed $r$,
\[
 \left|\frac{k_n(\rho)}n-\alpha_\gamma\right|=O_\rho(n^{-1/2}),
 \qquad
 \dTV(\mu^{\rho,[r]}_{n,k_n(\rho)},q_\gamma^{\otimes r})=O_{\rho,r}\!\left((\log n/n)^{1/2}\right);
\]
\item at $\rho=\rho_{\theta,c}$ one has $\alpha_\theta^*(\rho_{\theta,c})=\alpha_{\theta,c}$, and both the support error and every fixed-$r$ total-variation error are $O_r((\log n/n)^{1/2})$;
\item for every $\rho>\rho_{\theta,c}$ one has $\alpha_\theta^*(\rho)=\alpha_{\theta,c}$, and
\[
 \left|\frac{k_n(\rho)}n-\alpha_{\theta,c}\right|+
 \dTV(\mu^{\rho,[r]}_{n,k_n(\rho)},q_{\theta,c}^{\otimes r})
 =O_{\theta,\rho,r}((\log n)^{-1/2}),
\]
while a vanishing fraction of symbols carries the excess $(\rho-\rho_{\theta,c})n$ in $V_k$.
\end{enumerate}
\end{corollary}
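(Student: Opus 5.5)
The plan is to mirror the proof of Corollary~\ref{cor:typeclass}: verify Assumption~\ref{ass:profile} for $a_j=(\theta)_j/j!$ and $v(j)=\log(j!)$, identify the boundary quantities $\gamma_c$, $x_c$, $q_c$, $\alpha_c$, $\rho_c$, and then read off (i)--(iii) from Theorems~\ref{thm:two-canonical} and~\ref{thm:two-supercritical}.

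\textbf{Boundary data.} The conditions on $v$ were already used for the factorial profile: $v(j)/j\sim\log j\to\infty$, $\Delta v(j)/v(j)=\log(j+1)/\log(j!)\to0$, and $\Delta^2v(j)=\log\frac{j+2}{j+1}\to0$. For the weights, $(\theta)_j/j!=\Gamma(j+\theta)/(\Gamma(\theta)\Gamma(j+1))\asymp j^{\theta-1}$. Hence $-\log a_j=O(\log j)=o(\log j!)$, so $\gamma_c=0$. At $\gamma_c=0$ we have $Z_0=A_\theta=(1-x)^{-\theta}-1$, which is finite exactly on $(0,1)$ and diverges at $x=1$; thus the family is nonempty and regular, exactly as in Corollary~\ref{cor:nb-one-stat}. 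The equation $Z_0(x_c)=1$ gives $x_c=1-2^{-1/\theta}$, so $q_c=q_{\theta,c}$. The mean computation from Corollary~\ref{cor:nb-one-stat} gives $\alpha_{\theta,c}=1/(2\theta(2^{1/\theta}-1))$, and $\rho_{\theta,c}=\alpha_{\theta,c}\E\log(J!)$ is the displayed series.

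\textbf{Asymptotics in $\theta$.} As $\theta\downarrow0$, $\theta(2^{1/\theta}-1)\sim\theta2^{1/\theta}$, which gives $\alpha_{\theta,c}\sim 2^{-1/\theta}/(2\theta)\to0$. As $\theta\to\infty$, $\theta(2^{1/\theta}-1)\to\log2$, which gives $1/(2\log 2)$.

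\textbf{Canonical branch.} For (i), I need $\rho_\gamma\downarrow0$ as $\gamma\to-\infty$. Along $Z_\gamma(x_\gamma)=1$ the law concentrates at $j=1$: the ratio $q_\gamma(j)/q_\gamma(1)$ carries a factor $(j!)^{\gamma}$, and I must check that this beats the growth of $x_\gamma^{j-1}$. Since $x_\gamma\le 1/a_1=1/\theta$ from the $j=1$ term, the factor $(j!)^\gamma$ dominates, so the mass concentrates at $j=1$ and $\rho_\gamma\to v(1)=0$. Lemma~\ref{lem:exp-family} gives strict monotonicity of $\rho_\gamma$, and Lemma~\ref{lem:partition-geometry} gives continuity up to $\gamma_c=0$. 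Together these yield a unique $\gamma<0$ with $\rho_\gamma=\rho$ for each $\rho\in(0,\rho_{\theta,c})$. Theorem~\ref{thm:two-canonical} then yields (i) and (ii).

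\textbf{Supercritical rate.} For (iii), Theorem~\ref{thm:two-supercritical} applies, and it remains to bound $\omega_n$.
\begin{itemize}
\item Since $\log(s!)-\rho_{\theta,c}s\asymp_\rho n$, we get $s_n\asymp n/\log n$.
\item The overshoot is at most one increment, so $\xi_n\le\log(s_n+1)+O(1)=O(\log n)$.
\item For the last term, $-\log q_{\theta,c}(s_n)=-s_n\log x_{\theta,c}+O(\log s_n)=O_\theta(n/\log n)$.
\end{itemize}
Hence $\omega_n=O_{\theta,\rho}((\log n)^{-1/2})$, and the localization statement is Theorem~\ref{thm:two-supercritical}(iii).

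\textbf{Main obstacle.} The step needing the most care is the behaviour of $x_\gamma$ as $\gamma\to-\infty$, which guarantees that the canonical branch starts at $\rho=0$. The bound $x_\gamma\le1/\theta$ from the first term should suffice, with dominated convergence controlling the tail sums of $j$ and $\log j!$. The remaining steps are routine substitutions into the two theorems.
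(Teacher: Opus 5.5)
Your proposal is correct and follows essentially the same route as the paper. You check the assumption with $\gamma_c=0$, read the boundary data off $Z_0(x)=(1-x)^{-\theta}-1$, and identify the canonical branch via concentration at $j=1$ as $\gamma\to-\infty$ together with the monotonicity from Lemma~\ref{lem:exp-family}; you then substitute $s_n\asymp n/\log n$, $\xi_n=O(\log n)$, and $-\log q_{\theta,c}(s_n)=O_\theta(n/\log n)$ into Theorems~\ref{thm:two-canonical} and~\ref{thm:two-supercritical}. Your bound $x_\gamma\le 1/\theta$, obtained from the $j=1$ term, usefully justifies the concentration step that the paper merely asserts.
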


\begin{proof}
Since $a_j=(\theta)_j/j!$, one has $(-\log a_j)/\log(j!)\to0$, so again $\gamma_c=0$.  At the boundary, $Z_{\theta,0}(x)=\sum_{j\ge1}(\theta)_jx^j/j!=(1-x)^{-\theta}-1$, which is finite exactly for $0<x<1$.  The unit-partition equation is therefore $(1-x)^{-\theta}=2$, so
\[
 x_{\theta,c}=1-2^{-1/\theta},
 \qquad
 q_{\theta,c}(j)=\frac{(\theta)_j}{j!}\bigl(1-2^{-1/\theta}\bigr)^j.
\]
Differentiating $Z_{\theta,0}(x)$ gives
\[
 \E J=x\frac{\dd}{\dd x}Z_{\theta,0}(x)\bigg|_{x=x_{\theta,c}}
 =x_{\theta,c}\theta(1-x_{\theta,c})^{-\theta-1}
 =2\theta(2^{1/\theta}-1),
\]
which yields
\[
 \alpha_{\theta,c}=\frac1{\E J}=\frac1{2\theta(2^{1/\theta}-1)}.
\]
This formula already shows the two extreme finite-$\theta$ regimes.  As $\theta\downarrow0$,
\[
 \E J\sim 2\theta 2^{1/\theta}\longrightarrow\infty,
 \qquad
 \alpha_{\theta,c}\sim\frac{2^{-1/\theta}}{2\theta}\longrightarrow0.
\]
Thus the boundary bulk uses larger and larger multiplicities and only a vanishing fraction of symbols.  On the other hand, $2^{1/\theta}-1=(\log2)/\theta+O(\theta^{-2})$, so $\alpha_{\theta,c}\to1/(2\log2)$ as $\theta\to\infty$.
The stated expression for $\rho_{\theta,c}$ is simply the definition of $\rho_c$ in \eqref{eq:alpharho}.

As $\gamma\to-\infty$ along the unit-partition curve, the law $q_\gamma$ converges to the point mass at $1$, so $\rho_\gamma\downarrow0$ and $\alpha_\gamma\uparrow1$.  Together with the strict monotonicity from Lemma~\ref{lem:exp-family}, this identifies the canonical branch as $\rho\in(0,\rho_{\theta,c})$.  Theorem~\ref{thm:two-canonical} gives (i)--(ii).

For (iii), the boundary law has a geometric tail up to a polynomial factor because $(\theta)_j/j!\asymp_\theta j^{\theta-1}$.  Hence $-\log q_{\theta,c}(j)=c_\theta j+O_\theta(\log j)$ for some $c_\theta>0$.  As in the factorial-profile example, this implies $s_n\asymp_\rho n/\log n$, $\xi_n=O_\rho(\log n)$, and therefore $\omega_n(\rho)=O_{\theta,\rho}((\log n)^{-1/2})$. The rest then follows from Theorem~\ref{thm:two-supercritical}.
\end{proof}

\begin{figure}[H]
\centering
\includegraphics[width=0.72\textwidth]{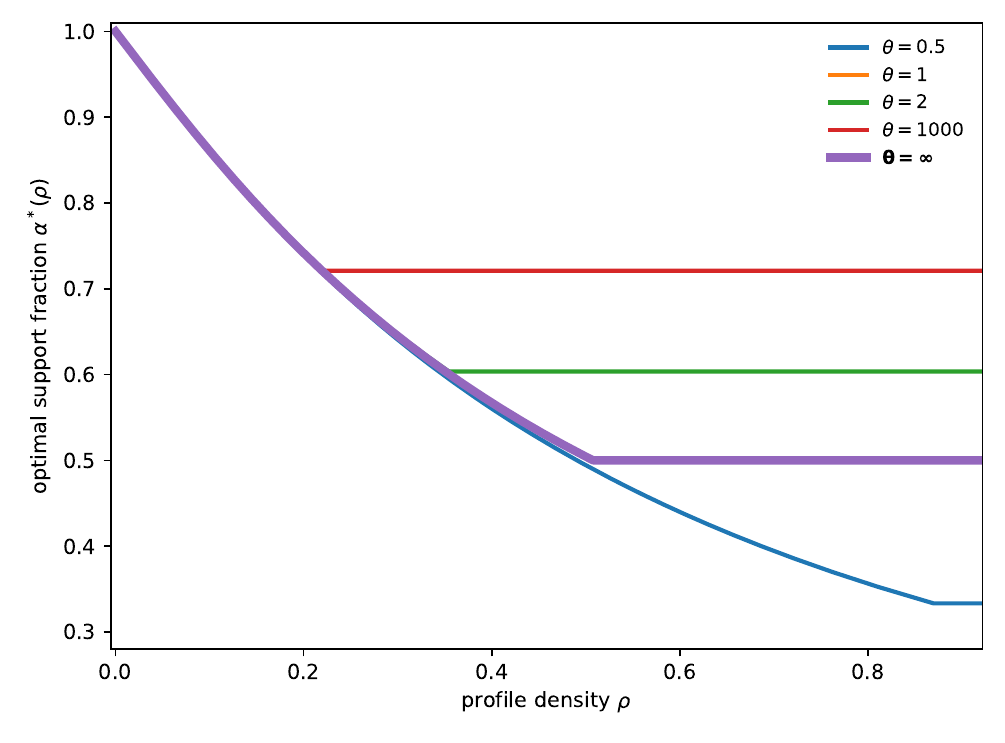}
\caption{The limiting optimal curves for the COM-negative-binomial family.  The bold curve corresponds to the limiting COM--Poisson model $(\theta=\infty)$.}
\label{fig:comnb-curves}
\end{figure}

If $k\sim\alpha n$ bulk multiplicities have law $J$, then $k\E J\sim n$ gives $\alpha=1/\E J$, while $k\E\log(J!)\sim\rho n$ gives $\rho=\E\log(J!)/\E J$.  Thus $(\alpha_{\theta,c},\rho_{\theta,c})$ is exactly the support fraction and the value of $V_k/n$ produced by the boundary zero-truncated negative-binomial bulk.

\begin{remark}
At $\theta=1$, since $(1)_j=j!$,
\[
 q_{1,x,\gamma}(j)\propto x^j(j!)^\gamma
 =q_{\infty,x,\gamma+1}(j),
\]
so the $\theta=1$ and $\theta=\infty$ optimal curves coincide after the reparametrization $\widetilde\gamma=\gamma+1$.

The limit $\theta\to\infty$ is nevertheless singular at the canonical endpoint.  Under the scaling $x=\lambda/\theta$, one has $(\theta)_j/\theta^j\to1$ for each fixed $j$, but the endpoint is determined by the large-$j$ tail.  For every finite $\theta$, $(\theta)_j/j!\asymp_\theta j^{\theta-1}$ and hence $\gamma_c(\theta)=0$, whereas the limiting COM--Poisson model has $a_j=1/j!$ and $\gamma_c(\infty)=1$.  Consequently the finite-$\theta$ critical points converge to the positive-Poisson point $(\rho,\alpha)=(0.219341\ldots,1/(2\log2))$, which is an interior point of the limiting COM--Poisson branch; that branch pins later at $(\rho_c,\alpha_c)=(0.507834\ldots,1/2)$.
\end{remark}

\paragraph{Power profiles.}
Take $a_j=1/j!$ and $v(j)=j^p$ with $1<p<2$.  Then
$\Delta^2v(j)\sim p(p-1)j^{p-2}$, so the nonlattice condition in Assumption~\ref{ass:profile} holds.

\begin{corollary}
\label{cor:power-profile}
Fix $1<p<2$, and let $\alpha_p^*(\rho)$ denote the limiting optimal support fraction for the profile $v(j)=j^p$.  Let $q_c$ be the positive-Poisson law with parameter $\log2$, so
\[
 q_c(j)=\frac{(\log2)^j}{j!},\qquad j\ge1,
\]
and define
\[
 \alpha_c=\frac1{2\log2},
 \qquad
 \rho_c=\frac{\E_cJ^p}{\E_cJ}=\frac1{2\log2}\E_cJ^p.
\]
At the endpoint $\rho=1$, the constraint implies $n-k=O(1)$, so $\alpha_p^*(1)=1$.  For $\rho>1$:
\begin{enumerate}[label=(\roman*)]
\item for each $\rho\in(1,\rho_c)$ there is a unique $\gamma<0$ such that $\rho_\gamma=\rho$, and
\[
 \alpha_p^*(\rho)=\alpha_\gamma.
\]
Moreover, if $k_n(\rho)$ is an optimizing support, then
\[
 \left|\frac{k_n(\rho)}n-\alpha_\gamma\right|=O_\rho(n^{-1/2}),
 \qquad
 \dTV(\mu^{\rho,[r]}_{n,k_n(\rho)},q_\gamma^{\otimes r})=O_{\rho,r}\!\left((\log n/n)^{1/2}\right)
\]
for every fixed $r$.
\item at $\rho=\rho_c$, one has $\alpha_p^*(\rho_c)=\alpha_c$, and both the support error and every fixed-$r$ total-variation error are $O_r((\log n/n)^{1/2})$.
\item for every $\rho>\rho_c$, one has $\alpha_p^*(\rho)=\alpha_c$.  Moreover,
\[
 \left|\frac{k_n(\rho)}n-\alpha_c\right|+
 \dTV(\mu^{\rho,[r]}_{n,k_n(\rho)},q_c^{\otimes r})
 =O_{\rho,r}\!\left(n^{-(p-1)/(2p)}(\log n)^{1/2}+n^{-1/p}\right),
\]
again for every fixed $r$, and the excess $(\rho-\rho_c)n$ in $V_k$ is carried by a vanishing fraction of symbols.
\end{enumerate}
\end{corollary}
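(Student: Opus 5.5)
The plan follows the two preceding corollaries: check Assumption~\ref{ass:profile}, identify the boundary, describe the canonical branch, apply Theorems~\ref{thm:two-canonical} and~\ref{thm:two-supercritical}, and estimate $\omega_n$.

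\textbf{Assumption and boundary.} For $v(j)=j^p$ with $1<p<2$:
\begin{itemize}
\item $v$ is increasing and $v(j)/j=j^{p-1}\to\infty$;
\item $\Delta v(j)/v(j)\sim p/j\to0$ and $\Delta^2v(j)\sim p(p-1)j^{p-2}\to0$.
\end{itemize}
Since $-\log a_j=\log j!\sim j\log j=o(j^p)$, we get $\gamma_c=0$. At $\gamma_c=0$ the family is $q_{x,0}(j)=x^j/[j!(e^x-1)]$, defined for all $x>0$, so it is regular. The equation $Z_0(x)=e^x-1=1$ gives $x_c=\log2$. Then $q_c$ is positive-Poisson$(\log2)$ with $\E_cJ=2\log2$, which yields the stated $\alpha_c$ and $\rho_c$.

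\textbf{Canonical branch.} As $\gamma\to-\infty$ along the unit-partition curve, the weight $e^{\gamma j^p}$ forces $q_\gamma$ to concentrate at $j=1$. Hence $\rho_\gamma=\E_\gamma J^p/\E_\gamma J\downarrow1$ and $\alpha_\gamma\uparrow1$; I would justify this with a short domination argument on the tail. By Lemma~\ref{lem:exp-family}, $\rho_\gamma$ is strictly increasing, and it is continuous with $\rho_\gamma\to\rho_c$ as $\gamma\uparrow0$ (Lemma~\ref{lem:partition-geometry}). Therefore each $\rho\in(1,\rho_c)$ equals $\rho_\gamma$ for a unique $\gamma<0$, and Theorem~\ref{thm:two-canonical}(i),(iii) gives (i), while (ii) gives the critical case.

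\textbf{Endpoint $\rho=1$.} Since $j^p\ge j$ with equality only at $j=1$, we have $V_k-n=\sum_i(J_i^p-J_i)\ge(2^p-2)\,\#\{i:J_i\ge2\}$. Also $n-k=\sum_i(J_i-1)$. The window $|V_k-n|\le1$ bounds the number of non-unit parts, and each such part has $J_i^p-J_i\ge cJ_i$, so both the number and the sizes of the non-unit parts are bounded. Hence $n-k=O(1)$ and $\alpha^*_p(1)=1$.

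\textbf{Supercritical regime (iii).} Here $s_n$ is the least $m$ with $m^p-\rho_cm\ge(\rho-\rho_c)n$, so $s_n\asymp_\rho n^{1/p}$. The overshoot is at most one increment, so $\xi_n\le\Delta v(s_n)+\rho_c=O(s_n^{p-1})=O(n^{(p-1)/p})$, giving $\xi_n/n=O(n^{-1/p})$. Stirling's formula gives $-\log q_c(s_n)=\log s_n!-s_n\log\log2+O(1)=O(n^{1/p}\log n)$. Then
\[
\left(\frac{-\log q_c(s_n)+\log n}{n}\right)^{1/2}=O\bigl(n^{-(p-1)/(2p)}(\log n)^{1/2}\bigr),
\]
so $\omega_n$ matches the stated bound, and Theorem~\ref{thm:two-supercritical} gives pinning, total variation, and localization.

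\textbf{Main obstacle.} The only delicate step is the $\gamma\to-\infty$ limit $\rho_\gamma\to1$. It requires $x_\gamma$ to grow like $e^{|\gamma|}$ so that $Z_\gamma(x_\gamma)=1$. I would handle it by bounding $q_\gamma(j)/q_\gamma(1)\le x_\gamma^{j-1}e^{\gamma(j^p-1)}/j!$. With $x_\gamma\le e^{-\gamma}$ (because $q_\gamma(1)\le1$), each such ratio is at most $e^{\gamma(j^p-j)}/j!$, which tends to $0$ summably, even after multiplying by $j^p$.
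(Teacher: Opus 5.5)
Your proposal is correct and follows the paper's proof essentially step for step. Both arguments take $\gamma_c=0$ and $x_c=\log 2$, identify the branch $(1,\rho_c)$ through concentration of $q_\gamma$ at $1$ as $\gamma\to-\infty$ together with strict monotonicity of $\rho_\gamma$, apply Theorems~\ref{thm:two-canonical} and~\ref{thm:two-supercritical}, and bound $\omega_n$ via $s_n\asymp n^{1/p}$, $\xi_n=O(n^{(p-1)/p})$ and $-\log q_c(s_n)=O(n^{1/p}\log n)$. The differences are cosmetic: the paper handles the endpoint with the one-line tangent bound $j^p-j\ge(p-1)(j-1)$, giving $(p-1)(n-k)\le V_k-n\le 1$, and it only asserts the $\gamma\to-\infty$ concentration, which your domination argument via $x_\gamma\le e^{-\gamma}$ actually justifies.
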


\begin{proof}
At $\rho=1$, convexity gives $(p-1)(n-k)\le V_k-n\le1$, proving the endpoint assertion.
For $\rho>1$, since $a_j=1/j!$, one has $(-\log a_j)/v(j)=\log(j!)/j^p\to0$, so $\gamma_c=0$.  At the boundary $\gamma=0$, $Z_0(x)=\sum_{j\ge1}x^j/j!=e^x-1$, which is finite for every $x>0$.  The unit-partition equation $Z_0(x_c)=1$ therefore becomes $e^{x_c}-1=1$, so
\[
 x_c=\log2,
 \qquad
 q_c(j)=\frac{(\log2)^j}{j!},\quad j\ge1.
\]
Under this law, $\E_cJ=\sum_{j\ge1}j(\log2)^j/j!=2\log2$, whence
\[
 \alpha_c=\frac1{\E_cJ}=\frac1{2\log2},
 \qquad
 \rho_c=\frac{\E_cJ^p}{\E_cJ}=\frac1{2\log2}\E_cJ^p.
\]

As $\gamma\to-\infty$ along the unit-partition curve, the law $q_\gamma$ concentrates at $1$.  Hence $\rho_\gamma\downarrow1$ and $\alpha_\gamma\uparrow1$.  Together with the strict monotonicity of $\rho_\gamma$ from Lemma~\ref{lem:exp-family}, this shows that the canonical branch is exactly $\rho\in(1,\rho_c)$.  Theorem~\ref{thm:two-canonical} now gives (i)--(ii).

For $\rho>\rho_c$, let $s_n$ be the smallest integer such that $s_n^p-\rho_cs_n\ge(\rho-\rho_c)n$.  Since $p>1$, this implies $s_n\asymp_\rho n^{1/p}$.  The overshoot $\xi_n=s_n^p-\rho_cs_n-(\rho-\rho_c)n$ is controlled by one increment of $m\mapsto m^p-\rho_cm$, namely
\[
 0\le \xi_n\le s_n^p-(s_n-1)^p+\rho_c=O_\rho(s_n^{p-1})=O_\rho(n^{(p-1)/p}).
\]
Moreover, since $q_c(j)=(\log2)^j/j!$, Stirling's formula gives
\[
 -\log q_c(s_n)=\log(s_n!)-s_n\log\log2=O_\rho(s_n\log s_n)=O_\rho(n^{1/p}\log n).
\]
Substituting these estimates into $\omega_n(\rho)$ yields
\[
 \omega_n(\rho)=O_\rho\!\left(n^{-(p-1)/(2p)}(\log n)^{1/2}+n^{-1/p}\right).
\]
Theorem~\ref{thm:two-supercritical} now gives (iii).
\end{proof}

\begin{figure}[H]
\centering
\includegraphics[width=0.72\textwidth]{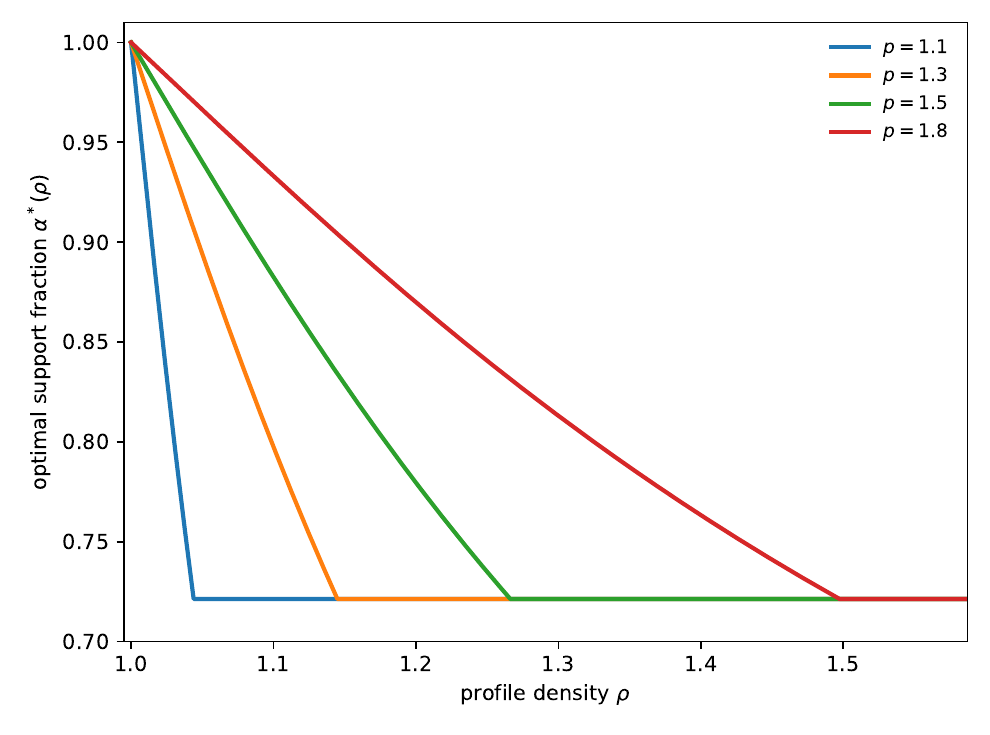}
\caption{The limiting optimal curves for the power profiles $v(j)=j^p$. As $\rho$ increases, each curve reaches the same pinned fraction $\alpha_c=1/(2\log2)$, at a threshold depending on $p$.}
\label{fig:power-curves}
\end{figure}

\paragraph{An example with no finite transition.}
Take $a_j=1/j!$ and $v(j)=j\sqrt{\log(j+1)}$.

\begin{corollary}
\label{cor:no-finite-transition}
Every finite profile rate on the canonical branch is attained at a finite canonical parameter.  In particular, this model has no finite pinning transition of the type in Theorem~\ref{thm:two-supercritical}.
\end{corollary}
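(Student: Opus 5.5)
The plan is to reduce this to Remark~\ref{rem:infinite-gamma}, using Lemma~\ref{lem:no-finite-boundary}. That means checking three things for $a_j=1/j!$ and $v(j)=j\sqrt{\log(j+1)}$.
\begin{itemize}
\item The first two conditions of Assumption~\ref{ass:profile} hold for $v$.
\item We have $-\log a_j/v(j)\to\infty$.
\item The canonical branch $\gamma\mapsto\rho_\gamma$ is continuous, strictly increasing and unbounded over all finite $\gamma$.
\end{itemize}
With these in hand, every $\rho$ above the lower endpoint equals $\rho_\gamma$ for a unique finite $\gamma$. There is then no boundary value $\gamma_c<\infty$ at which the family fails to be normalizable, so the mechanism of Theorem~\ref{thm:two-supercritical} never triggers.

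\textbf{Checking the hypotheses on $v$.} The map $v$ is clearly increasing, and $v(j)/j=\sqrt{\log(j+1)}\to\infty$. For the increment,
\[
\Delta v(j)=\sqrt{\log(j+2)}+j\bigl(\sqrt{\log(j+2)}-\sqrt{\log(j+1)}\bigr)=O\bigl(\sqrt{\log j}\bigr),
\]
because the bracket is $O\bigl(1/(j\sqrt{\log j})\bigr)$. Hence $\Delta v/v=O(1/j)\to0$. For the weights, Stirling gives $-\log a_j=\log j!\sim j\log j$. Therefore
\[
\frac{-\log a_j}{v(j)}\sim\sqrt{\log j}\to\infty .
\]
It follows that $Z_\gamma(x)=\sum_j x^j e^{\gamma v(j)}/j!$ is entire in $x$ for every real $\gamma$, since $e^{\gamma v(j)}=e^{o(j\log j)}$. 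In particular each one-parameter family $(q_{x,\gamma})_x$ is nonempty and regular, and $x_\gamma$ with $Z_\gamma(x_\gamma)=1$ exists for every $\gamma\in\mathbb R$.

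\textbf{Strict monotonicity.} I would invoke Lemma~\ref{lem:exp-family}, whose monotonicity argument uses only finiteness of $Z_\gamma$ near the unit-partition curve and so applies on all of $\mathbb R$. This gives $\rho_\gamma$ strictly increasing. As $\gamma\to-\infty$, $q_\gamma$ concentrates at $j=1$, so $\rho_\gamma\downarrow v(1)=\sqrt{\log2}$.

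\textbf{Unboundedness (main obstacle).} We need $\rho_\gamma\to\infty$ as $\gamma\to\infty$, which is the content of Lemma~\ref{lem:no-finite-boundary}; I would simply cite it. If a direct argument were needed, I would argue as follows.
\begin{enumerate}
\item Since $Z_\gamma(x_\gamma)=1$ and $e^{\gamma v(j)}$ grows with $\gamma$, we have $x_\gamma\to0$.
\item The mode of $q_\gamma$ sits where $\log x_\gamma+\gamma\,\Delta v(j)\approx\log(j+1)$. Because $\Delta v(j)\asymp\sqrt{\log j}$, this forces typical $J\to\infty$ in probability as $\gamma\to\infty$. Otherwise the terms at larger $j$, weighted by $e^{\gamma\Delta v}$, would dominate the normalization.
\item A size-biasing bound then gives $\rho_\gamma=\E_\gamma[J\sqrt{\log(J+1)}]/\E_\gamma J\ge\sqrt{\log(M+1)}$ times the size-biased probability that $J\ge M$. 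The factor $\sqrt{\log(M+1)}$ can be made large by choice of $M$, while that probability tends to $1$ as $\gamma\to\infty$.
\end{enumerate}
The delicate step is the tightness estimate in step 2, namely that $q_\gamma$ escapes to infinity.

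\textbf{Conclusion.} Combining continuity, strict monotonicity and unboundedness with the intermediate value theorem, every finite profile rate on the canonical branch $(\sqrt{\log 2},\infty)$ is attained at a unique finite $\gamma$. Theorem~\ref{thm:two-canonical}(i) then applies at that $\gamma$, so no pinning occurs.
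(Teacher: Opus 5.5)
Your proposal is correct and takes the same route as the paper. The paper's proof simply notes $-\log a_j/v(j)\to\infty$ and cites Remark~\ref{rem:infinite-gamma} with Lemma~\ref{lem:no-finite-boundary}; your checks of the hypotheses on $v$, the monotonicity via Lemma~\ref{lem:exp-family}, the lower endpoint $\sqrt{\log 2}$, and the intermediate value theorem just make explicit what that two-line proof leaves implicit. Your sketched direct argument for unboundedness is unnecessary, since the lemma's proof is a short contradiction from $d\lambda_\gamma/d\gamma=-\rho_\gamma$ and $q_\gamma(j)\le1$ at a fixed $j$ with $v(j)/j>M+1$. Also, your closing appeal to Theorem~\ref{thm:two-canonical}(i) is not part of the claim and is not literally covered, because Assumption~\ref{ass:profile} requires $\gamma_c<\infty$.
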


\begin{proof}
Since $\log(j!)/(j\sqrt{\log(j+1)})\to\infty$, one has $-\log a_j/v(j)\to\infty$.  Remark~\ref{rem:infinite-gamma}, together with Lemma~\ref{lem:no-finite-boundary}, gives $\rho_\gamma\to\infty$ and proves the claim.
\end{proof}

\section{Main lemmas}
\label{sec:lemmas}

This section isolates the two inputs used repeatedly in the proofs: the geometry of the canonical normalizing functions and the local-limit estimates needed to compare canonical and microcanonical measures.

\subsection{Properties of the canonical normalizing functions}

The coefficient assumptions determine where the canonical normalizing functions are finite.

\begin{lemma}
\label{lem:one-partition-geometry}
Under Assumption~\ref{ass:one-stat}, the equation $A(x)=1$ has a unique solution $x_*>0$, and $A$ is finite in a neighborhood of $x_*$.  The canonical mean $xA'(x)/A(x)$ is strictly increasing throughout the domain of finite normalization.
\end{lemma}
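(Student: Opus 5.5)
Let $R\in(0,\infty]$ denote the radius of convergence of $A(x)=\sum_{j\ge1}a_jx^j$. Since the coefficients are positive, $A$ is finite on $(0,R)$ and infinite on $(R,\infty)$. Assumption~\ref{ass:one-stat} says the domain $D=\{x>0:A(x)<\infty\}$ is nonempty and open. Nonemptiness forces $R>0$. Openness forces $A(R)=\infty$ whenever $R<\infty$, since otherwise $D=(0,R]$ would not be open. Hence $D=(0,R)$. The plan is to establish three facts in order: basic analytic properties of $A$ on $D$, the existence and uniqueness of $x_*$, and the monotonicity of the canonical mean.

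On $(0,R)$, $A$ is a power series with positive coefficients. It is therefore real-analytic, strictly increasing and continuous, and $A(x)\to0$ as $x\downarrow0$ because $A(x)\le x\,A(x_0)/x_0$ for $x\le x_0<R$.

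For the upper end of the range, I split into two cases.
\begin{itemize}
\item If $R<\infty$, monotone convergence gives $A(x)\uparrow A(R)=\infty$ as $x\uparrow R$.
\item If $R=\infty$, then $A(x)\ge a_1x\to\infty$.
\end{itemize}
In either case $A$ maps $(0,R)$ continuously and strictly increasingly onto $(0,\infty)$. The intermediate value theorem then gives a unique $x_*\in(0,R)$ with $A(x_*)=1$. Since $D=(0,R)$ is open, $A$ is finite on a neighborhood of $x_*$.

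For the canonical mean, write $x=e^\theta$ and $\psi(\theta)=\log A(e^\theta)$, which is smooth on $(-\infty,\log R)$. Termwise differentiation of the power series, justified inside the radius of convergence, gives
\[
\psi'(\theta)=\frac{xA'(x)}{A(x)}=\E_xJ,
\qquad
\psi''(\theta)=\Var_xJ.
\]
The law $q_x$ charges every $j\ge1$ because all $a_j>0$, so it is nondegenerate and $\Var_xJ>0$. Thus $\theta\mapsto\E_{e^\theta}J$ is strictly increasing, and since $x\mapsto\log x$ is increasing, so is $x\mapsto xA'(x)/A(x)$ on $D$.

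I expect no serious obstacle here. The only point needing care is the use of regularity to rule out a finite value $A(R)<1$, which would otherwise prevent the existence of $x_*$. This is exactly what the openness of the natural parameter domain excludes.
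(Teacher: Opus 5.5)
Your proposal is correct and follows essentially the same route as the paper. Both arguments establish continuity and strict monotonicity of $A$ with $A(0+)=0$, then show divergence at the right end, via $A(x)\ge a_1x$ or via regularity plus monotone convergence. This gives a unique crossing of level $1$, with openness of the domain supplying the neighborhood, and the identity $\frac{\dd}{\dd\log x}\E_xJ=\Var_x(J)>0$ gives monotonicity of the mean. You are slightly more explicit than the paper, identifying the domain as $(0,R)$ and justifying $A(0+)=0$ through $A(x)\le xA(x_0)/x_0$.
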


\begin{proof}
We first prove the existence and uniqueness of $x_*$.  On its domain of finiteness, $A$ is continuous and strictly increasing, and $A(x)\downarrow0$ as $x\downarrow0$.  If the domain is unbounded, $A(x)\ge a_1x$ gives $A(x)\to\infty$.  If its right endpoint is finite, regularity excludes that endpoint from the full domain.  The series therefore diverges there, and monotone convergence gives divergence as the endpoint is approached from below.  Hence $A$ crosses the level $1$ exactly once.  Regularity also gives finiteness in a neighborhood of the solution.

To prove strict monotonicity of the mean, differentiate under the canonical law $q_x(j)=a_jx^j/A(x)$:
\[
   \frac{\dd}{\dd\log x}\E_xJ=\Var_x(J)>0.
\]
Thus the canonical mean is strictly increasing.
\end{proof}

\begin{lemma}
\label{lem:partition-geometry}
Under Assumption~\ref{ass:profile}, the canonical partition functions have the following domain structure:
\begin{enumerate}[label=(\roman*)]
\item if $\gamma<\gamma_c$, then $Z_\gamma(x)<\infty$ for every $x>0$ and $Z_\gamma(x)\to\infty$ as $x\to\infty$;
\item if $\gamma=\gamma_c$, then $Z_{\gamma_c}(x)=1$ has a unique solution $x_c>0$, and $Z_{\gamma_c}$ is finite in a neighborhood of $x_c$;
\item if $\gamma>\gamma_c$, then $Z_\gamma(x)=\infty$ for every $x>0$.
\end{enumerate}
Consequently, for every $\gamma\le\gamma_c$ the equation $Z_\gamma(x)=1$ has a unique solution $x_\gamma$.  Moreover $x_\gamma\to x_c$ as $\gamma\uparrow\gamma_c$, and the laws $q_\gamma$ have a common exponential moment in $J$ for $\gamma\le\gamma_c$ sufficiently close to $\gamma_c$.
\end{lemma}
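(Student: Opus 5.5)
The plan is to read off the domain of finiteness of $Z_\gamma(x)=\sum_j a_jx^je^{\gamma v(j)}$ from the growth of the exponent
\[
\log\bigl(a_jx^je^{\gamma v(j)}\bigr)=v(j)\Bigl(\gamma-\frac{-\log a_j}{v(j)}\Bigr)+j\log x .
\]
By Assumption~\ref{ass:profile}, the bracket tends to $\gamma-\gamma_c$ and $j/v(j)\to0$.

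For (i), take $\gamma<\gamma_c$ and fix $\varepsilon=(\gamma_c-\gamma)/2$. For large $j$ the exponent is at most $-\varepsilon v(j)+j|\log x|\le-\tfrac{\varepsilon}{2}v(j)$, since $v(j)/j\to\infty$. This is at most $-j$ eventually, so the series converges for every $x>0$. Divergence as $x\to\infty$ follows from $Z_\gamma(x)\ge a_1xe^{\gamma v(1)}$.

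For (iii), take $\gamma>\gamma_c$. Symmetrically, the exponent is at least $\tfrac{\varepsilon}{2}v(j)\to\infty$, so the terms do not tend to zero and $Z_\gamma(x)=\infty$ for every $x$.

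For (ii), I apply Lemma~\ref{lem:one-partition-geometry} verbatim to the weights $\tilde a_j=a_je^{\gamma_cv(j)}$. Assumption~\ref{ass:profile} states precisely that $(q_{x,\gamma_c})_x$ is nonempty and regular, so Assumption~\ref{ass:one-stat} holds for $\tilde a$. That lemma then gives a unique $x_c$ with $Z_{\gamma_c}(x_c)=1$, together with finiteness on a neighborhood of $x_c$. For $\gamma<\gamma_c$, existence and uniqueness of $x_\gamma$ follow from (i): $Z_\gamma$ is continuous and strictly increasing on $(0,\infty)$, it tends to $0$ at $0^+$ by dominated convergence (each term is $O(x)$ for $x\le1$ and the series is finite at $x=1$), and it tends to $\infty$ at $\infty$.

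For the convergence $x_\gamma\to x_c$, I use monotonicity in $\gamma$. Since $v$ is increasing, we may assume $v\ge0$ for large $j$, and only finitely many terms have $v(j)<0$. I would phrase the argument via $Z_\gamma\le Z_{\gamma_c}$ up to those finitely many terms; for simplicity I will assume $v\ge0$ after a harmless shift, or handle the finite set separately. With $v\ge0$, $Z_\gamma(x)$ is nondecreasing in $\gamma$, so $x_\gamma$ is nonincreasing in $\gamma$ and $x_\gamma\ge x_c$. Let $\bar x=\lim_{\gamma\uparrow\gamma_c}x_\gamma\ge x_c$. Pick $x'\in(x_c,$ right end of the neighborhood$)$, so $Z_{\gamma_c}(x')>1$. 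By monotone convergence in $\gamma$, $Z_\gamma(x')\uparrow Z_{\gamma_c}(x')>1$, hence $x_\gamma<x'$ for $\gamma$ near $\gamma_c$. Therefore $\bar x\le x'$ for every such $x'$, which forces $\bar x=x_c$.

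For the common exponential moment, choose $x'$ as above and $\theta>0$ with $x_c e^{2\theta}<x'$. For $\gamma$ close to $\gamma_c$ we have $x_\gamma\le x_ce^{\theta}$, and then
\[
\E_\gamma e^{\theta J}=\sum_j a_j(x_\gamma e^\theta)^je^{\gamma v(j)}\le Z_{\gamma_c}(x_ce^{2\theta})+C<\infty,
\]
where $C$ absorbs the finitely many terms with $v(j)<0$.

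I expect the main obstacle to be the bookkeeping around the possible negative values of $v$ and the non-uniform convergence of $-\log a_j/v(j)$ in the comparison of $Z_\gamma$ with $Z_{\gamma_c}$. Everything else is a direct root test combined with monotone convergence.
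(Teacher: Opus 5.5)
Your proposal is essentially the paper's proof. It uses the same exponent comparison for (i) and (iii) and the same reduction of (ii) to Lemma~\ref{lem:one-partition-geometry} with weights $a_je^{\gamma_cv(j)}$. The exponential-moment bound is also the same: $\E_\gamma e^{\theta J}=Z_\gamma(x_\gamma e^{\theta})\le Z_\gamma(x_ce^{2\theta})$ once $x_\gamma\le x_ce^{\theta}$.

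The one step to repair is the convergence $x_\gamma\to x_c$. A constant shift $v\mapsto v+c$ is not harmless: it multiplies $Z_\gamma$ by $e^{\gamma c}$, which moves the root of $Z_\gamma(x)=1$. Without $v\ge0$ you also lose the monotonicity in $\gamma$ that gave you $x_\gamma\ge x_c$ and the existence of $\lim x_\gamma$.

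There are two clean fixes. The first is the linear shift $v(j)\mapsto v(j)+cj$ with $c$ large enough that $v(j)+cj\ge0$ for all $j$. This turns $Z_\gamma(x)$ into $Z_\gamma(xe^{\gamma c})$, leaves every $q_\gamma$ and Assumption~\ref{ass:profile} unchanged, and only rescales $x_\gamma$ by $e^{-\gamma c}$, which is continuous in $\gamma$.

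The second is to argue two-sidedly, as the paper does. For $x=x_c\pm\varepsilon$ inside the finiteness neighborhood, dominated convergence gives $Z_\gamma(x)\to Z_{\gamma_c}(x)$ as $\gamma\uparrow\gamma_c$. The tail terms are bounded by those of $Z_{\gamma_c}(x)$ because $v$ is eventually nonnegative, and the finitely many head terms are continuous in $\gamma$. This traps $x_\gamma$ in $(x_c-\varepsilon,x_c+\varepsilon)$ without any monotonicity in $\gamma$.

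The obstacle you anticipated does not arise: no uniformity in $-\log a_j/v(j)$ is needed, since the comparison with $Z_{\gamma_c}$ is termwise in $j$.
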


\begin{proof}
We start by proving (i).  Fix $\gamma<\gamma_c$ and write $\delta=\gamma_c-\gamma>0$.  By the definition of $\gamma_c$,
\[
  \log a_j+\gamma v(j)=-\delta v(j)+o(v(j)).
\]
For all sufficiently large $j$ the right-hand side is at most $-\delta v(j)/2$.  Since $v(j)/j\to\infty$, for every fixed $x>0$,
\[
  \log\bigl(a_jx^je^{\gamma v(j)}\bigr)
  \le -\frac{\delta}{2}v(j)+j\log x\le -j
\]
for all large $j$.  Thus $Z_\gamma(x)<\infty$ for every $x>0$.  Since $Z_\gamma(x)\ge a_1xe^{\gamma v(1)}$, it also follows that $Z_\gamma(x)\to\infty$ as $x\to\infty$.  This proves (i).

We next prove (ii).  With $\gamma=\gamma_c$ fixed, Assumption~\ref{ass:profile} gives a nonempty regular family in $x$.  Applying Lemma~\ref{lem:one-partition-geometry} to the positive weights $a_je^{\gamma_cv(j)}$ gives a unique solution $x_c$ of $Z_{\gamma_c}(x)=1$ and finiteness in a neighborhood of $x_c$.  This proves (ii).

To prove (iii), fix $\gamma>\gamma_c$ and put $\delta=\gamma-\gamma_c>0$.  Then
\[
  \log a_j+\gamma v(j)=\delta v(j)+o(v(j)).
\]
For every fixed $x>0$, the superlinear growth $v(j)/j\to\infty$ implies
\[
  \log\bigl(a_jx^je^{\gamma v(j)}\bigr)
  \ge \frac{\delta}{2}v(j)+j\log x\longrightarrow\infty.
\]
The summands therefore do not tend to zero, so $Z_\gamma(x)=\infty$.  This proves (iii).

We next establish the existence of $x_\gamma$ and its convergence to $x_c$.  For $\gamma<\gamma_c$, part (i) and positivity of the coefficients show that $Z_\gamma$ is continuous, strictly increasing from zero, and unbounded.  Thus it crosses $1$ exactly once.  Part (ii) gives the same conclusion at $\gamma_c$.

For $x$ in a sufficiently small neighborhood of $x_c$, dominated convergence gives $Z_\gamma(x)\to Z_{\gamma_c}(x)$ as $\gamma\uparrow\gamma_c$.  Indeed, $v$ is eventually nonnegative, so the tail terms for $\gamma\le\gamma_c$ are bounded by those of the finite series $Z_{\gamma_c}(x)$; the finitely many remaining terms are bounded on a fixed parameter interval.  For every sufficiently small $\varepsilon>0$, the inequalities $Z_{\gamma_c}(x_c-\varepsilon)<1<Z_{\gamma_c}(x_c+\varepsilon)$ therefore also hold with $\gamma$ in place of $\gamma_c$ when $\gamma$ is sufficiently close from below.  This places $x_\gamma$ between $x_c-\varepsilon$ and $x_c+\varepsilon$, proving convergence.

It remains to establish a common exponential moment.  By part (ii), choose $t>0$ such that $Z_{\gamma_c}(x_ce^{2t})<\infty$.  The convergence just proved gives $x_\gamma\le x_ce^t$ for $\gamma\le\gamma_c$ sufficiently close to $\gamma_c$.  Hence
\[
   \E_\gamma e^{tJ}
   =Z_\gamma(x_\gamma e^t)
   \le Z_\gamma(x_ce^{2t}).
\]
The same domination argument bounds the right-hand side uniformly for $\gamma\in[\gamma_c-\eta,\gamma_c]$, for some $\eta>0$.  Thus the same exponent $t$ and a finite bound apply to all these laws, including $q_c$.
\end{proof}

The following calculation identifies the optimal canonical curve.  Fix a profile rate $\rho>0$.  For each support fraction $\alpha$ in the interior region under consideration, let $x_\rho(\alpha)$ and $\gamma_\rho(\alpha)$ denote the canonical parameters that match the two constraints:
\[
   \E_{x_\rho(\alpha),\gamma_\rho(\alpha)}J=\frac1\alpha,
   \qquad
   \E_{x_\rho(\alpha),\gamma_\rho(\alpha)}v(J)=\frac\rho\alpha.
\]
Thus, when $k=\alpha n$, the canonical law has the same mean length and mean profile as the microcanonical constraint.  Let
$E_{n,k}(\rho)=\{T_k=n,\ |V_k-\rho n|\le1\}$.  The definition of $q_{x,\gamma}$ gives
\[
 W^{(a,v)}_{n,k}(\rho)
 =Z_\gamma(x)^k x^{-n}e^{-\gamma\rho n}
   \E_{x,\gamma}\!\left[
      e^{-\gamma(V_k-\rho n)}\mathbf1_{E_{n,k}(\rho)}
   \right],
\]
where here and below $(x,\gamma)=(x_\rho(\alpha),\gamma_\rho(\alpha))$.  On $E_{n,k}(\rho)$, the extra exponential factor is bounded above and below by constants depending only on $\gamma$.  Since the two constraints are centered under the matched law, Lemma~\ref{lem:two-lclt} gives $\Pp_{x,\gamma}(E_{n,k}(\rho))=\Theta(n^{-1})$, uniformly when the matched parameters remain in a compact set.  Hence
\[
   \log W^{(a,v)}_{n,k}(\rho)
   =nF_\rho(\alpha)-\log n+O(1),
\]
with
\[
   F_\rho(\alpha)
   =\alpha\log Z_{\gamma_\rho(\alpha)}\!\bigl(x_\rho(\alpha)\bigr)
    -\log x_\rho(\alpha)-\rho\,\gamma_\rho(\alpha).
\]
Thus $F_\rho$ is the leading exponential contribution associated with the support fraction $\alpha$.  The next lemma shows that it is strictly concave and identifies its maximizer.

\begin{lemma}
\label{lem:exp-family}
Fix $\rho$ and let $x_\rho(\alpha),\gamma_\rho(\alpha)$ be the matched parameters above.  If $\Sigma_{\alpha,\rho}$ denotes the covariance matrix of $(J,v(J))$ under the matched canonical law, then
\[
   F_\rho'(\alpha)
   =\log Z_{\gamma_\rho(\alpha)}\!\bigl(x_\rho(\alpha)\bigr),
   \qquad
   F_\rho''(\alpha)
   =-\frac1{\alpha^3}(1,\rho)\Sigma_{\alpha,\rho}^{-1}
      \binom{1}{\rho}<0.
\]
Hence $F_\rho$ is strictly concave, and its stationary point is characterized by
$Z_{\gamma_\rho(\alpha)}(x_\rho(\alpha))=1$.  Along the unit-partition curve,
\[
   \frac{d\rho_\gamma}{d\gamma}
   =\alpha_\gamma\Var_\gamma\!\bigl(v(J)-\rho_\gamma J\bigr)>0.
\]
\end{lemma}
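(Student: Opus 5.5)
The plan is a Legendre-duality computation. Write $\theta=\log x$ and let $\psi(\theta,\gamma)=\log Z_\gamma(e^\theta)$ be the log-partition function of the two-parameter exponential family with sufficient statistic $(J,v(J))$. Then
\[
\nabla\psi=(\E J,\E v(J)),\qquad \nabla^2\psi=\Sigma,
\]
and $\Sigma$ is positive definite because $v$ is nonaffine on the support. This holds since $v(j)/j\to\infty$, so $(J,v(J))$ is not supported on a line.

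By the inverse function theorem, the matching conditions $\nabla\psi=(1/\alpha,\rho/\alpha)$ define $(\theta,\gamma)=(\theta_\rho(\alpha),\gamma_\rho(\alpha))$ smoothly in $\alpha$, on the interior region where the family is finite. Rewrite
\[
F_\rho(\alpha)=\alpha\psi(\theta,\gamma)-\theta-\rho\gamma=\alpha\bigl[\psi(\theta,\gamma)-\langle(\theta,\gamma),m\rangle\bigr],\qquad m=(1/\alpha,\rho/\alpha).
\]
Hence $F_\rho(\alpha)=-\alpha\psi^*(m(\alpha))$, where $\psi^*$ is the convex conjugate of $\psi$ evaluated at the matched mean.

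For the first derivative, differentiate $F_\rho$ directly. The chain-rule terms are $\alpha\,\partial_\theta\psi\,\theta'+\alpha\,\partial_\gamma\psi\,\gamma'-\theta'-\rho\gamma'$. They cancel exactly, because the matching conditions say $\alpha\partial_\theta\psi=1$ and $\alpha\partial_\gamma\psi=\rho$. What remains is $F_\rho'(\alpha)=\psi=\log Z_{\gamma_\rho(\alpha)}(x_\rho(\alpha))$.

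For the second derivative, use $F_\rho''=\partial_\theta\psi\,\theta'+\partial_\gamma\psi\,\gamma'=\alpha^{-1}(1,\rho)\cdot(\theta',\gamma')$. Differentiating the matching equations $\nabla\psi(\theta,\gamma)=\alpha^{-1}(1,\rho)$ in $\alpha$ gives
\[
\Sigma(\theta',\gamma')^T=-\alpha^{-2}(1,\rho)^T.
\]
Substituting yields the claimed formula $F_\rho''=-\alpha^{-3}(1,\rho)\Sigma^{-1}(1,\rho)^T$, which is negative by positive definiteness. So $F_\rho$ is strictly concave, and its unique stationary point is where $Z=1$.

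For the last identity, parametrize the unit-partition curve by $\gamma$, with $x_\gamma$ defined by $\psi(\theta_\gamma,\gamma)=0$. Implicit differentiation gives $\theta_\gamma'=-\E_\gamma v(J)/\E_\gamma J=-\rho_\gamma$. Then differentiate $\E J$ and $\E v(J)$ along the curve using the covariance matrix:
\[
\frac{d}{d\gamma}\E J=\Cov(J,v-\rho_\gamma J),\qquad \frac{d}{d\gamma}\E v=\Cov(v,v-\rho_\gamma J).
\]
With $\rho_\gamma=\E v/\E J$, the quotient rule gives
\[
\rho_\gamma'=\frac{\Cov(v-\rho_\gamma J,\;v-\rho_\gamma J)}{\E J}=\alpha_\gamma\Var_\gamma(v-\rho_\gamma J).
\]
This is positive since $v(J)-\rho_\gamma J$ is nonconstant.

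The main obstacle is justifying the regularity. We need differentiation under the sum, which follows from local finiteness of $Z$, available for $\gamma<\gamma_c$ by Lemma~\ref{lem:partition-geometry}. We also need the matched parameters to exist and vary smoothly in $\alpha$. Here I would restrict to the open interior region where $\gamma<\gamma_c$, and invoke steepness there to invert $\nabla\psi$.
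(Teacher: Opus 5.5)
Your proposal is correct and follows the paper's proof essentially step for step. It uses the same chain-rule cancellation from $\alpha\,\partial_\theta\psi=1$ and $\alpha\,\partial_\gamma\psi=\rho$, the same implicit differentiation $\Sigma(\theta',\gamma')^{T}=-\alpha^{-2}(1,\rho)^{T}$ for $F_\rho''$, and the same sequence along the unit-partition curve: $d\theta_\gamma/d\gamma=-\rho_\gamma$, then the covariance formula, then the quotient rule; the Legendre-duality rewriting is a nice aside but is never used.

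One minor correction to your last paragraph. Smoothness of the matched parameters on $\{\gamma<\gamma_c\}$ needs only the inverse function theorem with nonsingular $\Sigma$, and you should drop the appeal to steepness. This two-parameter family is not steep: at boundary points $(x,\gamma_c)$ one still has $\E_{x,\gamma_c}v(J)<\infty$, which is exactly why $\rho_c$ is finite.
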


\begin{proof}
Write $\lambda=\log x$.  The natural-parameter identities are
\[
   \partial_\lambda\log Z_\gamma(e^\lambda)=\E_{\lambda,\gamma}J,
   \qquad
   \partial_\gamma\log Z_\gamma(e^\lambda)=\E_{\lambda,\gamma}v(J).
\]
Their Jacobian is the covariance matrix
\[
   \Sigma_{\alpha,\rho}
   =\begin{pmatrix}
      \Var(J) & \Cov(J,v(J))\\
      \Cov(J,v(J)) & \Var(v(J))
     \end{pmatrix}
\]
under the matched law.  We now vary $\alpha$ with $\rho$ fixed.  The matching equations are
\[
   \E J=\frac1\alpha,
   \qquad
   \E v(J)=\frac\rho\alpha.
\]
Differentiating both equations with respect to $\alpha$ gives
\[
   \Sigma_{\alpha,\rho}
   \binom{\dfrac{d\lambda_\rho}{d\alpha}}
         {\dfrac{d\gamma_\rho}{d\alpha}}
   =-\frac1{\alpha^2}\binom{1}{\rho}.
\]
Now differentiate
\[
   F_\rho(\alpha)
   =\alpha\log Z_{\gamma_\rho(\alpha)}(e^{\lambda_\rho(\alpha)})
    -\lambda_\rho(\alpha)-\rho\gamma_\rho(\alpha).
\]
The chain rule gives
\[
\begin{aligned}
   \frac{dF_\rho}{d\alpha}
   &=\log Z
     +\alpha\left(
       \frac{\partial\log Z}{\partial\lambda}
       \frac{d\lambda_\rho}{d\alpha}
       +
       \frac{\partial\log Z}{\partial\gamma}
       \frac{d\gamma_\rho}{d\alpha}
       \right)
     -\frac{d\lambda_\rho}{d\alpha}
     -\rho\frac{d\gamma_\rho}{d\alpha}\\
   &=\log Z
     +\alpha\left(
       \frac1\alpha\frac{d\lambda_\rho}{d\alpha}
       +\frac\rho\alpha\frac{d\gamma_\rho}{d\alpha}
       \right)
     -\frac{d\lambda_\rho}{d\alpha}
     -\rho\frac{d\gamma_\rho}{d\alpha}\\
   &=\log Z.
\end{aligned}
\]
Here $Z=Z_{\gamma_\rho(\alpha)}(x_\rho(\alpha))$.  Thus the stationary-point equation is exactly $Z=1$.

For the second derivative,
\[
\begin{aligned}
   \frac{d^2F_\rho}{d\alpha^2}
   &=\frac{d}{d\alpha}\log Z\\
   &=\frac1\alpha\frac{d\lambda_\rho}{d\alpha}
     +\frac\rho\alpha\frac{d\gamma_\rho}{d\alpha}\\
   &=\frac1\alpha(1,\rho)
      \binom{\dfrac{d\lambda_\rho}{d\alpha}}
            {\dfrac{d\gamma_\rho}{d\alpha}}\\
   &=-\frac1{\alpha^3}(1,\rho)
      \Sigma_{\alpha,\rho}^{-1}
      \binom{1}{\rho}.
\end{aligned}
\]
The matrix $\Sigma_{\alpha,\rho}$ is positive definite: every canonical law has positive mass on all $j\ge1$, and singularity would force $v(j)$ to be affine in $j$, contrary to $v(j)/j\to\infty$.  Therefore the last quantity is strictly negative, proving strict concavity.

Finally consider the unit-partition curve $Z_\gamma(x_\gamma)=1$ and write
$\lambda_\gamma=\log x_\gamma$.  From
$\log Z_\gamma(e^{\lambda_\gamma})=0$ we obtain
\[
   0
   =\E_\gamma J\,\frac{d\lambda_\gamma}{d\gamma}
    +\E_\gamma v(J),
\]
so
\[
   \frac{d\lambda_\gamma}{d\gamma}=-\rho_\gamma.
\]
More generally, for any function $g$ of at most polynomial growth in $J$ and $v(J)$, differentiation under the sum gives
\[
   \frac{d}{d\gamma}\E_\gamma g(J)
   =\Cov_\gamma\!\left(g(J),
      v(J)+J\frac{d\lambda_\gamma}{d\gamma}\right)
   =\Cov_\gamma\!\left(g(J),v(J)-\rho_\gamma J\right).
\]
Set $H_\gamma(J)=v(J)-\rho_\gamma J$.  Using
$\rho_\gamma=\E_\gamma v(J)/\E_\gamma J$ and the quotient rule,
\[
\begin{aligned}
   \frac{d\rho_\gamma}{d\gamma}
   &=\frac{
      \Cov_\gamma(v(J),H_\gamma)
      -\rho_\gamma\Cov_\gamma(J,H_\gamma)}
      {\E_\gamma J}\\
   &=\frac{\Var_\gamma(H_\gamma)}{\E_\gamma J}
    =\alpha_\gamma\Var_\gamma(H_\gamma)>0.
\end{aligned}
\]
This proves the last assertion.
\end{proof}

Assumption~\ref{ass:profile} also implies $\log v(j)=o(j)$.  Indeed,
\[
 \log v(j+1)-\log v(j)
 =\log\!\left(1+\frac{\Delta v(j)}{v(j)}\right)\longrightarrow0,
\]
so for every $\varepsilon>0$ these increments are eventually bounded by $\varepsilon$, and hence $\log v(j)\le C_\varepsilon+\varepsilon j$.

The derivative formula above is therefore uniform up to the boundary.  By boundary regularity and compactness of a short arc $\gamma\in[\gamma_c-\delta,\gamma_c]$, the laws $q_\gamma$ have a common exponential moment in $J$; together with $\log v(j)=o(j)$, this makes the moments involving $J$ and $v(J)$ in the covariance formula uniformly integrable.  Hence $\alpha_\gamma$, $\rho_\gamma$, and $d\rho_\gamma/d\gamma$ extend continuously to $\gamma_c$.  In particular, because the boundary variance in Lemma~\ref{lem:exp-family} is strictly positive, there are $0<m<M<\infty$ and $\delta>0$ such that
\[
 m\le \frac{d\rho_\gamma}{d\gamma}\le M,
 \qquad \gamma_c-\delta\le\gamma\le\gamma_c.
\]

The next lemma is not needed for the main theorems.  It records the complementary case in which the canonical curve has no finite endpoint, and will be used only in the final example of Section~\ref{sec:examples}.

\begin{lemma}
\label{lem:no-finite-boundary}
Suppose $v$ is increasing, $v(j)/j\to\infty$, and $-\log a_j/v(j)\to\infty$.  Then $Z_\gamma$ is entire for every finite $\gamma$, the unit-partition point $x_\gamma$ exists uniquely, and $\rho_\gamma\to\infty$ as $\gamma\to\infty$.
\end{lemma}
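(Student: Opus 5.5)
The proof splits into three parts: finiteness of $Z_\gamma$, existence and uniqueness of $x_\gamma$, and divergence of $\rho_\gamma$.

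\emph{Finiteness.} Fix $\gamma$ and $x>0$. Since $-\log a_j/v(j)\to\infty$ and $v$ is eventually positive, for large $j$ we have $\log a_j+\gamma v(j)\le -(|\gamma|+1)v(j)$. Superlinearity $v(j)/j\to\infty$ then gives
\[
\log\bigl(a_jx^je^{\gamma v(j)}\bigr)\le -(|\gamma|+1)v(j)+j\log x\le -j
\]
eventually. This holds uniformly for $x$ in compacts, so $Z_\gamma$ is a power series with infinite radius of convergence, i.e.\ entire.

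\emph{The unit-partition point.} As in the proof of Lemma~\ref{lem:partition-geometry}(i), $Z_\gamma$ is continuous and strictly increasing on $(0,\infty)$. It tends to $0$ as $x\downarrow0$ and satisfies $Z_\gamma(x)\ge a_1xe^{\gamma v(1)}\to\infty$. Hence $Z_\gamma(x)=1$ has exactly one solution $x_\gamma$. The derivative computations of Lemma~\ref{lem:exp-family} apply verbatim, so $\rho_\gamma$ is strictly increasing and its limit $\rho_\infty\le\infty$ exists as $\gamma\to\infty$.

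\emph{Divergence of $\rho_\gamma$.} Suppose for contradiction that $\rho_\gamma\le R<\infty$ for all $\gamma$. The plan is to show that $q_\gamma$ puts non-negligible mass far out in the tail.

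First, the mass cannot stay bounded. For any fixed $N$, consider $j>N$ with $v(j)/j>v(N)/N$; such $j$ exist, and we may take one with $v(j)-v(N)j/N$ large. Then the ratio
\[
\frac{q_\gamma(j)}{q_\gamma(N)}=\frac{a_j}{a_N}\,x_\gamma^{\,j-N}e^{\gamma(v(j)-v(N))}
\]
must be controlled along the unit curve. The key step is to choose the tilt comparison along rays. Using $d\lambda_\gamma/d\gamma=-\rho_\gamma\ge -R$, we get $\lambda_\gamma\ge\lambda_0-R\gamma$. Then for each $j$,
\[
\log\frac{q_\gamma(j)}{q_0(j)}\ge \gamma\bigl(v(j)-Rj\bigr).
\]
Since $v(j)/j\to\infty$, pick $j_0$ with $v(j_0)>(R+1)j_0$. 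Then $q_\gamma(j_0)\ge q_0(j_0)e^{\gamma j_0}\to\infty$, which contradicts $q_\gamma(j_0)\le1$. Hence $\rho_\gamma\to\infty$.

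The main obstacle I expect is justifying the differentiation and the formula $d\lambda_\gamma/d\gamma=-\rho_\gamma$ for all $\gamma$ in this setting. Because $Z_\gamma$ is entire jointly in the relevant parameters, with dominated tails on compacts, differentiation under the sum and the implicit function theorem should go through exactly as in Lemma~\ref{lem:exp-family}.
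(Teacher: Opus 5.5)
Your proposal is correct and follows the paper's proof essentially verbatim. Entireness comes from $-\log a_j/v(j)\to\infty$ dominating every linear term, and divergence comes from integrating $d\lambda_\gamma/d\gamma=-\rho_\gamma\ge -R$ to force $q_\gamma(j_0)>1$ for a fixed $j_0$ with $v(j_0)>(R+1)j_0$. The paragraph about comparing $q_\gamma(j)/q_\gamma(N)$ is a false start that the argument never uses, so you can delete it.
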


\begin{proof}
We first prove finiteness of $Z_\gamma$ and existence of its unit-partition point.  For each finite $\gamma$, the coefficient assumption gives $\log a_j+\gamma v(j)=-c_jv(j)$ with $c_j\to\infty$; since $v(j)/j\to\infty$, this dominates every linear term in $j$, so $Z_\gamma$ is entire and crosses the level $1$ exactly once.

It remains to prove $\rho_\gamma\to\infty$.  Along the unit-partition curve, Lemma~\ref{lem:exp-family} gives $d\lambda_\gamma/d\gamma=-\rho_\gamma$, where $\lambda_\gamma=\log x_\gamma$.  If $\rho_\gamma\le M$ for all large $\gamma$, then $\lambda_\gamma\ge-M\gamma+O(1)$.  Choose a fixed $j$ with $v(j)/j>M+1$.  Since $q_\gamma(j)\le1$,
\[
  0\ge \log q_\gamma(j)
  =\log a_j+j\lambda_\gamma+\gamma v(j)
  \ge \gamma\{v(j)-Mj\}+O(1)>0
\]
for all sufficiently large $\gamma$, which is impossible.
\end{proof}

\subsection{Local central limit estimates}
\label{sec:lclt}

The one-statistic theorem uses the classical lattice local central limit theorem; see, for example, Petrov~\cite{Petrov1975}.  We record for completeness the form used below.

\begin{lemma}
\label{lem:lclt}
Let $J_1,J_2,\ldots$ be iid positive-integer-valued variables with $\Pp(J=j)>0$ for every $j\ge1$, mean $\mu$, variance $\sigma^2>0$, and an exponential moment near the origin.  If $k\asymp n$ and $n-k\mu=O(1)$, then
\begin{equation}
 \Pp(T_k=n)=\frac{1+o(1)}{\sigma\sqrt{2\pi k}}.
 \label{eq:lclt-centered}
\end{equation}
\end{lemma}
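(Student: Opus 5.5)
We prove the lattice local limit theorem \eqref{eq:lclt-centered} by the standard Fourier-inversion argument.

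\emph{Inversion and splitting.} Since $T_k$ is integer valued, write
\[
\Pp(T_k=n)=\frac1{2\pi}\int_{-\pi}^{\pi}\varphi(t)^k e^{-itn}\dd t,
\qquad \varphi(t)=\E e^{itJ}.
\]
Because $\Pp(J=1)>0$ and $\Pp(J=2)>0$, the support of $J-1$ has greatest common divisor $1$. Hence $|\varphi(t)|<1$ for $0<|t|\le\pi$, and by continuity $\sup_{\delta\le|t|\le\pi}|\varphi(t)|\le 1-\eta$ for each fixed $\delta>0$. The contribution of $\delta\le|t|\le\pi$ is therefore $O((1-\eta)^k)$. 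This is $o(k^{-1/2})$, since $k\asymp n\to\infty$.

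\emph{The central region.} For $|t|\le\delta$ the exponential moment makes $\log\varphi$ analytic near $0$, with
\[
\log\varphi(t)=i\mu t-\tfrac12\sigma^2t^2+O(|t|^3).
\]
Hence
\[
\varphi(t)^ke^{-itn}=\exp\bigl(-it(n-k\mu)-\tfrac12k\sigma^2t^2+O(k|t|^3)\bigr).
\]
Substitute $t=s/(\sigma\sqrt k)$, so $\dd t=\dd s/(\sigma\sqrt k)$. Because $n-k\mu=O(1)$, the linear phase becomes $O(|s|/\sqrt k)$, and the cubic term becomes $O(|s|^3/\sqrt k)$.

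\emph{Dominated convergence.} Take $\delta$ small enough that the cubic error is at most $\tfrac14\sigma^2t^2$. Then the integrand is bounded by $e^{-s^2/4}$ on $|s|\le\delta\sigma\sqrt k$. It converges pointwise to $e^{-s^2/2}$, so dominated convergence gives
\[
\frac1{2\pi\sigma\sqrt k}\int e^{-s^2/2}\dd s\,(1+o(1))=\frac{1+o(1)}{\sigma\sqrt{2\pi k}}.
\]
Adding the negligible outer region proves \eqref{eq:lclt-centered}.

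The only point needing care is the aperiodicity step, that $|\varphi|<1$ away from $0$. It is what forces the lattice span to be $1$, and it is immediate from the positivity of $\Pp(J=j)$ for every $j\ge1$. The variance condition $\sigma^2>0$ is needed for the Gaussian limit and is part of the hypotheses. Alternatively, one may simply cite Petrov's lattice local limit theorem, which under these hypotheses gives uniformly $\Pp(T_k=n)=(\sigma\sqrt{2\pi k})^{-1}e^{-(n-k\mu)^2/(2k\sigma^2)}+o(k^{-1/2})$. Since $(n-k\mu)^2/k=O(1/k)\to0$, this yields the same conclusion.
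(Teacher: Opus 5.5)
Your proof is correct and follows the paper's route: the paper just cites Petrov's lattice local limit theorem, and your Fourier-inversion argument (aperiodicity from $\Pp(J=1),\Pp(J=2)>0$, an exponentially small outer region, and a Gaussian central region by dominated convergence) is the standard proof behind that citation. Your closing remark about citing Petrov is exactly what the paper does.
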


\begin{proof}
This is the standard lattice local central limit theorem; see Petrov~\cite{Petrov1975}.
\end{proof}

For the profile problem the first coordinate remains lattice, since $J$ is integer-valued, while the second is nonlattice under Assumption~\ref{ass:profile}.  The corresponding mixed local limit theorem is classical; see Stone~\cite{Stone1967}.  We record only the fixed-window consequence needed here, with $k/n$ bounded away from $0$ and $1$.

\begin{samepage}
\begin{lemma}
\label{lem:two-lclt}
Suppose Assumption~\ref{ass:profile} holds.  Fix $\varepsilon\in(0,1/2)$, $L<\infty$, and a compact set $K$ of parameters $(x,\gamma)$ with $x>0$ and $Z_\gamma(x)<\infty$, possibly containing $(x_c,\gamma_c)$.  Uniformly for $(x,\gamma)\in K$, $S\in\mathbb R$, and
\[
 \varepsilon\le\frac{k}{n}\le1-\varepsilon,
 \qquad
 |n-k\E_{x,\gamma}J|+|S-k\E_{x,\gamma}v(J)|\le L,
\]
we have, for all sufficiently large $n$,
\[
 \frac{c}{n}\le
 \Pp_{x,\gamma}\bigl(T_k=n,\ |V_k-S|\le1\bigr)
 \le\frac{C}{n},
\]
where $c,C>0$ depend only on the model, $K$, $\varepsilon$, and $L$.
\end{lemma}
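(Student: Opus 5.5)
We prove the two-sided bound by a mixed lattice--nonlattice local CLT in the spirit of Stone~\cite{Stone1967}. The strategy is Fourier inversion in the lattice coordinate and smoothing in the nonlattice coordinate, with every estimate made uniform over the compact set $K$.

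Write $X=(J,v(J))$ under $q_{x,\gamma}$, with mean $m=m(x,\gamma)$ and covariance $\Sigma=\Sigma(x,\gamma)$. We first record the uniform inputs.
\begin{enumerate}[label=(\alph*)]
\item $\Sigma$ is positive definite, as in the proof of Lemma~\ref{lem:exp-family}. It depends continuously on $(x,\gamma)\in K$, so its eigenvalues are bounded above and below on $K$.
\item Third moments of $X$ are uniformly bounded on $K$. This uses the common exponential moment in $J$, obtained exactly as in Lemma~\ref{lem:partition-geometry} (the domain of finiteness is open in $x$ at each $\gamma$, and compactness lets us shrink the exponent), together with $\log v(j)=o(j)$.
\end{enumerate}

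For the upper and lower bounds on $\Pp(T_k=n,\,|V_k-S|\le1)$ we sandwich the indicator of $[-1,1]$ between functions $g_\pm$ whose Fourier transforms $\widehat g_\pm$ are compactly supported, say in $[-\Lambda,\Lambda]$. Such functions exist: take a Beurling--Selberg type majorant, or convolve a slightly enlarged or shrunk interval with a Fejér kernel. For the lower bound $g_-$ need not be nonnegative everywhere; it only has to satisfy $g_-\le\mathbf 1_{[-1,1]}$ and $\int g_->0$. Inversion then gives
\[
\E\bigl[\mathbf 1_{T_k=n}\,g_\pm(V_k-S)\bigr]
=\frac1{(2\pi)^2}\int_{-\pi}^{\pi}\int_{-\Lambda}^{\Lambda}
\varphi(s,t)^k\,e^{-isn-itS}\,\widehat g_\pm(t)\,\dd t\,\dd s,
\]
where $\varphi(s,t)=\E e^{isJ+itv(J)}$.

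We split the integration domain into two regions.
\begin{enumerate}[label=(\alph*)]
\setcounter{enumi}{2}
\item \emph{Near the origin}, $|(s,t)|\le\delta$. A third-order Taylor expansion of $\log\varphi$ with uniform remainder, followed by the rescaling $(s,t)\mapsto(s,t)/\sqrt k$, gives the Gaussian main term
\[
\frac{\int g_\pm}{2\pi k\sqrt{\det\Sigma}}\,e^{-O(L^2/k)}\bigl(1+o(1)\bigr),
\]
uniformly in the allowed range. Since $k\asymp n$, this is $\Theta(1/n)$.
\item \emph{The remaining compact region} $\{\delta\le|(s,t)|,\ |s|\le\pi,\ |t|\le\Lambda\}$. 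We need $\sup|\varphi|\le1-\eta$ uniformly over this region and over $K$. Then the contribution is $O\bigl((1-\eta)^{\varepsilon n}\bigr)$, which is negligible.
\end{enumerate}

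The main obstacle is the strict bound $|\varphi(s,t)|<1$ away from the origin, i.e.\ the aperiodicity step. Equality $|\varphi(s,t)|=1$ would force $sj+tv(j)\in\theta+2\pi\mathbb Z$ for all $j\ge1$, because every $j$ carries positive mass. Taking second differences gives $t\,\Delta^2v(j)\in2\pi\mathbb Z$ for every $j$.
\begin{enumerate}[label=(\alph*)]
\setcounter{enumi}{4}
\item If $t\neq0$: since $\Delta^2v(j)\to0$, eventually $t\Delta^2v(j)=0$, so $v$ is eventually affine. This contradicts $v(j)/j\to\infty$.
\item If $t=0$: we need $s\in2\pi\mathbb Z$, so $s=0$ on $[-\pi,\pi]$. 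This is the origin, which is excluded.
\end{enumerate}
Continuity of $\varphi$ in $(s,t,x,\gamma)$, which follows from dominated convergence and the uniform moments in (b), and compactness of the region times $K$ then upgrade pointwise strictness to a uniform gap $\eta>0$.

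The error terms from (c) and (d) are uniform in $S$ and in the centering offsets up to $L$. Combining the upper and lower sandwiches therefore gives $c/n\le\Pp\le C/n$.
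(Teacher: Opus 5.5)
Your proposal is correct and follows the paper's route: a Stone-type characteristic-function argument made uniform over $K$ via uniform third moments, uniform eigenvalue bounds on $\Sigma$, and a uniform gap $|\varphi|\le 1-\eta$ away from the origin, obtained from the same second-difference aperiodicity argument. The only difference is that you write out the smoothing step (the Beurling--Selberg sandwich) and the Gaussian main-term computation explicitly, whereas the paper verifies the uniform hypotheses and defers those steps to Stone's proof.
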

\end{samepage}

\begin{proof}
We follow Stone's characteristic-function proof~\cite[Corollary~1, p.~218; proof, p.~221]{Stone1967}.  Write $\zeta=(x,\gamma)$, $X=(J,v(J))$, $m_\zeta=\E_\zeta X$, $\Sigma_\zeta=\Cov_\zeta(X)$, and
$\phi_\zeta(u)=\E_\zeta e^{iu\cdot X}$.  To make Stone's argument uniform over $\zeta\in K$, it is enough to have
\[
 \sup_{\zeta\in K}\E_\zeta\|X\|^3<\infty,
 \qquad
 0<c_\Sigma\le\lambda_{\min}(\Sigma_\zeta)
 \le\lambda_{\max}(\Sigma_\zeta)\le C_\Sigma,
 \qquad
 \sup_{\zeta\in K,\,u\in F}|\phi_\zeta(u)|\le1-\eta_F
\]
for every compact $F\subset([-\pi,\pi]\times\mathbb R)\setminus\{(0,0)\}$, with $\eta_F>0$.

For the first estimate, the consequence $\log v(j)=o(j)$ established above gives $v(j)^3\le e^{\eta j}$ eventually for every $\eta>0$.  Lemma~\ref{lem:partition-geometry} and compactness therefore give, for some $\tau>0$,
\[
 \sup_{\zeta\in K}\E_\zeta e^{\tau J}<\infty,
 \qquad
 \sup_{\zeta\in K}\E_\zeta\|X\|^3<\infty.
\]

For the second, $v$ is not affine and every $q_\zeta(j)$ is positive, so $\Sigma_\zeta$ is positive definite for every $\zeta$.  Its entries depend continuously on $\zeta$, hence compactness of $K$ gives the stated uniform lower and upper eigenvalue bounds.

For the third, suppose $|\phi_\zeta(s,t)|=1$.  Since every value of $J$ has positive probability, $sJ+tv(J)$ must be constant modulo $2\pi$ on $\mathbb N_+$.  Taking first and then second differences gives
\[
 s+t\Delta v(j)\in2\pi\mathbb Z,
 \qquad
 t\Delta^2v(j)\in2\pi\mathbb Z
 \quad( j\ge1).
\]
If $t\ne0$, then $\Delta^2v(j)\to0$ forces $\Delta^2v(j)=0$ for all sufficiently large $j$, so $v$ is eventually affine, contradicting $v(j)/j\to\infty$.  Thus $t=0$, and then $s\in2\pi\mathbb Z$.  In the fundamental domain $s\in[-\pi,\pi]$, equality $|\phi_\zeta(s,t)|=1$ therefore occurs only at $(0,0)$.  Continuity on the compact set $K\times F$ yields the required gap $1-\eta_F$. Under the condition
\[
 |n-k\E_\zeta J|+|S-k\E_\zeta v(J)|\le L,
\]
the leading Gaussian term is bounded above and below by positive multiples of $k^{-1}$.  Applying Stone's theorem leads to
\[
 \frac{c_0}{k}\le
 \Pp_\zeta\bigl(T_k=n,\ |V_k-S|\le1\bigr)
 \le\frac{C_0}{k}.
\]
Since $\varepsilon n\le k\le(1-\varepsilon)n$, the claimed bounds follow.
\end{proof}

The next estimate is a standard consequence of exponential tilting and the lattice local central limit theorem; see, for example, Petrov~\cite{Petrov1975}.  We record the uniform form needed in the proofs below.

\begin{lemma}
\label{lem:offcenter}
Under the assumptions of Lemma~\ref{lem:lclt}, put $\alpha=1/\E J$.  For every $\varepsilon\in(0,1/2)$, there are $c,C>0$ such that, for all sufficiently large $n$,
\begin{equation}
 \Pp(T_k=n)
 \le \frac{C}{\sqrt n}
    \exp\!\left\{-cn\left(\frac{k}{n}-\alpha\right)^2\right\},
 \qquad \varepsilon\le\frac{k}{n}\le1-\varepsilon.
 \label{eq:offcenter-local}
\end{equation}
The constants depend only on $\varepsilon$ and the law of $J$.
\end{lemma}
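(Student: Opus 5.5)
The plan is to use exponential tilting combined with the centered local bound of Lemma~\ref{lem:lclt}, made uniform over a compact range of tilts. Let $q$ denote the law of $J$, with $q(j)=\Pp(J=j)$, and let $\Lambda(\theta)=\log\E e^{\theta J}$, which is finite for $|\theta|<\theta_0$. For a tilt $\theta$ let $q^{(\theta)}(j)=q(j)e^{\theta j-\Lambda(\theta)}$. The basic identity is
\[
\Pp(T_k=n)=e^{k\Lambda(\theta)-\theta n}\,\Pp^{(\theta)}(T_k=n).
\]
I will use it with $\theta=\theta(k)$ chosen so that $k\Lambda'(\theta)=n$, that is, the tilted mean equals $n/k$.

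\textbf{Step 1: the range of $k$ near $\alpha n$.} Fix a small $\delta>0$ and first treat $|k/n-\alpha|\le\delta$. Since $\Lambda'$ is smooth and strictly increasing with $\Lambda'(0)=1/\alpha$, the equation $\Lambda'(\theta)=n/k$ has a unique solution $\theta\in[-\theta_1,\theta_1]$ with $\theta_1<\theta_0$. Moreover $|\theta|\asymp|n/k-1/\alpha|\asymp|k/n-\alpha|$. The exponent in the identity is $-k\,I(n/k)$, where $I$ is the Legendre transform of $\Lambda$. Because $I(1/\alpha)=0$, $I'(1/\alpha)=0$ and $I''=1/\Lambda''>0$ near $1/\alpha$, we get
\[
k\,I(n/k)\ge c\,k\,(n/k-1/\alpha)^2\ge c'\,n\,(k/n-\alpha)^2 .
\]

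\textbf{Step 2: the tilted local probability.} It remains to show $\Pp^{(\theta)}(T_k=n)\le C/\sqrt n$ uniformly for $|\theta|\le\theta_1$. The tilted sum is exactly centered and the tilted laws are uniformly controlled: their variances are bounded away from $0$ and $\infty$, their third moments are uniformly bounded (from the exponential moment), and their characteristic functions satisfy a uniform gap $\sup_{\varepsilon'\le|u|\le\pi}|\phi_\theta(u)|\le1-\eta$. The gap follows from aperiodicity, since $\Pp(J=j)>0$ for every $j\ge1$, together with continuity in $\theta$ on a compact set. A direct uniform version of the characteristic-function proof of Lemma~\ref{lem:lclt} then gives the bound: write
\[
\Pp^{(\theta)}(T_k=n)=\frac1{2\pi}\int_{-\pi}^{\pi}\phi_\theta(u)^k e^{-iun}\,du,
\]
bound $|\phi_\theta(u)|\le e^{-cu^2}$ for $|u|\le\varepsilon'$ and $|\phi_\theta(u)|\le 1-\eta$ elsewhere. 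This yields $\le C/\sqrt k\le C'/\sqrt n$. I expect this uniformity in the tilt to be the main (though mild) obstacle. The point is that the gap and variance bounds must be taken over the compact tilt interval rather than at a single law, which is why I choose $\theta_1<\theta_0$ strictly.

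\textbf{Step 3: the range of $k$ far from $\alpha n$.} For $\delta<|k/n-\alpha|$ with $\varepsilon\le k/n\le1-\varepsilon$, it suffices to show $\Pp(T_k=n)\le e^{-c''n}$, because $(k/n-\alpha)^2\le1$ lets the target bound absorb this after shrinking $c$. Here I use a Chernoff bound with a fixed tilt $\pm\theta_1$ instead of the exact tilt:
\[
\Pp(T_k=n)\le \exp\{k\Lambda(\theta_1)-\theta_1 n\}\qquad\text{when } n/k\ge1/\alpha+\delta',
\]
and symmetrically with $-\theta_1$ when $n/k\le1/\alpha-\delta'$. Taking $\theta_1$ small relative to $\delta'$, the convexity estimate $\Lambda(\theta)\le\theta/\alpha+C\theta^2$ makes the exponent at most $-c\,k\,\theta_1\delta'\le -c''n$, using $k\ge\varepsilon n$. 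Combining the two ranges proves \eqref{eq:offcenter-local}, with constants depending only on $\varepsilon$ and the law of $J$ through $\theta_0$, the variance bounds, and the gap $\eta$.
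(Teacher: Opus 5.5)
Your proposal is correct and follows the paper's proof essentially step for step. Near the center you tilt exactly to mean $n/k$ and use the quadratic lower bound on the Legendre transform at $1/\alpha$. You bound the tilted point probability by $O(k^{-1/2})$ uniformly over a compact range of tilts. Away from the center you use a fixed-tilt Chernoff bound $e^{-c''n}$, which the Gaussian factor absorbs.

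Your Fourier-inversion argument for the uniform local bound spells out what the paper asserts in one line, via uniform variance, third-moment and characteristic-function-gap bounds over the tilt interval. Two points are cosmetic only. The fixed Chernoff tilt in Step 3 need not be the same $\theta_1$ as in Step 1; the paper uses a separate small $t_\pm$. The bound $\Lambda(\theta)\le\theta/\alpha+C\theta^2$ comes from Taylor expansion, not convexity.
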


\begin{proof}
Put $\mu=\E J$, $m=n/k$, and
\[
 \psi(t)=\log\E e^{tJ}.
\]
We first consider $m$ in a fixed small neighborhood of $\mu$.  Since
$\psi'(0)=\mu$ and $\psi''(0)=\Var(J)>0$, there is a unique small
$t=t(m)$ such that $\psi'(t)=m$.  Under the exponentially tilted law
\[
 \frac{\dd\Pp_t}{\dd\Pp}(J)=e^{tJ-\psi(t)},
\]
the sum $T_k$ has mean $km=n$.  Therefore
\[
 \Pp(T_k=n)
 =e^{-kI(m)}\Pp_t(T_k=n),
 \qquad
 I(m)=tm-\psi(t).
\]
For $m$ in this neighborhood, the tilted laws form a compact family with a
uniform exponential moment and variance bounded away from zero.  The local
central limit estimate of Lemma~\ref{lem:lclt} is therefore uniform and gives
\[
 \Pp_t(T_k=n)\le Ck^{-1/2}.
\]
Moreover $I(\mu)=I'(\mu)=0$ and $I''(\mu)=1/\Var(J)>0$, so
$I(m)\ge c(m-\mu)^2$.  Since $k\asymp n$,
\[
 \Pp(T_k=n)
 \le \frac{C}{\sqrt n}
 \exp\!\left\{-c\frac{(n-k\mu)^2}{n}\right\}.
\]

Choose $\delta>0$ small enough that the preceding argument applies whenever
$|m-\mu|\le\delta$.  It remains to treat $|m-\mu|>\delta$.  Since
$k/n\in[\varepsilon,1-\varepsilon]$, the quantity $m=n/k$ ranges over a
fixed compact interval.  For $m\ge\mu+\delta$, choose a fixed sufficiently
small $t_+>0$.  Since $\psi(t)=\mu t+O(t^2)$ as $t\to0$,
\[
 t_+m-\psi(t_+)\ge c_\delta>0.
\]
Likewise, for $m\le\mu-\delta$, a fixed sufficiently small $t_-<0$ gives
$t_-m-\psi(t_-)\ge c_\delta$.  Hence in either case
\[
 \Pp(T_k=n)
 \le \E e^{t_\pm(T_k-n)}
 =\exp\{-k(t_\pm m-\psi(t_\pm))\}
 \le e^{-c_\delta n}.
\]
Since $k/n\in[\varepsilon,1-\varepsilon]$, there is a constant
$B_\varepsilon<\infty$ such that
\[
 \frac{(n-k\mu)^2}{n}\le B_\varepsilon n.
\]
Choose $c_1>0$ so that $c_1B_\varepsilon<c_\delta/2$.  Then, for all
large $n$,
\[
 e^{-c_\delta n}
 \le \frac1{\sqrt n}e^{-c_\delta n/2}
 \le \frac1{\sqrt n}
 \exp\left\{-c_1\frac{(n-k\mu)^2}{n}\right\}.
\]
Thus the same bound obtained near the mean holds throughout the whole
interval $k/n\in[\varepsilon,1-\varepsilon]$.  Finally,
$n-k\mu=-\mu n(k/n-\alpha)$, which gives \eqref{eq:offcenter-local}.
\end{proof}

\subsection{Relative entropy and finite marginals}

For discrete probability laws, write $D(\mu\|\pi)=\sum_z\mu(z)\log(\mu(z)/\pi(z))$, using natural logarithms.  The next two estimates are classical consequences of the chain rule for relative entropy, Pinsker's inequality, and Hoeffding's exponential-moment bound; see Cover and Thomas~\cite[Chapters~2 and~11]{CoverThomas} and Hoeffding~\cite{Hoeffding1963}.  We record for completeness the precise version used below.

\begin{lemma}
\label{lem:entropy-comparison}
Let $\mu$ be an exchangeable law with finite support on $\Nplus^k$, and let $q$ be a probability law with $q(j)>0$ for every $j\ge1$.  Write $\mu^{[r]}$ for the law of the first $r$ coordinates.
\begin{enumerate}[label=(\roman*)]
\item For $1\le r\le k$,
\begin{equation}
 \dTV\!\left(\mu^{[r]},q^{\otimes r}\right)^2
 \le \frac12 D\!\left(\mu^{[r]}\middle\|q^{\otimes r}\right)
 \le \frac{r}{2k}D\!\left(\mu\middle\|q^{\otimes k}\right).
 \label{eq:entropy-marginal}
\end{equation}
\item If $f$ takes values in an interval of length $L$, then
\begin{equation}
 \E_\mu\left|\frac1k\sum_{i=1}^k f(J_i)-\E_q f(J)\right|
 \le L\left(\frac{D(\mu\|q^{\otimes k})+\log2}{2k}\right)^{1/2}.
 \label{eq:entropy-average}
\end{equation}
\end{enumerate}
\end{lemma}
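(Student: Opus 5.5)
Part (i) rests on two facts: the chain rule for relative entropy together with exchangeability (which gives superadditivity), and Pinsker's inequality. Part (ii) follows from the Donsker--Varadhan variational formula combined with Hoeffding's lemma.

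\textbf{Part (i).} The first inequality in \eqref{eq:entropy-marginal} is Pinsker's inequality, $\dTV^2\le\frac12 D$, applied to $\mu^{[r]}$ and $q^{\otimes r}$.

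For the second inequality, write $D_r=D(\mu^{[r]}\|q^{\otimes r})$. Because the reference measure is a product, the chain rule gives
\[
D_k=\sum_{i=1}^k \E_\mu\, D\bigl(\mu(J_i\in\cdot\mid J_1,\dots,J_{i-1})\,\big\|\,q\bigr).
\]
Denote the $i$-th summand by $d_i$, so that $D_r=d_1+\cdots+d_r$. By exchangeability, $d_i$ equals the conditional divergence of the law of $J_1$ given $i-1$ other coordinates. Conditioning on more information can only increase this expected conditional divergence. To see this, write it as a mutual-information-type term plus a cross-entropy term: we have $\E D(\mu(\cdot\mid \mathcal G)\|q)=D(\mu_{J_1}\|q)+I(J_1;\mathcal G)$, and mutual information is monotone in $\mathcal G$. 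Hence $d_i$ is nondecreasing in $i$.

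A nondecreasing sequence has averages over initial segments that are nondecreasing, so $D_r/r\le D_k/k$. This is exactly $D_r\le\frac rk D_k$. Finite support of $\mu$ ensures that all of these quantities are finite, and $q>0$ everywhere ensures absolute continuity.

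\textbf{Part (ii).} Set $Y=\frac1k\sum_i f(J_i)-\E_qf(J)$. Under $q^{\otimes k}$, $Y$ is an average of $k$ independent centered variables, each with range $L$. Hoeffding's lemma therefore gives
\[
\E_{q^{\otimes k}}e^{\lambda |Y|}\le 2e^{\lambda^2L^2/(8k)},
\]
where the factor $2$ handles both signs through $e^{\lambda|y|}\le e^{\lambda y}+e^{-\lambda y}$.

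The Donsker--Varadhan inequality $\E_\mu G\le D(\mu\|\pi)+\log\E_\pi e^{G}$, applied with $G=\lambda|Y|$, then gives
\[
\lambda\E_\mu|Y|\le D+\log2+\frac{\lambda^2L^2}{8k},\qquad D=D(\mu\|q^{\otimes k}).
\]
Dividing by $\lambda$ and optimizing with $\lambda=\sqrt{8k(D+\log 2)}/L$ yields
\[
\E_\mu|Y|\le L\sqrt{\frac{D+\log2}{2k}},
\]
which is \eqref{eq:entropy-average}.

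\textbf{Main obstacle.} The only step requiring care is the monotonicity of $d_i$ in part (i). The clean route is the decomposition
\[
d_i=D(\mu^{[1]}\|q)+I\bigl(J_i;(J_1,\dots,J_{i-1})\bigr),
\]
combined with the observation that, by exchangeability, $I(J_i;J_1,\dots,J_{i-1})=I(J_1;J_2,\dots,J_i)$. This mutual information is increasing in $i$ by the chain rule for mutual information.
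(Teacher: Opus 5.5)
Your proof is correct and follows the paper's argument. For (i), both use the chain rule with nondecreasing conditional divergences $d_i$ (via exchangeability), averaging, and Pinsker; for (ii), both use Hoeffding together with the Gibbs/Donsker--Varadhan inequality, which is exactly the paper's tilted measure $Q_t$ with the same optimization in $t$. The only difference is cosmetic: you obtain $d_{i+1}\ge d_i$ from $d_i=D(\mu^{[1]}\|q)+I(J_i;J_1,\dots,J_{i-1})$ and monotonicity of mutual information, whereas the paper applies convexity of relative entropy to the mixture over $J_i$, which is the same fact in different language.
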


\begin{proof}
For (i), let $\mathcal F_i=\sigma(J_1,\ldots,J_i)$, with $\mathcal F_0$ trivial, and define
\[
 d_i=\E_\mu D\!\left(\Law_\mu(J_i\mid\mathcal F_{i-1})\middle\|q\right).
\]
These are exactly the successive contributions in the chain rule for relative entropy: for every $1\le r\le k$,
\[
 D\!\left(\mu^{[r]}\middle\|q^{\otimes r}\right)
 =\sum_{i=1}^r d_i,
 \qquad
 D\!\left(\mu\middle\|q^{\otimes k}\right)
 =\sum_{i=1}^k d_i.
\]
Thus it remains to compare the average of the first $r$ contributions with the average of all $k$ contributions.  We claim that $d_i$ is nondecreasing.  Indeed, conditionally on $\mathcal F_{i-1}$, the law of $J_{i+1}$ is the mixture, over $J_i$, of the laws conditioned on $\mathcal F_i$.  Convexity of relative entropy therefore gives
\[
 \E_\mu D\!\left(\Law_\mu(J_{i+1}\mid\mathcal F_i)\middle\|q\right)
 \ge
 \E_\mu D\!\left(\Law_\mu(J_{i+1}\mid\mathcal F_{i-1})\middle\|q\right).
\]
By exchangeability, conditionally on $\mathcal F_{i-1}$ the variables $J_i$ and $J_{i+1}$ have the same law, so the right-hand side is $d_i$.  Hence $d_{i+1}\ge d_i$.

It follows that
\[
 \frac1rD\!\left(\mu^{[r]}\middle\|q^{\otimes r}\right)
 =\frac1r\sum_{i=1}^r d_i
 \le\frac1k\sum_{i=1}^k d_i
 =\frac1kD\!\left(\mu\middle\|q^{\otimes k}\right).
\]
Combining this with Pinsker's inequality~\cite[Chapter~11]{CoverThomas} gives
\[
 \dTV\!\left(\mu^{[r]},q^{\otimes r}\right)^2
 \le\frac12D\!\left(\mu^{[r]}\middle\|q^{\otimes r}\right)
 \le\frac{r}{2k}D\!\left(\mu\middle\|q^{\otimes k}\right).
\]

For (ii), put $Q=q^{\otimes k}$ and $X=k^{-1}\sum_i f(J_i)-\E_qf(J)$.
Hoeffding's bound~\cite{Hoeffding1963} gives, for $t\in\mathbb R$,
\[
 \E_Qe^{tX}
 =\left(\E_q e^{(t/k)(f(J)-\E_qf(J))}\right)^k
 \le e^{t^2L^2/(8k)}.
\]
Hence, for $t>0$,
\[
 \E_Qe^{t|X|}
 \le\E_Qe^{tX}+\E_Qe^{-tX}
 \le2e^{t^2L^2/(8k)}.
\]
With $Q_t=e^{t|X|}Q/\E_Qe^{t|X|}$,
\[
 0\le D(\mu\|Q_t)
 =D(\mu\|Q)-t\E_\mu|X|+\log\E_Qe^{t|X|}.
\]
Consequently,
\[
 \E_\mu|X|
 \le\inf_{t>0}\left\{
 \frac{D(\mu\|Q)+\log2}{t}+\frac{tL^2}{8k}
 \right\}
 =L\left(\frac{D(\mu\|Q)+\log2}{2k}\right)^{1/2}.
 \qedhere
\]
\end{proof}

\section{Proofs of the main results}
\label{sec:proofs}

\subsection{Proof of the one-statistic theorem}

\begin{proof}[Proof of \Cref{thm:one-stat}]
At $x=x_*$, $A(x_*)=1$, so
\[
 W^{(a)}_{n,k}=x_*^{-n}\Pp_*(T_k=n).
\]
We first show that the optimizer lies in a fixed interior range, so that Lemma~\ref{lem:offcenter} applies.  Let $\mu=\E_*J$ and $\alpha_*=1/\mu$, and choose $k_n^0=\alpha_*n+O(1)$.  Lemma~\ref{lem:lclt} gives
\[
 x_*^nW^{(a)}_{n,k_n^0}
 =\Pp_*(T_{k_n^0}=n)\ge c n^{-1/2}.
\]
Choose $\varepsilon>0$ with $\alpha_*\in(2\varepsilon,1-2\varepsilon)$.  If $k/n\notin[\varepsilon,1-\varepsilon]$, then $|n-k\mu|\ge c_\varepsilon n$.  Since $q_{x_*}$ has an exponential moment, a Chernoff bound gives
\[
 x_*^nW^{(a)}_{n,k}=\Pp_*(T_k=n)\le e^{-c_\varepsilon n},
\]
so such $k$ cannot maximize for large $n$.

We may therefore apply Lemma~\ref{lem:offcenter} to the maximizing $k_n$:
\[
 \frac{c}{\sqrt n}
 \le \Pp_*(T_{k_n}=n)
 \le \frac{C}{\sqrt n}
 \exp\!\left\{-cn\left(\frac{k_n}{n}-\alpha_*\right)^2\right\}.
\]
Taking logarithms gives
\[
 \left|\frac{k_n}{n}-\alpha_*\right|=O(n^{-1/2}),
\]
which proves the theorem.
\end{proof}

\subsection{Canonical and critical regimes}

\begin{proof}[Proof of \Cref{thm:two-canonical}]
Fix $\gamma\le\gamma_c$ with $\rho=\rho_\gamma>0$.  We first show that the maximizing support lies in a fixed interior range, so that Lemma~\ref{lem:offcenter} can be used.  Let $k_n^0=\alpha_\gamma n+O(1)$.  Since $\E_\gamma J=1/\alpha_\gamma$ and $\E_\gamma v(J)=\rho/\alpha_\gamma$,
\[
 |n-k_n^0\E_\gamma J|+|\rho n-k_n^0\E_\gamma v(J)|=O(1).
\]
Thus the two target values are within bounded distance of their means under $q_\gamma^{\otimes k_n^0}$, and Lemma~\ref{lem:two-lclt} gives
\[
 x_\gamma^n e^{\gamma\rho n}W^{(a,v)}_{n,k_n^0}(\rho)
 \ge c_\gamma n^{-1}.
\]
Choose $\varepsilon>0$ with $\alpha_\gamma\in(2\varepsilon,1-2\varepsilon)$.  For every $k$,
\[
 x_\gamma^n e^{\gamma\rho n}W^{(a,v)}_{n,k}(\rho)
 \le e^{|\gamma|}\Pp_\gamma(T_k=n).
\]
If $k/n\notin[\varepsilon,1-\varepsilon]$, then $|n-k\E_\gamma J|\ge c_\varepsilon n$, and the exponential moment of $q_\gamma$ gives
\[
 x_\gamma^n e^{\gamma\rho n}W^{(a,v)}_{n,k}(\rho)
 \le C_\gamma e^{-c_\varepsilon n}.
\]
Hence every maximizer lies in $[\varepsilon n,(1-\varepsilon)n]$ for all large $n$.

We now apply Lemma~\ref{lem:offcenter} in this interior range.  Since $\E_\gamma J=1/\alpha_\gamma$,
\[
 x_\gamma^n e^{\gamma\rho n}W^{(a,v)}_{n,k}(\rho)
 \le \frac{C_\gamma}{\sqrt n}
 \exp\!\left\{-cn\left(\frac{k}{n}-\alpha_\gamma\right)^2\right\}.
\]
Comparing with the centered lower bound $c_\gamma n^{-1}$ shows first that
\[
 \left|\frac{k_n(\rho)}n-\alpha_\gamma\right|
 =O_\gamma\!\left(\sqrt{\frac{\log n}{n}}\right).
\]
Thus the optimizer is eventually in an arbitrarily small fixed neighborhood of $\alpha_\gamma$.

For (i), assume $\gamma<\gamma_c$.  In that neighborhood, for a competing support $k$ with $k/n=\alpha$, the inverse function theorem applied to
\[
 (\log x,\beta)\longmapsto
 \bigl(\E_{x,\beta}J,\E_{x,\beta}v(J)\bigr)
\]
gives unique nearby parameters $(x,\beta)$ satisfying
$\E_{x,\beta}J=n/k$ and $\E_{x,\beta}v(J)=\rho n/k$.  Thus, for each nearby $k$, we use the canonical law whose two means match that particular support.  The covariance matrix is the derivative of the mean map and is nonsingular by Lemma~\ref{lem:exp-family}.  Lemma~\ref{lem:two-lclt} then gives, uniformly there,
\[
 \log W^{(a,v)}_{n,k}(\rho)
 =nF_\rho(\alpha)-\log n+O_\gamma(1).
\]

At $\alpha=\alpha_\gamma$, the matched parameters are exactly $(x_\gamma,\gamma)$, since $q_\gamma$ matches the two prescribed means and $Z_\gamma(x_\gamma)=1$.  Hence Lemma~\ref{lem:exp-family} gives $F_\rho'(\alpha_\gamma)=0$, and strict concavity yields
\[
 F_\rho(\alpha_\gamma)-F_\rho(\alpha)
 \ge c_\gamma(\alpha-\alpha_\gamma)^2
\]
near $\alpha_\gamma$.

Let $\alpha_n=k_n(\rho)/n$ and $\alpha_n^0=k_n^0/n=\alpha_\gamma+O(n^{-1})$.  Since $k_n(\rho)$ maximizes the microcanonical weight,
\[
 W^{(a,v)}_{n,k_n(\rho)}(\rho)
 \ge W^{(a,v)}_{n,k_n^0}(\rho).
\]
Applying the preceding canonical asymptotic to both supports gives
\[
 nF_\rho(\alpha_n)-\log n+O_\gamma(1)
 \ge nF_\rho(\alpha_n^0)-\log n+O_\gamma(1).
\]
Since $F_\rho'(\alpha_\gamma)=0$ and $\alpha_n^0-\alpha_\gamma=O(n^{-1})$,
$F_\rho(\alpha_n^0)=F_\rho(\alpha_\gamma)+O_\gamma(n^{-2})$.  The preceding comparison therefore gives
\[
 n\bigl(F_\rho(\alpha_\gamma)-F_\rho(\alpha_n)\bigr)=O_\gamma(1).
\]
Combining this with strict concavity gives
\[
 n\left|\frac{k_n(\rho)}n-\alpha_\gamma\right|^2=O_\gamma(1),
\]
which proves (i).

For (ii), take $\gamma=\gamma_c$ and $\rho=\rho_c$.  The preceding interior estimate already gives
\[
 n\left|\frac{k_n(\rho_c)}n-\alpha_c\right|^2=O(\log n),
\]
which is exactly (ii).

For (iii), let $k=k_n(\rho)$ and $\mu=\mu^\rho_{n,k}$.  Parts (i)--(ii) give $k\asymp n$.  On the constraint set $T_k=n$, $|V_k-\rho n|\le1$, the microcanonical and canonical densities are
\[
 \mu(\boldsymbol j)=\frac{\prod_i a_{j_i}}{W^{(a,v)}_{n,k}(\rho)},
 \qquad
 \prod_{i=1}^kq_\gamma(j_i)
 =\prod_i a_{j_i}\,x_\gamma^n e^{\gamma V_k(\boldsymbol j)},
\]
because $Z_\gamma(x_\gamma)=1$.  Hence
\begin{equation}
 \log\frac{\mu(\boldsymbol j)}{\prod_{i=1}^k q_\gamma(j_i)}
 =-\log\!\left(x_\gamma^n e^{\gamma\rho n}W^{(a,v)}_{n,k}(\rho)\right)
   -\gamma\bigl(V_k(\boldsymbol j)-\rho n\bigr).
 \label{eq:profile-likelihood}
\end{equation}
By optimality, the normalized partition function at $k$ is at least its value at the centered support $k_n^0$, and Lemma~\ref{lem:two-lclt} gives
\[
 x_\gamma^n e^{\gamma\rho n}W^{(a,v)}_{n,k}(\rho)
 \ge c_\gamma n^{-1}.
\]
Taking expectation in \eqref{eq:profile-likelihood} therefore gives
\[
 \begin{aligned}
 D\!\left(\mu\middle\|q_\gamma^{\otimes k}\right)
 &\le -\log(c_\gamma n^{-1})+|\gamma|\\
 &=\log n+O_\gamma(1).
 \end{aligned}
\]
Thus the $\log n$ comes exactly from the $n^{-1}$ local-limit probability for the two centered constraints.  Lemma~\ref{lem:entropy-comparison}(i) now yields, for every fixed $r$,
\[
 \dTV\!\left(\mu^{\rho,[r]}_{n,k},q_\gamma^{\otimes r}\right)^2
 \le\frac{r}{2k}(\log n+C_\gamma)
 =O_{\gamma,r}\!\left(\frac{\log n}{n}\right).
\]
This proves (iii).
\end{proof}

\subsection{Supercritical regime}

\begin{proof}[Proof of \Cref{thm:two-supercritical}]
We begin by rewriting the quantity being optimized.  Set
\[
 \mathcal W_{n,k}(\rho)
 :=x_c^n e^{\gamma_c\rho n}W^{(a,v)}_{n,k}(\rho).
\]
Since $x_c^n e^{\gamma_c\rho n}$ is independent of $k$,
\[
 k_n(\rho)\in\argmaxx_{1\le k\le n} W^{(a,v)}_{n,k}(\rho)
 \quad\Longleftrightarrow\quad
 k_n(\rho)\in\argmaxx_{1\le k\le n} \mathcal W_{n,k}(\rho).
\]
Thus all support comparisons may be made with $\mathcal W_{n,k}(\rho)$.

For each fixed $k$, the boundary law $q_c^{\otimes k}$ gives the exact identity
\[
 \mathcal W_{n,k}(\rho)
 =\E_c\!\left[e^{-\gamma_c(V_k-\rho n)}
   \mathbf1_{\{T_k=n,\ |V_k-\rho n|\le1\}}\right].
\]
On the event $|V_k-\rho n|\le1$, the exponential factor is at most $e^{|\gamma_c|}$.  Hence, for every $1\le k\le n$,
\begin{equation}
 0\le \mathcal W_{n,k}(\rho)
 \le e^{|\gamma_c|}\Pp_c(T_k=n).
 \label{eq:supercritical-upper-all-k}
\end{equation}

We next construct one good support.  The idea is to keep almost all coordinates in a bulk close to the critical law $q_c$ and use one exceptional coordinate to carry the excess profile.  A coordinate of size $s$ contributes $s$ to the length and $v(s)$ to the profile.  Relative to a critical bulk carrying profile at rate $\rho_c$ per unit length, its excess contribution is therefore
$v(s)-\rho_cs$.  We choose $s_n$ to be the smallest integer for which
\[
 v(s_n)-\rho_cs_n\ge(\rho-\rho_c)n.
\]
Thus one coordinate of size $s_n$ can account for the entire supercritical excess $(\rho-\rho_c)n$.

After removing this exceptional coordinate, the remaining coordinates must have total length
$N_n=n-s_n$ and total profile $S_n=\rho n-v(s_n)$.  If
\[
 \xi_n=v(s_n)-\rho_cs_n-(\rho-\rho_c)n
\]
denotes the small overshoot in the exceptional coordinate, then
\[
 S_n=\rho_cN_n-\xi_n,
 \qquad
 \frac{S_n}{N_n}=\rho_c-\frac{\xi_n}{N_n}.
\]
The remaining bulk therefore lies just below the critical profile rate.  We choose
$\gamma_n\le\gamma_c$ so that $\rho_{\gamma_n}=S_n/N_n$ and take
$\ell_n=\alpha_{\gamma_n}N_n+O(1)$ to be an integer.  Write $\lambda_\gamma=\log x_\gamma$, so in particular $\lambda_c=\log x_c$.
Restricting the partition sum for $W^{(a,v)}_{n,\ell_n+1}(\rho)$ to configurations whose last coordinate is $s_n$ gives
\begin{equation}
 \begin{aligned}
 \mathcal W_{n,\ell_n+1}(\rho)
 &\ge q_c(s_n)
 \exp\!\left\{N_n(\lambda_c-\lambda_{\gamma_n})
       +(\gamma_c-\gamma_n)S_n\right\}\\
 &\qquad\times
 \E_{\gamma_n}\!\left[
 e^{-\gamma_n(V_{\ell_n}-S_n)}
 \mathbf1_{\{T_{\ell_n}=N_n,\ |V_{\ell_n}-S_n|\le1\}}
 \right].
 \end{aligned}
 \label{eq:one-large-coordinate-bound}
\end{equation}

Under $q_{\gamma_n}^{\otimes\ell_n}$, the target values in
\eqref{eq:one-large-coordinate-bound} differ from their corresponding means by only $O(1)$.  Indeed, since
$\ell_n=\alpha_{\gamma_n}N_n+O(1)$,
\[
 |N_n-\ell_n\E_{\gamma_n}J|
 +|S_n-\ell_n\E_{\gamma_n}v(J)|=O(1),
\]
where the second term uses $\rho_{\gamma_n}=S_n/N_n$.
Also $\gamma_n\to\gamma_c$, because
$\rho_{\gamma_n}=\rho_c-\xi_n/N_n\to\rho_c$.
Lemma~\ref{lem:two-lclt} therefore gives
\[
 \Pp_{\gamma_n}\bigl(T_{\ell_n}=N_n,\ |V_{\ell_n}-S_n|\le1\bigr)
 \ge c_\rho n^{-1}.
\]
On the same event, $|V_{\ell_n}-S_n|\le1$; since $\gamma_n$ stays bounded,
$e^{-\gamma_n(V_{\ell_n}-S_n)}$ is bounded below by a positive constant.
Thus the expectation in \eqref{eq:one-large-coordinate-bound} is at least $c_\rho/n$.  Setting $k_n^0=\ell_n+1$, we obtain
\begin{equation}
 \mathcal W_{n,k_n^0}(\rho)
 \ge \frac{c_\rho}{n}q_c(s_n)e^{R_n},
 \qquad
 R_n=N_n(\lambda_c-\lambda_{\gamma_n})
       +(\gamma_c-\gamma_n)S_n.
 \label{eq:candidate-before-R}
\end{equation}

We now estimate the two factors in \eqref{eq:candidate-before-R}.
First consider $q_c(s_n)$.  Since $v(n)/n\to\infty$, the defining inequality for $s_n$ is satisfied at $s=n$ for all sufficiently large $n$, and hence $s_n\le n$.  By minimality,
$v(s_n-1)-\rho_c(s_n-1)<(\rho-\rho_c)n$, so $v(s_n-1)=O(n)$.
Since $v(j)/j\to\infty$, this forces $s_n=o(n)$.  Using
$\Delta v(j)/v(j)\to0$ then also gives $v(s_n)=O(n)$ and
\[
 0\le \xi_n
 \le \Delta v(s_n-1)+O(1)=o(n).
\]
Finally, because $q_c(j)=a_jx_c^je^{\gamma_cv(j)}$,
\[
 -\log q_c(s_n)
 =\bigl[-\log a_{s_n}-\gamma_cv(s_n)\bigr]-s_n\log x_c.
\]
The first bracket is $o(v(s_n))=o(n)$ by the definition of $\gamma_c$, while the second term is $o(n)$ because $s_n=o(n)$.  Therefore $-\log q_c(s_n)=o(n)$.

It remains to control $R_n$.  Since $S_n/N_n=\rho_{\gamma_n}$ and
$\dd\lambda_\gamma/\dd\gamma=-\rho_\gamma$,
\[
 R_n=-N_n\int_{\gamma_n}^{\gamma_c}
       (\rho_u-\rho_{\gamma_n})\,\dd u.
\]
Near $\gamma_c$, Lemma~\ref{lem:exp-family} gives positive constants $m,M$ such that
$m\le \rho_\gamma'\le M$.  Because
$\rho_c-\rho_{\gamma_n}=\xi_n/N_n$, we have
\[
 m(\gamma_c-\gamma_n)
 \le \frac{\xi_n}{N_n}
 \le M(\gamma_c-\gamma_n).
\]
Thus $\gamma_c-\gamma_n\le \xi_n/(mN_n)$.  Moreover,
$0\le \rho_u-\rho_{\gamma_n}\le M(u-\gamma_n)$ for
$u\in[\gamma_n,\gamma_c]$.  Substituting this bound into the integral representation of $R_n$ gives
\[
\begin{aligned}
 R_n
 &\ge -MN_n\int_{\gamma_n}^{\gamma_c}(u-\gamma_n)\,\dd u\\
 &=-\frac{M}{2}N_n(\gamma_c-\gamma_n)^2\\
 &\ge -\frac{M}{2m^2}\frac{\xi_n^2}{N_n}
 \ge -C_\rho\frac{\xi_n^2}{n},
\end{aligned}
\]
where the last inequality uses $N_n=n-s_n\sim n$.
Substituting the two estimates into \eqref{eq:candidate-before-R} gives
\begin{equation}
 \mathcal W_{n,k_n^0}(\rho)
 \ge \frac{c_\rho}{n}q_c(s_n)
      \exp\!\left\{-C_\rho\frac{\xi_n^2}{n}\right\}
 =e^{-o(n)}.
 \label{eq:supercritical-candidate}
\end{equation}

We now use the two bounds to remove supports near the endpoints.  Choose $\varepsilon>0$ with
$\alpha_c\in(2\varepsilon,1-2\varepsilon)$, and let
\[
 A_n^{\rm out}=\{k: k/n\notin[\varepsilon,1-\varepsilon]\}.
\]
For $k\in A_n^{\rm out}$, the target value $n$ is a linear distance from the mean $k\E_cJ$ under $q_c^{\otimes k}$:
\[
 |n-k\E_cJ|\ge c_\varepsilon n.
\]
The exponential-moment bound used in Lemma~\ref{lem:offcenter} therefore gives, uniformly over all $k\in A_n^{\rm out}$,
\[
 \Pp_c(T_k=n)\le C e^{-c_\varepsilon n}.
\]
Combining this with \eqref{eq:supercritical-upper-all-k},
\[
 \sup_{k\in A_n^{\rm out}}\mathcal W_{n,k}(\rho)
 \le C e^{-c_\varepsilon n}.
\]
On the other hand, the single support $k_n^0$ constructed above satisfies, by \eqref{eq:supercritical-candidate},
\[
 \mathcal W_{n,k_n^0}(\rho)
 \ge \frac{c_\rho}{n}q_c(s_n)e^{-C_\rho\xi_n^2/n}
 =e^{-o(n)}.
\]
Since $e^{-o(n)}$ is eventually much larger than $e^{-c_\varepsilon n}$, none of the supports in $A_n^{\rm out}$ can maximize.  Therefore
\[
 \varepsilon\le \frac{k_n(\rho)}n\le1-\varepsilon
\]
for all sufficiently large $n$.

It remains to compare the supports in the interior interval $[\varepsilon n,(1-\varepsilon)n]$.  For these $k$, Lemma~\ref{lem:offcenter} applies.  Since $\E_cJ=1/\alpha_c$, combining that lemma with \eqref{eq:supercritical-upper-all-k} gives, for every interior competitor,
\begin{equation}
 \mathcal W_{n,k}(\rho)
 \le \frac{C}{\sqrt n}
 \exp\!\left\{-cn\left(\frac{k}{n}-\alpha_c\right)^2\right\},
 \qquad \varepsilon\le k/n\le1-\varepsilon.
 \label{eq:supercritical-upper-interior}
\end{equation}
At the optimizer $k=k_n(\rho)$, comparison with the candidate in \eqref{eq:supercritical-candidate} gives
\begin{equation}
 \frac{c_\rho}{n}q_c(s_n)e^{-C_\rho\xi_n^2/n}
 \le \mathcal W_{n,k}(\rho)
 \le \frac{C}{\sqrt n}
 e^{-cn(k/n-\alpha_c)^2}.
 \label{eq:supercritical-two-sided}
\end{equation}
Taking logarithms yields
\[
 n\left|\frac{k_n(\rho)}n-\alpha_c\right|^2
 \le C_\rho\left(-\log q_c(s_n)+\frac{\xi_n^2}{n}+\log n\right).
\]
Set
\[
 \omega_n(\rho)
 :=\left(\frac{-\log q_c(s_n)+\log n}{n}\right)^{1/2}
   +\frac{\xi_n}{n}.
\]
Then $\omega_n(\rho)\to0$ and
\[
 \left|\frac{k_n(\rho)}n-\alpha_c\right|=O_\rho(\omega_n),
\]
which proves (i).

For (ii), let $k=k_n(\rho)$ and $\mu=\mu^\rho_{n,k}$.  We use the same change-of-measure identity as in the canonical case.  Formula~\eqref{eq:profile-likelihood} with $\gamma=\gamma_c$ says
\[
 D\!\left(\mu\middle\|q_c^{\otimes k}\right)
 =-\log \mathcal W_{n,k}(\rho)
   -\gamma_c\E_\mu[V_k-\rho n].
\]
Since $|V_k-\rho n|\le1$, the second term is bounded. The lower bound in
\eqref{eq:supercritical-two-sided} gives directly
\[
 \begin{aligned}
 -\log \mathcal W_{n,k}(\rho)
 &\le -\log q_c(s_n)+\frac{C_\rho\xi_n^2}{n}+\log n+C_\rho.
 \end{aligned}
\]
Hence
\begin{equation}
 D\!\left(\mu\middle\|q_c^{\otimes k}\right)
 \le C_\rho\left(-\log q_c(s_n)+\frac{\xi_n^2}{n}+\log n\right).
 \label{eq:supercritical-entropy}
\end{equation}
The $\log n$ term is the logarithmic cost of the $n^{-1}$ mixed local-limit factor in the lower bound for the comparison support.  Since $k\asymp n$, Lemma~\ref{lem:entropy-comparison}(i) now gives, for every fixed $r$,
\[
 \dTV\!\left(\mu^{\rho,[r]}_{n,k},q_c^{\otimes r}\right)
 =O_{\rho,r}(\omega_n),
\]
which proves (ii).

It remains to prove (iii).  For a fixed cutoff $M$, put
\[
 I_M=\{i:J_i>M\},\qquad
 p_M=q_c(J>M),\qquad
 u_M=\alpha_c\E_c[v(J)\mathbf1_{\{J>M\}}].
\]
For this fixed $M$, Lemma~\ref{lem:entropy-comparison}(ii) and
\eqref{eq:supercritical-entropy} give
\[
 \E_\mu\left|\frac{|I_M|}{k}-p_M\right|\le C_\rho\omega_n.
\]
Let $f_M(j)=v(j)\mathbf1_{\{j\le M\}}$, and let $L_M$ denote the length of the range of $f_M$.  Since
\eqref{eq:supercritical-entropy} is bounded by $C_\rho n\omega_n^2$ and $k\asymp n$, the same lemma gives
\[
 \E_\mu\left|\frac1k\sum_{i=1}^k f_M(J_i)-\E_cf_M(J)\right|
 \le C_\rho L_M\omega_n.
\]
Moreover,
\[
 \frac1n\sum_{i\in I_M}v(J_i)
 =\frac{V_k}{n}-\frac{k}{n}\,\frac1k\sum_{i=1}^k f_M(J_i).
\]
Since $|V_k-\rho n|\le1$, $|k/n-\alpha_c|\le C_\rho\omega_n$,
$\alpha_c\E_cf_M(J)=\rho_c-u_M$, and
$|\E_cf_M(J)|\le\E_c|v(J)|<\infty$ uniformly in $M$, we obtain
\begin{equation}
 \E_\mu\left|
   \frac1n\sum_{i\in I_M}v(J_i)-(\rho-\rho_c)
 \right|
 \le |u_M|+C_\rho(1+L_M)\omega_n+\frac1n.
 \label{eq:fixed-M-localization}
\end{equation}
Also $p_M\to0$ and $u_M\to0$ as $M\to\infty$.

Choose $M_\ell\uparrow\infty$ so that
\[
 p_{M_\ell}+|u_{M_\ell}|\le \frac1\ell.
\]
After $M_\ell$ is chosen, $L_{M_\ell}<\infty$.  Since $\omega_n\to0$, we may choose integers $n_\ell\uparrow\infty$ such that, for every $n\ge n_\ell$,
\[
 C_\rho\omega_n\le\frac1\ell,
 \qquad
 C_\rho(1+L_{M_\ell})\omega_n+\frac1n\le\frac1\ell.
\]
Define $M_n=M_\ell$ whenever $n_\ell\le n<n_{\ell+1}$, and let $I_n=\{i:J_i>M_n\}$.  Then on this block,
\[
 \E_\mu\frac{|I_n|}{k}\le\frac{2}{\ell},
 \qquad
 \E_\mu\left|
   \frac1n\sum_{i\in I_n}v(J_i)-(\rho-\rho_c)
 \right|\le\frac{2}{\ell}.
\]
Since $\ell\to\infty$ as $n\to\infty$, these quantities vanish and $M_n\to\infty$.  This proves (iii).
\end{proof}

\section*{Acknowledgments}
The author thanks Stefan Steinerberger, Shannon Starr, and Persi Diaconis for their helpful feedback.

\bigskip
\begin{flushleft}
\small
\textsc{Department of Mathematics}\\
University of Alabama at Birmingham\\
Birmingham, Alabama, USA\\
\href{mailto:ottolini@uab.edu}{\texttt{ottolini@uab.edu}}
\end{flushleft}


\small
\footnotesize
\begin{thebibliography}{99}
\setlength{\itemsep}{0pt}
\setlength{\parsep}{0pt}

\bibitem{AvivSmudge}
A.~J. Aviv, K.~Gibson, E.~Mossop, M.~Blaze, and J.~M. Smith.
Smudge attacks on smartphone touch screens.
In \emph{4th USENIX Workshop on Offensive Technologies (WOOT 10)}, 2010.

\bibitem{ArmendarizLoulakisHeavy}
I.~Armend\'ariz and M.~Loulakis.
Conditional distribution of heavy tailed random variables on large
deviations of their sum.
\emph{Stochastic Process. Appl.}, 121:1138--1147, 2011.

\bibitem{CarayolRotondo}
A.~Carayol and P.~Rotondo.
Efficient uniform sampling of surjections via their profiles.
\emph{arXiv:2605.24068}, 2026.

\bibitem{ChatterjeeSoliton}
S.~Chatterjee.
Invariant measures and the soliton resolution conjecture.
\emph{Comm. Pure Appl. Math.}, 67:1737--1842, 2014.

\bibitem{Chatterjee}
S.~Chatterjee.
A note about the uniform distribution on the intersection of a simplex and
a sphere.
\emph{J. Topol. Anal.}, 9:717--738, 2017.

\bibitem{ChakrabortyOng}
S.~Chakraborty and S.~H.~Ong.
A COM--Poisson-type generalization of the negative binomial distribution.
\emph{Commun. Statist. Theory Methods}, 45:4117--4135, 2016.

\bibitem{ConwayMaxwell}
R.~W. Conway and W.~L. Maxwell.
A queuing model with state dependent service rates.
\emph{Journal of Industrial Engineering}, 12:132--136, 1962.

\bibitem{CoverThomas}
T.~M. Cover and J.~A. Thomas.
\emph{Elements of Information Theory}.
Wiley, Hoboken, NJ, 2nd edition, 2006.

\bibitem{Csiszar1984}
I.~Csisz\'ar.
Sanov property, generalized $I$-projection and a conditional limit theorem.
\emph{Ann. Probab.}, 12:768--793, 1984.

\bibitem{DiaconisFreedmanDozen}
P.~Diaconis and D.~Freedman.
A dozen de Finetti-style results in search of a theory.
\emph{Ann. Inst. H. Poincar\'e Probab. Statist.}, 23(S2):397--423, 1987.

\bibitem{DiaconisFreedmanExp}
P.~Diaconis and D.~A. Freedman.
Conditional limit theorems for exponential families and finite versions of
de Finetti's theorem.
\emph{J. Theoret. Probab.}, 1:381--410, 1988.

\bibitem{GrosskinskySchutzSpohn}
S.~Gro\ss kinsky, G.~M. Sch\"utz, and H.~Spohn.
Condensation in the zero range process: stationary and dynamical properties.
\emph{J. Stat. Phys.}, 113:389--410, 2003.

\bibitem{JansonAllocations}
S.~Janson.
Simply generated trees, conditioned Galton--Watson trees, random allocations
and condensation.
\emph{Probab. Surv.}, 9:103--252, 2012.

\bibitem{KolchinEtAl}
V.~F. Kolchin, B.~A. Sevast'yanov, and P.~V. Chistyakov.
\emph{Random Allocations}.
V.~H. Winston \& Sons, Washington, DC, 1978.

\bibitem{LewisPfisterSullivan1994}
J.~T. Lewis, C.-E. Pfister, and W.~G. Sullivan.
The equivalence of ensembles for lattice systems: some examples and a counterexample.
\emph{J. Stat. Phys.}, 77:397--419, 1994.

\bibitem{LewisPfisterSullivan}
J.~T. Lewis, C.-E. Pfister, and W.~G. Sullivan.
Entropy, concentration of probability and conditional limit theorems.
\emph{Markov Process. Related Fields}, 1:319--386, 1995.

\bibitem{Mezo}
I.~Mez\H{o}.
Asymptotics of the modes of the ordered Stirling numbers.
\emph{arXiv:1504.06970}, 2015.

\bibitem{Nam2020}
K.~Nam.
Large deviations and localization of the microcanonical ensembles given by multiple constraints.
\emph{Ann. Probab.}, 48:2525--2564, 2020.

\bibitem{Petrov1975}
V.~V. Petrov.
\emph{Sums of Independent Random Variables}.
Springer, Berlin, 1975.

\bibitem{VanCampenhoutCover}
J.~M. Van Campenhout and T.~M. Cover.
Maximum entropy and conditional probability.
\emph{IEEE Trans. Inform. Theory}, 27:483--489, 1981.

\bibitem{SzavitsPRL}
J.~Szavits-Nossan, M.~R. Evans, and S.~N. Majumdar.
Constraint-driven condensation in large fluctuations of linear statistics.
\emph{Phys. Rev. Lett.}, 112:020602, 2014.

\bibitem{SzavitsJPA}
J.~Szavits-Nossan, M.~R. Evans, and S.~N. Majumdar.
Condensation transition in joint large deviations of linear statistics.
\emph{J. Phys. A}, 47:455004, 2014.

\bibitem{Touchette2015}
H.~Touchette.
Equivalence and nonequivalence of ensembles: thermodynamic, macrostate, and measure levels.
\emph{J. Stat. Phys.}, 159:987--1016, 2015.

\bibitem{PitmanCSP}
J.~Pitman.
\emph{Combinatorial Stochastic Processes}.
Lecture Notes in Mathematics 1875. Springer, Berlin, 2006.

\bibitem{BerestyckiPitman}
N.~Berestycki and J.~Pitman.
Gibbs distributions for random partitions generated by a fragmentation process.
\emph{J. Stat. Phys.}, 127:381--418, 2007.

\bibitem{BenderEnumeration}
E.~A. Bender.
Asymptotic methods in enumeration.
\emph{SIAM Rev.}, 16:485--515, 1974.

\bibitem{BorgaDasMukherjeeWinkler}
J.~Borga, S.~Das, S.~Mukherjee, and P.~Winkler.
Large deviation principle for random permutations.
\emph{Int. Math. Res. Not. IMRN}, 2024:2138--2191, 2024.

\bibitem{DenHollanderDense}
F.~den Hollander, M.~Mandjes, A.~Roccaverde, and N.~J. Starreveld.
Ensemble equivalence for dense graphs.
\emph{Electron. J. Probab.}, 23:paper no.~12, 26 pp., 2018.

\bibitem{KenyonKralRadinWinkler}
R.~Kenyon, D.~Kr\'al', C.~Radin, and P.~Winkler.
Permutations with fixed pattern densities.
\emph{Random Structures Algorithms}, 56:220--250, 2020.

\bibitem{MukherjeePermExp}
S.~Mukherjee.
Estimation in exponential families on permutations.
\emph{Ann. Statist.}, 44:853--875, 2016.

\bibitem{PemantleWilsonMelczer}
R.~Pemantle, M.~C. Wilson, and S.~Melczer.
\emph{Analytic Combinatorics in Several Variables}.
Cambridge University Press, 2nd edition, 2024.

\bibitem{SquartiniEtAl}
T.~Squartini, J.~de Mol, F.~den Hollander, and D.~Garlaschelli.
Breaking of ensemble equivalence in networks.
\emph{Phys. Rev. Lett.}, 115:268701, 2015.

\bibitem{Temme}
N.~M. Temme.
Asymptotic estimates of Stirling numbers.
\emph{Stud. Appl. Math.}, 89:233--243, 1993.

\bibitem{Stone1967}
C.~Stone.
On local and ratio limit theorems.
In \emph{Proceedings of the Fifth Berkeley Symposium on Mathematical Statistics and Probability}, Vol.~II, Part~II, pages 217--224. University of California Press, 1967.

\bibitem{EckGeyer}
D.~J. Eck and C.~J. Geyer.
Computationally efficient likelihood inference in exponential families when the maximum likelihood estimator does not exist.
\emph{arXiv:1803.11240}, 2020.

\bibitem{Hoeffding1963}
W.~Hoeffding.
Probability inequalities for sums of bounded random variables.
\emph{J. Amer. Statist. Assoc.}, 58:13--30, 1963.

\bibitem{BialasBurdaJohnston}
P.~Bia\l as, Z.~Burda, and D.~Johnston.
Phase diagram of the mean field model of simplicial gravity.
\emph{Nucl. Phys. B}, 542:413--424, 1999.

\bibitem{ZamparoRenewal}
M.~Zamparo.
Large deviations in discrete-time renewal theory.
\emph{Stochastic Process. Appl.}, 139:80--109, 2021.

\bibitem{ZamparoModels}
M.~Zamparo.
Large deviations in renewal models of statistical mechanics.
\emph{J. Phys. A: Math. Theor.}, 52:495004, 2019.

\end{thebibliography}
\end{document}